\documentclass[10pt,reqno,final]{amsart}
\usepackage{epsfig,amssymb,amsmath,mathtools,version}
\usepackage{amssymb,version,graphicx,fancybox,mathrsfs}
\usepackage{hyperref}
\hypersetup{colorlinks=true,citecolor=blue,linkcolor=red,anchorcolor=blue}
\usepackage[notcite,notref]{showkeys}
\usepackage{subfigure}
\usepackage{color}
\usepackage{stmaryrd}
\usepackage{multirow}
\usepackage{booktabs,siunitx}
\usepackage{multicol}
\usepackage{enumitem}
\usepackage{epstopdf}

\usepackage[marginparwidth=2.5cm, marginparsep=3mm]{geometry}

\usepackage{textgreek}
\usepackage{upgreek}

\usepackage{relsize}

\setlength{\columnsep}{0.1cm}
\DeclareTextAccent{\myacc}{T1}{4}

\textwidth=14.8cm
\textheight=21cm
\setlength{\oddsidemargin}{1.cm}
\setlength{\evensidemargin}{1.cm}
\usepackage{marginnote}

\catcode`\@=11 \theoremstyle{plain}
\@addtoreset{equation}{section}   
\renewcommand{\theequation}{\arabic{section}.\arabic{equation}}
\@addtoreset{figure}{section}
\renewcommand\thefigure{\thesection.\@arabic\c@figure}
\newtheorem{thm}{\bf Theorem}[section]

\newenvironment{theorem}{\begin{thm}} {\end{thm}}
\newtheorem{cor}{\bf Corollary}

\newenvironment{corollary}{\begin{cor}} {\end{cor}}
\newtheorem{lmm}{\bf Lemma}[section]

\newenvironment{lemma}{\begin{lmm}}{\end{lmm}}
\theoremstyle{remark}
\newtheorem{rem}{\bf Remark}[section]


\definecolor{ligreen}{rgb}{0.0, 0.3, 0.0}

\definecolor{darkblue}{rgb}{0.0, 0.0, 0.55}

\definecolor{anti-flashwhite}{rgb}{0.55, 0.57, 0.68}

\newcommand{\refl}[1]{Lemma~{\rm \ref{#1}}}
\newcommand{\reft}[1]{Theorem~{\rm \ref{#1}}}

\def \ri {{\rm i}}
\def \re {{\rm e}}
\def \rd {{\rm d}}

\def \bC {\mathbb{C}}

\allowdisplaybreaks


\graphicspath{{./Figures/} } 

\def\Om{{\mathbb T^d}}
\def\lam{\lambda}

\begin{document}
\bibliographystyle{plain}

\title[Lie-Totter time-splitting method for LogSE] {Low regularity  estimates of the Lie-Totter time-splitting  Fourier spectral method  for the logarithmic Schr\"{o}dinger equation}
\author[X. Zhang\, \&\, L. Wang]{
		\;\; Xiaolong Zhang$^{\dag}$ \; and \;  Li-Lian Wang$^{\ddag}$
		}
	\thanks{${}^{\dag}$MOE-LCSM, CHP-LCOCS, School of Mathematics and Statistics, Hunan Normal University, Changsha, 410081, China.  The research of this author is supported in part by the National Natural Science Foundation of China (No.12101229) and the Hunan Provincial Natural Science Foundation of China (No.2021JJ40331). Email: xlzhang@hunnu.edu.cn. \\
			\indent ${}^{\ddag}$Corresponding author. Division of Mathematical Sciences, School of Physical
		and Mathematical Sciences, Nanyang Technological University,
		637371, Singapore.  Email: lilian@ntu.edu.sg (L. Wang).\\
		\indent The first author would like to acknowledge the support of  China Scholarship Council (CSC, No. 202106720024) for the visit of NTU}

\keywords{Logarithmic Schr\"{o}dinger equation, time-splitting scheme, Fourier spectral methods, fractional Sobolev space,  low regularity} \subjclass[2020]{65M15, 35Q55, 65M70, 81Q05}

\begin{abstract} In this paper, we conduct rigorous error analysis of the Lie-Totter time-splitting Fourier spectral scheme for the nonlinear Schr\"odinger  equation  with a logarithmic nonlinear term $f(u)=u\ln \!|u|^2$ (LogSE)  and periodic boundary conditions on a $d$-dimensional torus $\mathbb T^d$. Different from  existing works based on regularisation of the nonlinear term $ f(u)\approx f^\varepsilon(u)=u\ln (|u| + \varepsilon )^2,$ we directly discretize the LogSE with the understanding  $f(0)=0.$ Remarkably, in the time-splitting scheme,    the  solution flow map of the nonlinear part:   $g(u)= u\,\re^{-\ri   t \ln\!|u|^{2}}$  has a higher regularity than $f(u)$ (which is not differentiable at $u=0$ but H\"older continuous), where $g(u)$ is Lipschitz continuous and possesses a certain fractional Sobolev regularity with index $0<s<1$.  Accordingly,  we can  derive the $L^2$-error estimate: $\mathcal{O}\big((\tau^{s/2} + N^{-s})\ln\! N\big)$  of the proposed scheme for the LogSE with low regularity solution $u\in C((0,T]; H^s( \mathbb{T}^d)\cap L^\infty( \mathbb{T}^d)).$ Moreover, we can show that the estimate holds for $s=1$ with more delicate analysis of the nonlinear term and the associated solution flow maps. Furthermore, we provide ample numerical results  to demonstrate such a fractional-order convergence for  initial data with low regularity.   This work is the first one devoted to the analysis of splitting scheme for the LogSE without regularisation in the low regularity setting, as far as we can tell.   
\end{abstract}
\maketitle

\section{Introduction}
\setcounter{equation}{0}
\setcounter{lmm}{0}
\setcounter{thm}{0}

 This paper concerns the (dimensionless)
  logarithmic Schr\"odinger equation (LogSE): 
\begin{equation}\label{eq:LogSE}
    \ri\, \partial_t u({x}, t) + \Delta u( {x}, t) = \lam u( x, t) \ln  | u( x, t)|^2; \quad  
    u({x}, 0) = u_0( x ),
\end{equation}
where $x\in \Omega\subseteq {\mathbb R^d}$ with $d\ge 1$ and  
$\lambda\in \mathbb{R}\setminus\{0\},$ and suitable boundary conditions should be imposed if $\Omega$ is bounded. 
Here, our focus is  on the analysis of the time-splitting scheme, so we consider the LogSE on the $d$-dimensional torus  $ \Omega= \mathbb{T}^d=[-\pi,\pi)^d$ with $2\pi$-periodic boundary conditions.    

The  LogSE was first introduced as a model for nonlinear wave mechanics by  Bia{\l}ynicki-Birula and Mycielski
\cite{Bialynicki1976nonlinear} and later found many applications in  physics and engineering (see e.g., \cite{Hefter1985Application,krolikowski2000unified,De2003logarithmic,Buljan2003Incoherent,Hansson2009propagation,Zloshchastiev2010Logarithmic,Avdeenkov2011quantum}). It has the  mass, momentum and energy conservation in common with usual nonlinear Schr\"odinger equations,  but possesses some unusual properties and richer dynamics which the usual ones may not have (e.g.,   ``energy'' with indefinite sign;  existence of exact {\em soliton-like  Gausson}  (standing wave) solutions (when $\lambda<0$); and  tensorisation property among others (see \cite{Bialynicki1976nonlinear,Carles2018Universal})).

One main  challenge of the LogSE is the low regularity of the nonlinear term $f(u)=u \ln|u|^2$. Its mathematical and numerical studies have attracted much recent attention. The investigation of the Cauchy problem dates back to  Cazenave and Haraux \cite{Cazenave1980}. We refer to \cite{Carles2018Universal,carles_logarithmic_2022,hayashi_cauchy_2023,carles_low_2023} (and the references therein)
for comprehensive review for old and new  results on existence, uniqueness and regularity in different senses and settings, together with some open questions. Here, we collect some relevant results:
\begin{itemize}
\item[(i)] As one part of the main results in  \cite[Theorem 1.1]{carles_low_2023}, 
{\em the LogSE \eqref{eq:LogSE} on $\Omega\in \{\mathbb R^d, \mathbb R^d_+,  \mathbb T^d\}$ with 
 $u_0 \in H^s(\Omega)$ for $0<s<1,$ has a unique solution $u \in C(\mathbb{R}; H^s(\Omega)).$ }
 \smallskip
\item[(ii)] According to \cite[Theorem 2.3]{carles_low_2023} (also see \cite{hayashi_cauchy_2023}), 
{\em the LogSE \eqref{eq:LogSE} on $\Omega={\mathbb T^d}$ with 
 $u_0 \in H^1(\mathbb{T}^d),$ has a unique solution $u \in C(\mathbb{R}; H^1(\mathbb{T}^d)).$ }
\end{itemize}
It is noteworthy that the proof of (i) in \cite{carles_low_2023}  is based on 
the analysis of the regularized LogSE (RLogSE):
\begin{equation}\label{eq:RLogSE}
    \ri \, \partial_t u^{\varepsilon} + \Delta u^{\varepsilon} = \lam u^{\varepsilon}\ln (  | u^{\varepsilon}| + \varepsilon    )^2 ;
    \quad u^{\varepsilon}|_{t=0} = u_0,
\end{equation}
and the equivalent representation of the 
fractional Sobolev norm  (see  \cite[Proposition 1.3]{BenyiOH2013fracSoblevTorus} and   \refl{lem:normEquiv} for $\Omega={\mathbb T}^d$). 

The numerical study  of the LogSE has  been a research topic of much recent interest. In general, the existing attempts can be classified into two categories:
\begin{itemize}
    \item[(a)] {\bf Regularised numerical methods.} 
    Most  existing works were on the discretisation of the RLogSE \eqref{eq:RLogSE}, in order to avoid the blowup of $\ln |u|$ at $u=0$ in the original LogSE \eqref{eq:LogSE}. Bao et al  \cite{Bao2019Error} first proposed and analyzed   
    the semi-implicit Crank-Nicolson leap-frog in time and central difference in  space for the RLogSE in a bounded domain $\Omega$.  It was shown therein that
 if $u_0\in H^2(\Omega),$ then
\begin{equation}\label{uepsest}
\|u^{\varepsilon}-u\|_{L^{\infty}(0, T ; L^{2}(\Omega))} \leq C_{1} \varepsilon, \quad\|u^{\varepsilon}-u\|_{L^{\infty}(0, T ; H^{1}(\Omega))} \leq C_{2} \sqrt \varepsilon,
\end{equation}
where the positive constants $C_1=C_1(|\lambda|, T,|\Omega|)$ and $C_2=C_2(|\lambda|, T,|\Omega|,\|u_{0}\|_{H^{2}(\Omega)}).$  However, the error estimate contains an unpleasant  factor ${\rm e}^{C|\!\ln \varepsilon|^2T}$ and the required  regularity was imposed on   $u^{\varepsilon}.$   Bao et al \cite{Bao2019Regularized} further analyzed  the  Lie-Trotter time-splitting scheme and obtained the $L^2$-error bound
\begin{equation}\label{uepsest22}
\|u^{\varepsilon, n}-u^{\varepsilon}(t_n)\|_{L^2(\Omega)} \leq C(T,\|u^{\varepsilon}\|_{L^{\infty}([0, T] ; H^2(\Omega))}) \ln (\varepsilon^{-1})\, \sqrt{\tau},
\end{equation}
for $d=2,3.$ In \cite{Bao2019Regularized}, the Fourier spectral method was employed for spatial discretisation, but the error estimate for the full discretised scheme was not conducted. In  \cite{Carles2022numerical}, the  regularized time-splitting and Fourier spectral method was applied  to numerically solve the LogSE with a harmonic potential to demonstrate the interesting dynamics (see \cite{Carles2021Logarithmic,Carles2021nonuniqueness}). In addition,  Bao et al \cite{Bao2022Error} investigated   a different  regularisation from the energy perspective. 
\vskip 3pt
\item[(b)] {\bf Direct  (non-regularised) numerical methods.} 
 Although  $f(u)=u\ln|u|^2$ is not differentiable at 
  $u=0,$   it is continuous if we define $f(0)=0$.
  It is therefore feasible to directly discretise $f(u)$ in a grid-based method.   Paraschis and  Zouraris  \cite{Paraschis2023FixedpointLogSE}   analysed  the (implicit) Crank-Nicolson scheme
  for  \eqref{eq:LogSE}, where a nonlinear system had to be solved at each time step. Moreover, the non-differentiability of $f(u)$ ruled out the use of Newton-type iterative methods, so 
   the fixed-point iteration was employed therein. 
Wang et al \cite{WangYan2022LogSE} analyzed for the first time the linearized implicit-explicit (IMEX) time-discretisation with finite element approximation in space for the LogSE without regularization. However, it is typical to require strong regularity to achieve the expected order of convergence. 
\end{itemize}


\smallskip
The purpose of this paper is to analyze 
the Lie-Totter time-splitting and Fourier spectral method for the LogSE (without regularisation)  with low regularity initial data and solution as in (i)-(ii) (see  \cite{carles_low_2023,hayashi_cauchy_2023}).  More specifically, we shall show that
under the regularity: $u_0\in L^2(\mathbb T^d)$ and 
$ u \in C((0,T]; H^{s}({\mathbb T^d})\cap L^\infty( \mathbb{T}^d ))$ for some $0 < s < 1,$ the $L^2$-error  is of order $\mathcal{O}\big((\tau^{s/2} + N^{-s})\ln\! N\big)$ (see \reft{THM:errestfrac}). 
Then we shall further prove such an estimate can be extended to $s=1$ (see \reft{THM:errestH1}).
Some critical tools and arguments for the analysis include 
\begin{itemize}
\item Optimal characterisation of the fractional Sobolev $H^s$-regularity of the flow map $\Phi_B^t[u]$ associated with the nonlinear part (see \reft{PhiBbnd}), where the equivalent form of the fractional Sobolev norm in \refl{lem:normEquiv} are crucial.
\smallskip
\item Delicate analysis  of the regularity of the nonlinear term $f(u)$ and its regularized counterpart $f^\varepsilon(u),$ likewise for  
$\Phi_B^t[u]$ and $\Phi_B^{t,\varepsilon}[u]$ (see Sections \ref{sect3:Error}-\ref{sec:H1mainResult}).
\end{itemize} 
We shall provide ample numerical evidences to demonstrate such fractional-order convergence behaviours with given low regularity initial data as in the theorems.  

We regard this paper as the first work devoted to the analysis of time-splitting scheme of the LogSE without regularisation and in low regularity setting. It is in line with the growing recent interest in numerical study of nonlinear Schr\"odinger equations with singular nonlinearity e.g., $f(u)=|u|^{\sigma} u$ with $\sigma<1$ (see 
\cite{bao_error_2023,bao_optimal_2023b}); or  $\sigma=2$ but with  low regularity initial data (see \cite{ostermann_low_2018,ostermann_fourier_2022,cao_new_2023}).

The rest of this paper is organized as follows. In Section \ref{sec:LTS}, we introduce the Lie-Trotter time-splitting scheme for the LogSE \eqref{eq:LogSE} and present the important properties of the two flow maps. In Section \ref{sect3:Error}, we introduce the full-discretisation scheme and 
 derive  its fractional-order convergence result.     In Section \ref{sec:H1mainResult}, we extend the estimate to the case $s=1.$ 
We present 
in Section \ref{sec:numerresults} some  
numerical results and demonstrate the predicted convergence behaviors for the LogSE with  low regularity initial data.

\section{The Lie-Trotter time-splitting scheme for the LogSE}\label{sec:LTS}
In this section, we introduce the Lie-Trotter splitting scheme
for time discretization of the LogSE \eqref{eq:LogSE}, and then derive some important  properties of the solution flow maps associated with the two  sub-problems. Most importantly, we study their stability and lower regularity in the fractional periodic Sobolev space.   

\subsection{Fractional periodic  Sobolev space}\label{subsection:fracSob}
Throughout this paper,  let $\mathbb{C},$ $\mathbb{R},$ $\mathbb{Z}$,  $\mathbb{N}$, and $\mathbb{N}^{+}$  be the set of
all complex numbers, real numbers,  integers, non-negative integers, and positive integers, respectively. In particular, we denote ${\mathbb Z}_0={\mathbb Z}\setminus\{0\}.$
Further, let $x = (x_1,\cdots,x_d)\in {\mathbb R}^d$  and  $|x|$ be its length, likewise for $k\in {\mathbb Z}^d$ and $|k|.$ 

  We are concerned with $d$-dimensional $2\pi$-periodic functions  defined on the $d$-dimensional torus: $\mathbb{T}^d=[-\pi,\pi)^d$.  For any $u\in L^2(\mathbb T^d),$ we expand 
\begin{equation}\label{FS}
u(x) = \sum_{ k \in \mathbb{Z}^{d} }  \hat{u}_{k}  \re^{ \ri k \cdot x}, \quad \hat{u}_k= \frac{1}{|\mathbb T|^d}\int_{\mathbb{T}^d} u(x) \re^{-\ri  k \cdot x}\, \rd x,
\end{equation}
where $|\mathbb T|=2\pi.$ We define the usual fractional periodic Sobolev space
\begin{equation}\label{def:FSF}
	H^{s}({\mathbb T^d}) := \big\{ u\in L^2( {\mathbb T^d}) \; : \;  \| u \|_{H^{s}({\mathbb T^d}) }< \infty \big\},\quad s\ge 0,
\end{equation}
endowed with the norm and semi-norm
\begin{equation}\label{FSnormsemi} 
\| u \|_{ H^{s}( {\mathbb T^d} )}: = |\mathbb T|^{\frac d 2}\bigg(\sum_{k  \in \mathbb{Z}^d }   (1 + |k|^2)^{s}\,  |\hat{u}_k |^2 \bigg)^{\frac 1 2}, \quad
|u|_{H^{s}({\mathbb T^d}) }:=  |\mathbb T|^{\frac d 2} \bigg(  \sum_{ k \in \mathbb{Z}_0^d } | k |^{2s} \, |\hat{u}_k |^2 \bigg)^{\frac 12}.
\end{equation}
In particular, if $s=0,$ we denote the $L^2$-norm simply by $\|\cdot\|.$ 


Remarkably,  if $0<s<1,$  the above semi-norm expressed in frequency space has the following equivalent characterization in physical space. Such an equivalence plays an exceedingly important role in the forthcoming error analysis.  
\begin{lemma}\label{lem:normEquiv} 
If $u \in {H}^{s} ( \mathbb{T}^d)$ with  $0< s < 1$, we have 
\begin{equation}\label{equivdefn}
{\mathcal C}_{1}\, |u |_{ {H}^s ( \mathbb{T}^d ) }\le  \Big(\int_{  \mathbb{T}^d } \int_{ \mathbb{T}^d } \frac{|u(x+y) - u(y)|^2}{|x|^{d+2s}} \rd x \rd y\Big)^{\frac 1 2}\le {\mathcal C}_{2}\, |u |_{{H}^s ( \mathbb{T}^d )},
\end{equation}
where ${\mathcal C}_1,{\mathcal C}_2$ are positive constants independent of $u.$
\end{lemma}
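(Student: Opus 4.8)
The plan is to pass to the frequency side via Parseval's identity and reduce the entire statement to a uniform two-sided bound on a single scalar integral. First I would fix $x\in\mathbb{T}^d$ and note that $u(\cdot+x)-u(\cdot)$ has Fourier coefficients $(\re^{\ri k\cdot x}-1)\hat u_k$, so Parseval (with the normalization in \eqref{FS}) gives
\[
\int_{\mathbb{T}^d}|u(x+y)-u(y)|^2\,\rd y = |\mathbb{T}|^d\sum_{k\in\mathbb{Z}^d}|\re^{\ri k\cdot x}-1|^2\,|\hat u_k|^2 .
\]
Using $|\re^{\ri k\cdot x}-1|^2=4\sin^2(k\cdot x/2)$, dividing by $|x|^{d+2s}$ and integrating in $x$ (Tonelli justifies interchanging the sum and the integral, both being nonnegative), the $k=0$ term drops out and the double integral in \eqref{equivdefn} equals $|\mathbb{T}|^d\sum_{k\in\mathbb{Z}_0^d}I(k)\,|\hat u_k|^2$, where
\[
I(k):=4\int_{\mathbb{T}^d}\frac{\sin^2(k\cdot x/2)}{|x|^{d+2s}}\,\rd x .
\]
Since $|u|_{H^s(\mathbb{T}^d)}^2=|\mathbb{T}|^d\sum_{k\in\mathbb{Z}_0^d}|k|^{2s}|\hat u_k|^2$, the equivalence \eqref{equivdefn} is reduced to finding constants $0<c_1\le c_2$, independent of $k$, such that $c_1|k|^{2s}\le I(k)\le c_2|k|^{2s}$ for all $k\in\mathbb{Z}_0^d$.

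The key step is this uniform bound, which I would obtain by a rotation-and-scaling change of variables. Rotating coordinates so that $k$ points along the first axis, I write $k\cdot x=|k|\,\xi_1$ in the new variable $\xi=(\xi_1,\xi')$; since rotations fix balls centered at the origin and $B(0,\pi)\subset\mathbb{T}^d\subset B(0,\sqrt{d}\,\pi)$, the rotated domain $R$ still satisfies $B(0,\pi)\subset R\subset B(0,\sqrt{d}\,\pi)$. Substituting $z=|k|\,\xi$, so that $\rd x=|k|^{-d}\rd z$, $|x|=|z|/|k|$ and $k\cdot x=z_1$, yields
\[
I(k)=4\,|k|^{2s}\int_{|k|R}\frac{\sin^2(z_1/2)}{|z|^{d+2s}}\,\rd z ,
\]
where $|k|R$ contains $B(0,\pi)$ (as $|k|\ge1$) and is contained in $\mathbb{R}^d$. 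For the lower bound I would shrink the $z$-domain to the fixed ball $B(0,\pi)$, giving $I(k)\ge 4|k|^{2s}J_0$ with $J_0:=\int_{B(0,\pi)}\sin^2(z_1/2)\,|z|^{-d-2s}\,\rd z\in(0,\infty)$; finiteness near the origin uses $\sin^2(z_1/2)\lesssim|z|^2$ together with $2-2s>0$, and positivity is clear. For the upper bound I would enlarge the $z$-domain to all of $\mathbb{R}^d$, giving $I(k)\le 4|k|^{2s}J_\infty$ with $J_\infty:=\int_{\mathbb{R}^d}\sin^2(z_1/2)\,|z|^{-d-2s}\,\rd z<\infty$, where integrability at the origin again uses $2-2s>0$ and integrability at infinity uses $\sin^2\le1$ with $2s>0$.

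Combining the two bounds with $c_1=4J_0$ and $c_2=4J_\infty$ squeezes the double integral between $c_1|u|_{H^s(\mathbb{T}^d)}^2$ and $c_2|u|_{H^s(\mathbb{T}^d)}^2$; taking square roots gives \eqref{equivdefn} with $\mathcal{C}_1=\sqrt{c_1}$ and $\mathcal{C}_2=\sqrt{c_2}$, both depending only on $d$ and $s$ and not on $u$. I expect the main obstacle to be precisely the uniform estimate $I(k)\asymp|k|^{2s}$, and within it the lower bound, where one must rule out degeneracy from the oscillation of $\sin^2(k\cdot x/2)$ as $|k|\to\infty$. The rotation-and-scaling substitution is what resolves this cleanly: it extracts the exact scale factor $|k|^{2s}$ and leaves the fixed profile $\sin^2(z_1/2)/|z|^{d+2s}$ integrated over a domain uniformly trapped between a fixed ball and all of $\mathbb{R}^d$, so that $J_0,J_\infty$ are manifestly independent of $k$. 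It is worth noting that both endpoints of the admissible range are used here: $s<1$ secures integrability at the origin, while $s>0$ secures it at infinity.
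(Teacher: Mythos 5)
Your proof is correct. Note that the paper itself does not give an argument here: it simply invokes B\'enyi--Oh \cite[Proposition 1.3]{BenyiOH2013fracSoblevTorus} after rescaling the period from $1$ to $2\pi$. Your reduction is the same as in that reference (and in the authors' own unused draft of the proof): Parseval in $y$, Tonelli, and then a uniform two-sided bound $c_1|k|^{2s}\le I(k)\le c_2|k|^{2s}$ on the kernel integral $I(k)=4\int_{\mathbb{T}^d}\sin^2(k\cdot x/2)\,|x|^{-d-2s}\,\rd x$. Where you genuinely diverge is the lower bound: B\'enyi--Oh obtain it by constructing a cone-like subregion $D\subset\{|x|\le \pi/|k|\}$ on which $\sin^2(k\cdot x/2)\gtrsim |k|^2|x|^2$ and integrating in polar coordinates, whereas you extract the factor $|k|^{2s}$ exactly by the rotation-and-scaling substitution $z=|k|Q^{\mathsf T}x$ and then trap the rescaled domain between the fixed ball $B(0,\pi)$ (using $|k|\ge 1$) and $\mathbb{R}^d$. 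This is cleaner and more unified, since both bounds come from the same change of variables with explicit constants $4J_0$ and $4J_\infty$ depending only on $d$ and $s$; the integrability checks ($s<1$ at the origin, $s>0$ at infinity for $J_\infty$) are all in place. The only cost is that you must observe that the rotated cube still contains $B(0,\pi)$, which you do.
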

\begin{proof} The proof  follows from B$\acute{e}$nyi and Oh  \cite[Proposition 1.3]{BenyiOH2013fracSoblevTorus} directly with a change of  periodicity from $1$ to $2\pi,$ 
i.e., the domain from   $[-\frac 1 2, \frac 1 2)^d$ to $\mathbb T^d=[-\pi, \pi)^d.$  
\end{proof}

It is also important to remark that the situation is reminiscent of the  frequency-physical equivalence in the fractional Sobolev space  $W^{s,2}(\mathbb R^d)$ 
(see e.g., \cite{Demengel2012FracSobolev}).  
Let $\Omega$ be a general, possibly nonsmooth, open domain in ${\mathbb R}^d,$ and recall the  Gagliardo semi-norm of the fractional Sobolev space $W^{s,2}(\Omega):$  
  \begin{equation}\label{defnGag}
[u]_{W^{s,2}(\Omega)}:=\Big(\int_{\Omega} \int_{\Omega} \frac{ |u(x) - u(y)|^2}{|x-y|^{d+2s}} \rd x \rd y\Big)^{\frac 12},\quad 0<s<1.
\end{equation}
In particular,  when $\Omega={\mathbb R}^d,$  the  
Gagliardo semi-norm of $W^{s,2}(\mathbb R^d)$ has the equivalent forms
\begin{equation}\label{defnGag2}
\begin{split}
[u]_{W^{s,2}(\mathbb R^d)}^2 & = \int_{\mathbb R^d} \int_{\mathbb R^d} \frac{ |u(x) - u(y)|^2}{|x-y|^{d+2s}} \rd x \rd y= \int_{ \mathbb{R}^d } \int_{ \mathbb{R}^d } \frac{ |u(x + y ) - u(y)|^2}{|x|^{d+2s}} \rd x \rd y
 \\
 &\cong 
 \int_{\mathbb{R}^d}\big(1+|\xi|^{2 s}\big)|\mathscr{F} [u](\xi)|^2 {\rm d} \xi,
\end{split}
\end{equation}
where $\mathscr{F} [u](\xi)$ is the Fourier transform of $u.$ However, the identity in the first line of \eqref{defnGag2} does not hold for 
$u\in H^s(\mathbb T^d),$ so $W^{s,2}(\mathbb T^d)\not= H^s(\mathbb T^d).$

It is also noteworthy that the same setting in \eqref{FS}-\eqref{FSnormsemi} can be applied to tensorial Legendre polynomial expansions to define the corresponding fractional Sobolev space $W^{s,2}(\Omega)$ with $\Omega=(-1,1)^d$ (see e.g.,
\cite{Shen2011Book}). However, it is still unknown if there exists a similar equivalent form of the semi-norm in physical space as in \refl{lem:normEquiv}. 

\subsection{Lie-Trotter time-splitting scheme} 
We  formulate the LogSE \eqref{eq:LogSE} as
\begin{equation}\label{LogSE:split}
\ri \,\partial_t u  = Au + Bu,
\end{equation}
with
\begin{equation*}
Au:= - \Delta u, \quad Bu:=  \lambda f(u)=\lambda u\ln |u|^2,
\end{equation*}
and then split \eqref{LogSE:split} into two separated flows:
\begin{subequations}\label{ABproblems}
    \begin{equation}\label{eq:lieeq1}
\hspace*{-45pt}{\rm (A)}: \quad
	\begin{dcases}
		\ri\, \partial_t v(x, t) =-\Delta v(x, t),    \\
		  v(x, 0) = v_0(x),  
	\end{dcases}
\end{equation}
\vskip 1pt
\begin{equation}\label{eq:lieeq3}
	{\rm (B)}: \quad \begin{dcases}
		\ri\, \partial_t w({x}, t) =  \lam w( {x},t)\ln| w( x, t)|^{2},\\ 
	w({x}, 0) = w_0({x}).
	\end{dcases}
\end{equation}
\end{subequations}
Formally, we can express their exact solutions in terms  of the flow maps:
\begin{subequations}\label{ABFlow-maps}
\begin{alignat}{4}
& {\rm (A)}: \quad  \Phi_A^t [v_0](x) \;:= \re^{\ri t \Delta} v_0(x)\,=v(x,t), \label{PhiA} \\
&{\rm (B)}: \quad \Phi_{B}^{t} [ w_0](x) := \re^{-\ri \lam  t \ln|w_0(x)|^{2}} w_0(x)\,=w(x,  t). \label{PhiB}
\end{alignat}
\end{subequations}
As the usual cubic Schr\"odinger equation, the solution formula  \eqref{PhiB}
can be  obtained by multiplying \eqref{eq:lieeq3} by the conjugate  $\bar w$ and taking the imaginary part: 
\begin{equation*}\label{Msubeqn2}
\ri\, \Re  \{\partial_{t} w \cdot \bar w\}= \frac{\ri}{2} {\partial_t} |w|^2  = \lambda \Im \{ |w|^2 \ln |w|^2 \, \} = 0,
\end{equation*}
which implies $|w(x,t)| = |w_0(x)|.$ Replacing $\ln |w|^2$ by $\ln |w_0|^2$ in  \eqref{eq:lieeq3}, we
 can solve  \eqref{eq:lieeq3} straightforwardly and derive \eqref{PhiB}.


Let $\tau>0$ be the time-stepping size, and  $t_m=m\tau$ for $ 0 \le m \le M:=[T/ \tau]$ and given final time $T>0.$ 
The Lie-Trotter time-splitting for the LogSE  \eqref{LogSE:split} is to find the approximations $\big\{U^m(x)\approx u(x,m\tau)\big\}_{m=1}^M$ recursively through 
\begin{equation}\label{eq:splititer}
\begin{dcases}
 U^{m} (x) =  \Phi_{A}^{\tau} \, \Phi_{B}^{\tau}[ U^{m-1} ](x),\;\;\; m=1,\ldots,M, \\
 U^0(x)=u_0(x),
 \end{dcases}
\end{equation}
or equivalently,
\begin{equation}\label{eq:logSplitSol}
 U^{m} (x)=( \Phi_{A}^{\tau}\,\Phi_{B}^{\tau})^{m} [ U^0](x). 
\end{equation}

It is worth mentioning that the above derivation and scheme  work for several typical types of   domains and boundary conditions, for example, 
(i) $\Omega=\mathbb T^d$ with periodic boundary conditions; (ii) $\Omega={\mathbb R}^d$ with far-field decaying boundary conditions; and (iii) $\Omega$ being a bounded domain with homogeneous Dirichlet or Neumann boundary conditions. Although our focus is on the periodic case (i), we shall indicate if a property is valid for other domains and boundary conditions.    For example, one verifies readily that it preserves the mass: 
\begin{equation}\label{eq:mass}
\| U^{m+1}\|= \| U^{m}\|=\cdots= \| u_0\|.
\end{equation}

\begin{rem}\label{Rmk:Diff-Bao} {\em In 
Bao et al \cite{Bao2019Regularized}, the time-splitting scheme was implemented on the regularized LogSE with the flow map $\Phi_B^{t,\varepsilon}[w_0]= \re^{-\ri \lam  t \ln( |w_0| + \varepsilon )^{2}} w_0$ to avoid the blowup of $\ln |w_0|$ at $|w_0|=0.$  In contrast, we directly work at the LogSE and  understand that $\Phi_B^t[w_0]= \re^{-\ri \lam  t \ln|w_0|^{2}} w_0=0,$ when $|w_0|=0.$ \qed} 
\end{rem}

\subsection{Analysis of the flow maps}  

As we shall see in the forthcoming section, the  following $L^2$-estimate
of the  flow map $\Phi_{A}^{t}$ of the linear Schr\"odinger equation \eqref{eq:lieeq1} plays a crucial role in the error analysis.
 Importantly, we can justify the order in $t$ is optimal, so the initial data should have $H^2$-regularity when we say it's a first-order scheme.
\begin{theorem}\label{PhiA:Projerr}
	If $v_0 \in {H}^{r}( {\mathbb T^d} ) $ with $0\le r \le 2$ and $d\ge 1$, then for $t>0,$
	\begin{equation}\label{PhiA:projerrine}
		 \big\|   \Phi_A^{t}  [  v_0  ] -v_0 \big\|  \le 2^{ 1 -\frac r 2 }  \,t^{ \frac r2}\, | v_0 |_{ {H}^{r}(\Om) },
	\end{equation}
 and for $0<t<1,$
 \begin{equation}\label{PhiA:projerrineB}
		 \big\|   \Phi_A^{t}  [  v_0  ] -v_0 \big\|  \ge 2^{ 1 -\frac r 2 } |\mathbb{T}|^{\frac d2} \, c_0 \,t^{ \frac r2}\, \Big(\sum_{ k \in \mathbb{K}_{c_0} }  |k|^{2r}  | \hat{v}_{0,k} |^2\Big)^{\frac 12}, 
	\end{equation}
	where $\{\hat{v}_{0,k}\}$ are the Fourier expansion coefficients of $v_0(x),$ and 
 \begin{equation}\label{Kc0}
\mathbb{K}_{c_0}:= \Big\{k\in \mathbb{Z}^d : \frac 2 t \sin^{-1}(c_0)  \le |k|^2 \le  \frac 2  t \,{\rm sinc}^{-1}(c_0) \Big\},
\end{equation}
\end{theorem}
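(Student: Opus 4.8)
The plan is to diagonalise the flow map in the Fourier basis and thereby reduce both estimates to a single scalar inequality for $\sin^2(t|k|^2/2)$. Since $\Phi_A^t[v_0]=\re^{\ri t\Delta}v_0$ multiplies the $k$-th Fourier mode by $\re^{-\ri t|k|^2}$, I would first write
\[
\Phi_A^t[v_0](x)-v_0(x)=\sum_{k\in\mathbb{Z}^d}\big(\re^{-\ri t|k|^2}-1\big)\hat v_{0,k}\,\re^{\ri k\cdot x}.
\]
Applying Parseval's identity in the normalisation fixed by \eqref{FSnormsemi}, together with the elementary identity $|\re^{-\ri\theta}-1|^2=4\sin^2(\theta/2)$, this gives the master formula
\[
\big\|\Phi_A^t[v_0]-v_0\big\|^2=4\,|\mathbb T|^{d}\sum_{k\in\mathbb{Z}^d}\sin^2\!\Big(\tfrac{t|k|^2}{2}\Big)\,|\hat v_{0,k}|^2 .
\]
Both \eqref{PhiA:projerrine} and \eqref{PhiA:projerrineB} then follow by bounding the scalar factor $\sin^2(t|k|^2/2)$ above and below by a constant multiple of $t^{r}|k|^{2r}$ and comparing with the definition of $|v_0|_{H^{r}(\Om)}$.

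For the upper bound \eqref{PhiA:projerrine} I would use the interpolation inequality $\sin^2\theta\le\min\{1,\theta^2\}\le|\theta|^{r}$, valid for every $\theta\in\mathbb R$ and $0\le r\le 2$: the first step is standard, and the second holds because $|\theta|^{r}\ge\theta^2$ when $|\theta|\le1$ while $|\theta|^{r}\ge1$ when $|\theta|\ge1$. Taking $\theta=t|k|^2/2$ yields $\sin^2(t|k|^2/2)\le 2^{-r}t^{r}|k|^{2r}$; the mode $k=0$ contributes nothing, and inserting this into the master formula gives $\|\Phi_A^t[v_0]-v_0\|^2\le 2^{2-r}t^{r}\,|v_0|_{H^{r}(\Om)}^2$, which is the claim after taking square roots.

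The lower bound \eqref{PhiA:projerrineB} is the delicate step and rests on a matching pointwise estimate on the window $\mathbb{K}_{c_0}$. Writing $\theta=t|k|^2/2$, the definition \eqref{Kc0} says precisely that $k\in\mathbb{K}_{c_0}$ iff $\sin^{-1}(c_0)\le\theta\le{\rm sinc}^{-1}(c_0)$, where ${\rm sinc}(\theta)=\sin\theta/\theta$. On this interval I would prove $\sin\theta\ge c_0\,\theta^{r/2}$ by combining two monotonicity facts. First, ${\rm sinc}$ is decreasing on $(0,\pi)$, so $\theta\le{\rm sinc}^{-1}(c_0)$ forces $\sin\theta\ge c_0\theta$. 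Second, since $\sin^{-1}(c_0)\le\theta\le{\rm sinc}^{-1}(c_0)\le\pi-\sin^{-1}(c_0)$ — the last inequality holding because $\pi-\sin^{-1}(c_0)\ge\pi/2>1$, which is exactly what ${\rm sinc}\big(\pi-\sin^{-1}(c_0)\big)\le c_0$ requires — we have $\sin\theta\ge c_0$. Splitting into the cases $\theta\le1$ (use $\sin\theta\ge c_0\ge c_0\theta^{r/2}$) and $\theta\ge1$ (use $\sin\theta\ge c_0\theta\ge c_0\theta^{r/2}$, since $r/2\le1$) then gives $\sin\theta\ge c_0\theta^{r/2}$ throughout $\mathbb{K}_{c_0}$. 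Squaring yields $\sin^2(t|k|^2/2)\ge c_0^2\,2^{-r}t^{r}|k|^{2r}$, and restricting the sum in the master formula to $k\in\mathbb{K}_{c_0}$ produces the stated bound.

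I expect the main obstacle to be exactly this lower-bound step: establishing the two ${\rm sinc}/\sin^{-1}$ inequalities and, in particular, verifying that the $\theta$-window $[\sin^{-1}(c_0),{\rm sinc}^{-1}(c_0)]$ sits inside $[\sin^{-1}(c_0),\pi-\sin^{-1}(c_0)]$, so that the bound $\sin\theta\ge c_0$ is actually available where it is needed. The hypothesis $0<t<1$ plays no role in the pointwise inequality itself; it enters only to ensure that, as $t$ decreases, the window $\mathbb{K}_{c_0}$ sweeps through arbitrarily high frequencies, which is what makes \eqref{PhiA:projerrineB} witness the sharpness of the exponent $r/2$ and hence the optimality asserted before the theorem.
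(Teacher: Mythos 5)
Your proposal is correct and follows essentially the same route as the paper: Parseval's identity reduces both bounds to the scalar inequalities $\sin^2\theta\le|\theta|^{r}$ (upper bound) and $\sin\theta\ge c_0\,\theta^{r/2}$ on the window $\theta\in[\sin^{-1}(c_0),{\rm sinc}^{-1}(c_0)]$ (lower bound), which is exactly the paper's strategy via the kernel $g(\xi;s)=\sin\xi/\xi^{s}$ and its lower envelope $G(\xi)$. Your case split at $\theta=1$ and the explicit monotonicity argument for ${\rm sinc}$ reproduce the paper's bound $g(\xi;s)\ge G(\xi)\ge c_0$ in a slightly more self-contained way (the paper appeals to the intermediate value theorem at that point), with the implicit restriction $c_0\in(0,\sin 1)$ needed for the window to be nonempty, just as in the paper.
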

\begin{proof} We expand $v_0$ in its Fourier series and find from  \eqref{PhiA} readily that
\begin{equation*}
v_0(x) = \sum_{ k \in \mathbb{Z}^d}   \hat{v}_{0,k} \,  \re^{\ri k \cdot x},\quad    \Phi_A^{t} [v_0] = \sum_{ k \in \mathbb{Z}^d}  \re^{-\ri | k |^2 t} \hat{v}_{0,k} \,  \re^{\ri k \cdot x},
\end{equation*}
so by the Parsval's identity, 
	\begin{equation}\label{eq:pfProjerr00}
	\begin{split}
			\big\|   \Phi_A^{t}  [ v_0 ] -v_0 \big\|^2  &  =  |\mathbb{T}|^d  \sum_{ k \in \mathbb{Z}^{d} } \big| 1- \re^{-\ri | k |^2 t} \big|^2 \, | \hat{v}_{0,k} |^2= 4  |\mathbb{T}|^d \sum_{ k \in \mathbb{Z}_0^{d} } \sin^2 \Big(\frac{1}{2} | k |^2 t \Big) | \hat{v}_{0,k} |^2.
			\end{split}
	\end{equation}
One verifies readily that 
\begin{equation}\label{sinxrineq}
|\sin \xi|\le \xi^{\frac r 2}, \quad  \xi \ge 0,\;\; 0\le r\le 2. 
\end{equation}
Indeed, it is obviously true for  $\xi\ge 1,$ while for $\xi\in [0,1),$  it is a direct consequence of the fact: $\sin \xi \le \xi\le \xi^{\frac r2}.$ Thus, 
the estimate \eqref{PhiA:projerrine} follows from 
 \eqref{eq:pfProjerr00}-\eqref{sinxrineq} and the definition of the semi-norm in  \eqref{FSnormsemi}. 

To derive the lower bound, 
we study  the sinc-type kernel function:
\begin{equation}\label{sincK}
g(\xi):=g(\xi;s):=\frac{\sin \xi } {\xi^s},\quad \xi\in (0,\pi),\;\; s=\frac r 2\in [0,1],
\end{equation}
where for $s=1,$ $g(\xi)={\rm sinc}(\xi).$ One verifies readily that  $g(\xi)$
  attains its unique maximum value at $\xi=\xi_*=\xi_*(s),$ where $\xi_*\in (0,\pi/2)$ satisfies  $\xi_*=s \tan \xi_*.$  Moreover, for $s\in [0,1],$ we have
  \begin{equation}
  \nonumber
g(\xi_*)=\max_{\xi\in [0,\pi]} g(\xi)\ge g(1)=\sin 1\approx 0.84147.
\end{equation}
On the other hand, for fixed $\xi\in (0,1]$ (resp.  $\xi\in (1,\pi]$), $\big(\frac 1 {\xi}\big)^s$ is monotonically increasing (resp. decreasing) in $s,$ so we have 
\begin{equation}\label{gboundA}
g(\xi;s)\ge G(\xi):=\begin{dcases}
g(\xi,0)=\sin \xi, & {\rm for}\;\; 0\le \xi\le 1,\\[2pt]
g(\xi,1)=\frac{\sin \xi}\xi, & {\rm for}\;\;  1<\xi \le \pi,  
\end{dcases}
\end{equation}
and  $G(\xi)\in [0,\sin 1]$ (see Figure \ref{fig:sincmaximum}).  Thus we deduce from the intermediate value theorem that for any $c_0\in (0,\sin 1),$ 
 \begin{equation}\label{gboundB}
g(\xi;s)\ge G(\xi)\ge  c_0,\quad \forall\, \xi \in \big[\sin^{-1}(c_0),\, {\rm sinc}^{-1}(c_0)\big].   
\end{equation}
Using  \eqref{gboundB} with $\xi =\frac 1 2 |k|^2 t$ and $s= r/2,$ we find from  \eqref{Kc0},  \eqref{eq:pfProjerr00} and \eqref{sincK}    that
\begin{equation}\label{eq:pfProjerr11}
	\begin{split}
			\big\|   \Phi_A^{t}  [ v_0 ] -v_0 \big\|^2  &  \ge  4 |\mathbb{T}|^d \sum_{ k \in \mathbb{K}_{c_0}} \sin^2 \Big(\frac{1}{2} | k |^2 t \Big) | \hat{v}_{0,k} |^2\\
   & \ge 2^{2-r}\, |\mathbb{T}|^d \,c_0^2 \, t^r\,  \sum_{ k \in \mathbb{K}_{c_0} }  |k|^{2r}  | \hat{v}_{0,k} |^2,
			\end{split}
	\end{equation}
 which completes the proof.  
\end{proof}

\begin{rem}\label{optimal-order}
{\em The estimates in Theorem \ref{PhiA:Projerr} imply that the order ${\mathcal O}(t^{\frac r 2})$
is optimal. Indeed, for small $t>0$ and fixed $c_0\in (0,\sin 1),$ the set  $\mathbb{K}_{c_0}$ is non-empty. For example,  if  $c_0=0.1$, then the length of the interval in \eqref{gboundB}: 
${\rm sinc}^{-1} (c_0)-\sin^{-1} (c_0) \approx 2.75,$ as demonstrated in Figure \ref{fig:sincmaximum}. 
Then for  any $v_0\in {\mathcal V}_{c_0}:={\rm span}\{{\rm e}^{\ri k\cdot x}\,:\, k\in \mathbb{K}_{c_0}\}$ and small $t>0,$ we infer from Theorem \ref{PhiA:Projerr} that
$$2^{ 1 -\frac r 2 } c_0 \,t^{ \frac r2}\, | v_0 |_{ {H}^{r}(\Om) } \le \big\|   \Phi_A^{t}  [  v_0  ] -v_0 \big\|  \le 2^{ 1 -\frac r 2 } \,t^{ \frac r2}\, | v_0 |_{ {H}^{r}(\Om) }, $$
which validates the optimal order ${\mathcal O}(t^{\frac r 2}).$
} \qed
\end{rem}

\begin{figure}
\centering
\includegraphics[width=0.4\textwidth,height=0.35\textwidth]{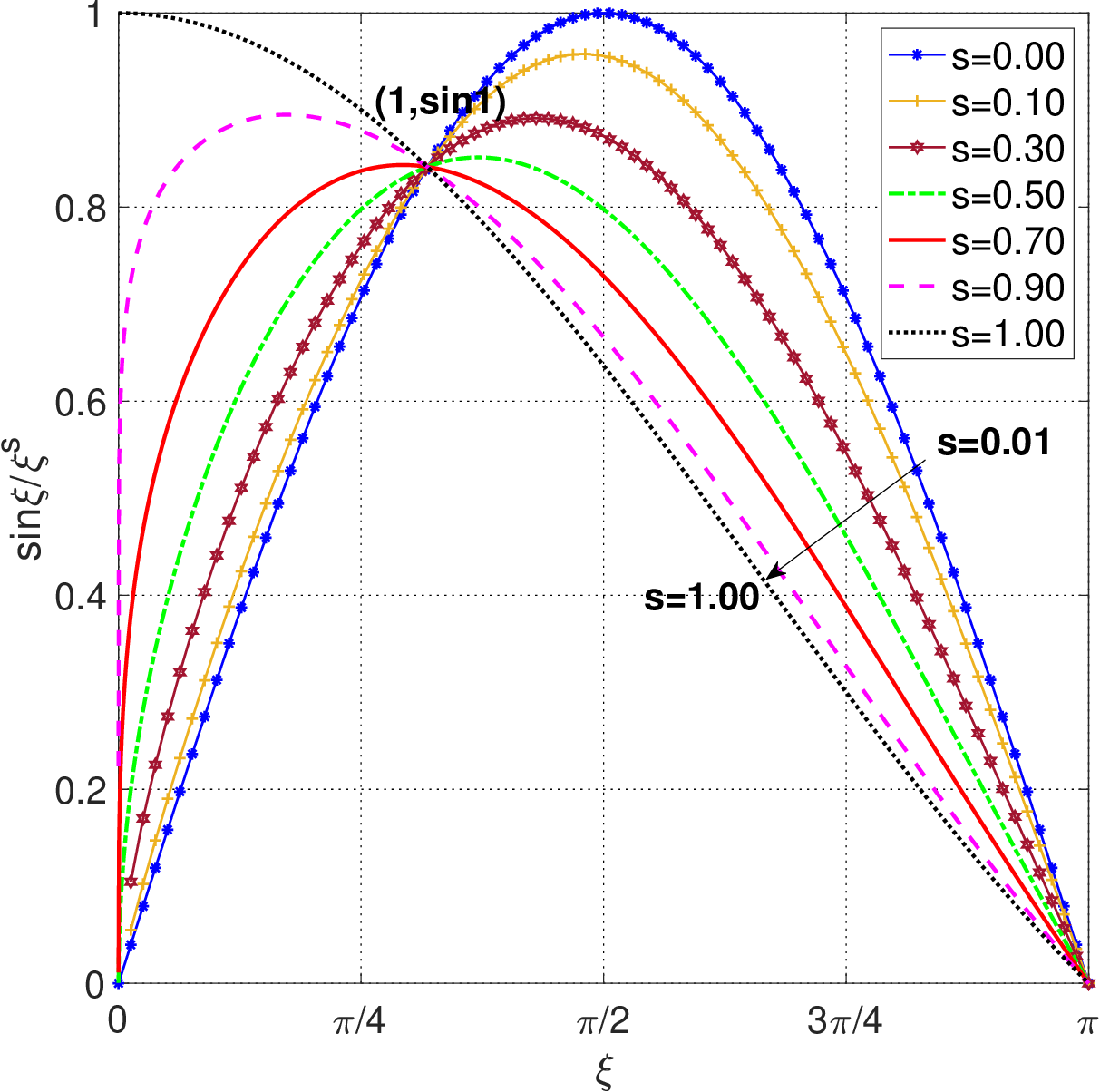} \quad \qquad
\includegraphics[width=0.4\textwidth,height=0.35\textwidth]{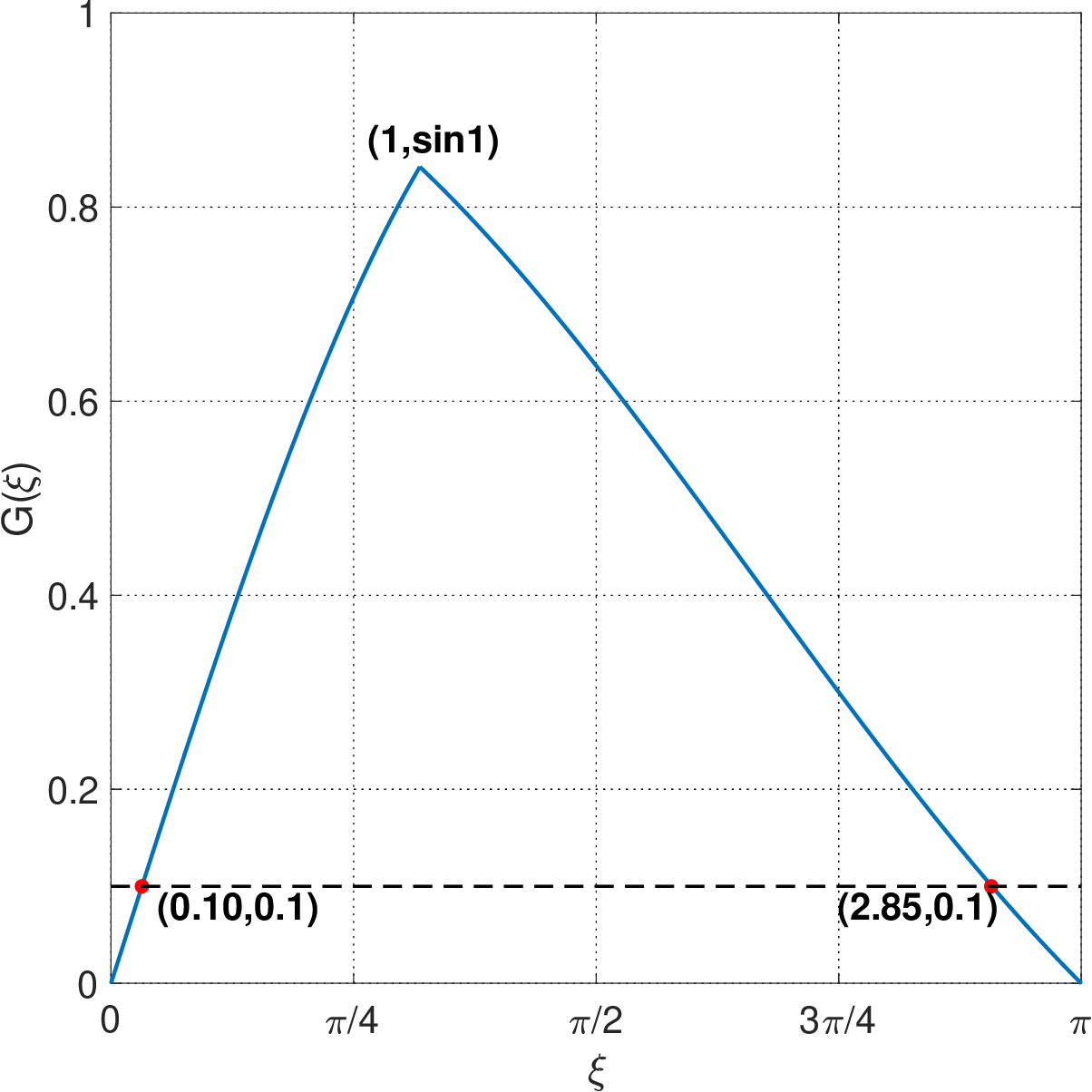}
\caption{Left: Graphs of $g(\xi;s)$ for various $s\in [0,1]$.  Right: Graph of $G(\xi)$ defined in \eqref{gboundA}.}\label{fig:sincmaximum}
\end{figure}

In what follows, we analyze the stability and regularity  of the flow map $\Phi_{B}^{t}:$ 
\begin{equation}\label{PhiB00}
 \Phi_{B}^{t} [ w_0]= \re^{-\ri \lam  t \ln |w_0|^{2}} w_0 = \big\{\!\cos(\lam  t \ln |w_0|^{2})-\ri \sin(\lam  t \ln |w_0|^{2})\big\} w_0.
\end{equation}
It is noteworthy that in the splitting scheme, 
the logarithmic function is composited with trigonometric functions, and interestingly, $\Phi_{B}^{t} [ w_0]$ has a higher regularity than the H\"older continuity of $f(w_0)=w_0\ln|w_0|^2$ in the (non-regularised) implicit or semi-implicit schemes (see  \cite{Paraschis2023FixedpointLogSE, WangYan2022LogSE}). 
\begin{theorem}\label{PhiBbnd} We have the following properties of the flow map $\Phi_B^t.$
\begin{itemize}
\item[{\rm (i)}] For any $x,y\in {\mathbb T^d},$ we have
\begin{equation}\label{diffbnd}
\big|\Phi_{B}^{t}[w_0](x)- \Phi_{B}^{t}[w_0](y)\big|\le (1 + 2|\lambda| t)  |w_0(x) - w_0(y)|,
\end{equation}
which implies if $w_0(x)$ is Lipschitz continuous, so is  $\Phi_{B}^{t}[w_0](x).$ 
\smallskip 
\item[{\rm (ii)}] For any   $w_0\in {H}^{s} (\Om)$ with $0< s <1$, we have
	\begin{equation}\label{eq:logPhiBc}
		| \Phi_{B}^{t} [ w_0] |_{ {H}^{s}(\Om) } \le {\mathcal C}_1^{-1} \mathcal C_2 (1 + 2|\lam| t  )\, | w_0 |_{ {H}^{s}(\Om) },
	\end{equation}
 where $\mathcal C_i, i=1,2$ are the same constants as in \refl{lem:normEquiv}. 
 \smallskip 
 \item[\rm{(iii)}] For any $u, v \in L^2({\mathbb T^d}),$ we have 
	\begin{equation}\label{PhiBL2-Lip}
		\big\| \Phi^t_B  [u]  - \Phi^t_B [v] \big\| \le (1 + 2|\lam| t ) \| u - v \|.
	\end{equation}
 \end{itemize}
\end{theorem}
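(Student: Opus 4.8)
The plan is to reduce all three parts to a single pointwise estimate in the complex plane: for the scalar map $g(z) := \re^{-\ri\lambda t \ln|z|^2}\,z$ with the convention $g(0)=0$, I claim
\[
|g(z) - g(\zeta)| \le (1 + 2|\lambda| t)\,|z - \zeta|, \qquad \forall\, z, \zeta \in \mathbb{C}.
\]
Granting this, part (i) is exactly this inequality applied with $z = w_0(x)$ and $\zeta = w_0(y)$, since $\Phi_B^t[w_0](x)=g(w_0(x))$ pointwise; part (iii) is the same inequality with $z = u(x)$, $\zeta = v(x)$, after which one squares and integrates over $\mathbb{T}^d$ to pass from the pointwise bound to the $L^2$ bound. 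Thus the genuine work is entirely in the scalar estimate, and parts (ii)--(iii) are bookkeeping on top of it.

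To prove the scalar estimate I would first dispose of the degenerate case $z\zeta=0$: if, say, $\zeta=0$ then $g(\zeta)=0$ and $|g(z)-g(\zeta)|=|z|=|z-\zeta|$, which is even stronger than required (this also sidesteps the blow-up of $\ln$ at the origin). For $z,\zeta\ne 0$ I would add and subtract, factoring out the unimodular phase belonging to whichever argument has the \emph{larger} modulus. Assuming without loss of generality $|z|\ge|\zeta|$,
\[
g(z) - g(\zeta) = \re^{-\ri\lambda t \ln|z|^2}(z - \zeta) + \big(\re^{-\ri\lambda t \ln|z|^2} - \re^{-\ri\lambda t \ln|\zeta|^2}\big)\zeta.
\]
The first term has modulus exactly $|z-\zeta|$ because the prefactor is unimodular. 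For the second term I would use $|\re^{\ri a} - \re^{\ri b}| = 2\snorm{\sin\frac{a-b}{2}} \le |a-b|$, which yields the bound $2|\lambda| t\,|\zeta|\,\big|\ln|z| - \ln|\zeta|\big|$.

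The crux is then the elementary inequality $|\zeta|\,\big|\ln|z| - \ln|\zeta|\big| \le |z - \zeta|$ valid precisely under $|\zeta|\le|z|$. Setting $b=|\zeta|\le a=|z|$ and $r=a/b\ge 1$, this reduces to $\ln r \le r-1$ (equivalently, the mean value theorem for $\ln$ with the \emph{smaller} modulus as the multiplicative factor), combined with the reverse triangle inequality $a-b=\big||z|-|\zeta|\big|\le|z-\zeta|$. This is exactly where the decision to peel off the phase of the larger-modulus point is essential: the opposite choice would place the larger modulus in front of the logarithmic difference and the required inequality would fail. Collecting the two contributions gives $|g(z)-g(\zeta)|\le(1+2|\lambda|t)|z-\zeta|$, and the case $|\zeta|>|z|$ is identical by symmetry.

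For part (ii) I would invoke \refl{lem:normEquiv} twice. Its lower bound gives
\[
\mathcal{C}_1\,|\Phi_B^t[w_0]|_{H^s(\mathbb{T}^d)} \le \Big(\int_{\mathbb{T}^d}\!\int_{\mathbb{T}^d} \frac{|\Phi_B^t[w_0](x+y) - \Phi_B^t[w_0](y)|^2}{|x|^{d+2s}}\,\rd x\,\rd y\Big)^{1/2},
\]
and applying part (i) pointwise to the numerator pulls the constant $(1+2|\lambda|t)$ out of the integral, leaving the identical double integral for $w_0$; the upper bound of the lemma controls that by $\mathcal{C}_2\,|w_0|_{H^s(\mathbb{T}^d)}$. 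Dividing by $\mathcal{C}_1$ produces the stated constant $\mathcal{C}_1^{-1}\mathcal{C}_2(1+2|\lambda|t)$. I expect the scalar Lipschitz estimate—specifically the asymmetric handling of the two moduli, which dictates which phase to factor out, together with the non-differentiability of $g$ at the origin—to be the only genuinely delicate point, with parts (ii) and (iii) following routinely.
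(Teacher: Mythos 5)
Your proposal is correct and follows essentially the same route as the paper: the same decomposition factoring out the unimodular phase of the larger-modulus argument, the same elementary bounds $2|\sin\theta|\le 2|\theta|$ and $\ln(1+x)\le x$ (equivalently your $\ln r\le r-1$) to control the phase difference by the modulus difference, and the same double application of \refl{lem:normEquiv} for part (ii). Your aside that factoring out the phase of the smaller-modulus point would break the argument is a correct observation the paper leaves implicit.
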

\begin{proof}   We first prove \eqref{diffbnd}. In fact,
it  suffices  to consider  $|w_0(x)|\ge |w_0(y)|.$ If $|w_0(y)|=0,$  we have $ \Phi_{B}^{t}[w_0](y)=0,$ and
  derive from  \eqref{PhiB} that $|\Phi_{B}^{t}[w_0](x)|=|w_0(x)|,$  so \eqref{diffbnd} is valid.
  Now, assuming $|w_0(x)|\ge |w_0(y)|>0,$ we obtain
\begin{equation}\label{eq:insertest}
\begin{split}
&\big|\Phi_{B}^{t}[w_0](x)- \Phi_{B}^{t}[w_0](y)\big|=\big|\re^{-2\ri \lambda t \ln |w_0(x)| } w_0(x)- \re^{-2\ri \lambda t \ln  |w_0(y)| } w_0(y) \big|
\\ & =  \big| \re^{-2\ri \lambda t \ln |w_0(x)| } \big(w_0(x) - w_0(y) \big) + \big(\re^{-2\ri \lambda t \ln |w_0(x)| } - \re^{-2\ri \lambda t \ln |w_0(y)| } \big) w_0(y)\big|
\\ & \le  \big | w_0(x) - w_0(y) \big| + |w_0(y)|\Big| 1 - \re^{-2\ri \lambda t \ln \frac{ |w_0(x)|}{|w_0(y)| }}  \Big|
\\ & \le  \big | w_0(x) - w_0(y) \big| + 2 |w_0(y)| \Big| \sin  \Big( \lambda t \ln \Big(1 + \frac{|w_0(x)| - |w_0(y)|}{|w_0(y)|} \Big) \Big)\Big|
\\ & \le  (1 + 2|\lambda| t ) | w_0(x) - w_0(y) |,
\end{split}
\end{equation}
where we used the basic inequalities: $\sin x\le x$ and $\ln (1+x) \le x$ for $x\ge 0.$

We now turn to the proof of \eqref{eq:logPhiBc}.  Using the equivalence \refl{lem:normEquiv} and \eqref{eq:insertest}, we obtain
\begin{equation}\label{temproof}
\begin{aligned}
		\big| \Phi_{B}^{t} [w_0] \big|^2_{ {H}^{s} (\Om)} & \le  \frac 1 {{\mathcal C}_1^2} \int_{{\mathbb T^d}} \int_{{\mathbb T^d}}  \frac{ \big| \Phi_{B}^{t} [w_0]( x+ y) - \Phi_{B}^{t} [w_0]( y) \big|^2  }  {  |  x |^{d + 2s} } \rd  x \rd y
		\\&\le (1 + 2|\lambda| t)^2 \frac 1 {{\mathcal C}_1^2}   \int_{{\mathbb T^d}}\int_{\Om} \frac{ | w_0( x+y) -w_0( y)  |^2 }{ |  x |^{d + 2s} } \rd   x \rd y
		\\&\le 
  ({\mathcal C}_1^{-1} \mathcal C_2)^2\,(1 + 2 |\lam|  t)^2\,  | w_0 |_{{H}^{s}(\Om) }^2.
		\end{aligned}
\end{equation}

Finally,  
  replacing $w_0(x),w_0(y)$ in \eqref{eq:insertest}  by 
 $u(x),v(x)$, respectively, we get   
\begin{equation}\label{pstab010}
		\begin{aligned}
			| \Phi_{B}^{t}[u]( x )& - \Phi_{B}^{t} [v] ( x) | \le   \big(1 + 2|\lam| t \big)\, | u( x) - v( x)|,
		\end{aligned}
	\end{equation}
which implies \eqref{PhiBL2-Lip}.
\end{proof}

\begin{rem}\label{L2Hs-Stab}{\em Apparently, from \eqref{PhiB00}, we have the conservation: $\|\Phi_{B}^{t} [ w_0] \|=\|w_0\|.$ 
Thus, the stability \eqref{eq:logPhiBc} holds with the norm $\|\cdot\|_{ {H}^{s}(\Om)}$ in place of  the semi-norm $|\cdot|_{ {H}^{s}(\Om)}$ 
but with the constant factor $1+{\mathcal C}_1^{-1} \mathcal C_2\,(1 + 2 |\lam|  t)$. However, $\Phi_{B}^{t} [ w_0]$ is not differentiable even for smooth $w_0,$ and note that
\begin{equation}\label{eq:loggradPhiB}
	\nabla \Phi_{B}^{t} [w_0] = \re^{-\ri \lam t \ln \!| w_0 |^2  } \Big( \nabla w_0 -2\ri \lam t \frac{w_0 }{| w_0|  } \nabla |w_0| \Big),
\end{equation}
if $|w_0|\not=0.$ 
} \qed
\end{rem}
\begin{rem}\label{non-diff-A} {\em  We find from 
\eqref{diffbnd} and the derivation \eqref{temproof} that the stability result \eqref{eq:logPhiBc} is 
 valid for general non-periodic functions in 
 ${W^{s,2}(\Omega)}$ with the
Gagliardo semi-norm \eqref{defnGag},  that is, 
$$[\Phi_{B}^{t} [w_0]]_{W^{s,2}(\Omega)}\le C(1 + 2\,|\lam|\, t  ) [w_0]_{W^{s,2}(\Omega)},$$
where $C$ is a positive constant independent of $t$ and $w_0.$ 
}\qed
\end{rem}

	


\section{Low regularity analysis of the full-discretization scheme}\label{sect3:Error}
\setcounter{equation}{0}
\setcounter{lmm}{0}
\setcounter{thm}{0}

In this section, we conduct rigorous error analysis for the Lie-Trotter time-splitting scheme \eqref{eq:splititer} with spatial discretisation by the Fourier spectral method for the LogSE with low regularity solution in the fractional Sobolev space.  

 \subsection{Full-discretization scheme}
Notice from \eqref{PhiB}  that if $u_0(x)$ is periodic, then $ \Phi_{B}^{\tau}[u_0](x)$ is periodic as well. We discretize  \eqref{eq:splititer} in space by the Fourier spectral method. Define 
\begin{equation}\label{XnFourier}
    X_N^d={\rm span}\big\{\re^{\ri k\cdot x}\,:\, k\in {\mathbb K}_N^d\big\}, \quad 
    \mathbb{K}^d_N:=\big\{k \in \mathbb{Z}^{d}\,:\,  |k_i|\le N,\;  1\le i \le d\big\}.
\end{equation}
Let $\Pi_{N}^d u$ be the $L^2$-orthogonal projection  of $u$ on $X_N^d,$  that is,
\begin{equation}\label{def:Proj}
\Pi_{N}^d u (x):= \sum_{ k \in \mathbb{K}^d_N} \hat{u}_{k} \,\re^{\ri k\cdot x},\quad \hat{u}_k= \frac{1}{ |\mathbb{T}|^d}\int_{\mathbb{T}^d} u(x) \re^{-\ri  k \cdot x} \rd x.
\end{equation}


The Lie-Trotter time-splitting Fourier spectral (LTSFS) scheme for the LogSE \eqref{eq:LogSE} is to find $\big\{u_N^{m+1}(x)\in X_N^d\big\}_{m=0}^{M-1}$ recursively via
\begin{equation}\label{FourierFullS}
\begin{dcases}
u_{N}^{m+1}(x)= \Phi^{\tau}[u_{N}^{m}](x)=\Phi_A^\tau\, \Pi_{N}^d\,  \Phi_B^\tau [u_{N}^{m}](x),  & m=0,1,\ldots, M-1,\\
 u_{N}^0(x) = \Pi_{N}^d  u_0(x), & x\in {\mathbb T^d}. 
\end{dcases}
\end{equation}
 More precisely, we can obtain $u_{N}^{m+1}(x)$ from $u_{N}^{m}(x)$ by
\begin{equation}\label{NumSolComp}
u_{N}^{m+1}(x) = \sum_{k \in \mathbb{K}^d_N} \re^{-\ri |k|^2\tau}\, (\widehat{\Phi_{B}^{\tau} [u_N^m] })_k \, \re^{\ri k\cdot x},
\end{equation}
where 
\begin{equation}\label{PhiBcoef}
( \widehat{\Phi_{B}^{\tau} [u_N^m]} )_k  = \frac{1}{ |\mathbb{T}|^{d} }\int_{{\mathbb T^d}} \re^{-\ri \lam \tau \ln|u_N^m(x)|^{2}} u_N^m(x) \re^{-\ri k\cdot x}\rd x.
\end{equation}
Here the expansion coefficients in  \eqref{PhiBcoef} will be computed very accurately via numerical quadrature with negligible quadrature errors.  In fact, we find the use of over-quadrature (i.e., with the number of nodes more than $N^d$) appears necessary
for such a  nonlinear term with low regularity.  Moreover, this is a relatively simpler setting for the clarity of analysis focusing on the logarithmic  nonlinearity.

\subsection{Useful lemmas}  We make necessary preparations for the error analysis. 
We first derive some estimates for the nonlinear functions 
\begin{equation}\label{fzfepszB}
 f(z)=z\ln |z|^2,\quad  f^{\varepsilon}(z)=z\ln(|z| + \varepsilon)^2,\quad z\in \mathbb C, \;\; \varepsilon >0,  
\end{equation}
which are useful to deal with the logarithmic nonlinear term.  Such results will  play an important  role similar to  the Cazenave-Haraux (CH) property of $f$ (cf. \cite[Lemma 1.1.1] {Cazenave1980}):
\begin{equation}\label{f:CH}
		\big|  \Im \big\{( f (z_1) - f(z_2) ) (\bar{z}_1 - \bar{z}_2 )\big\} \big|  \le  2\,|z_1 - z_2|^2,\quad \forall\, z_1, z_2 \in \bC, 
\end{equation}
for the error analysis in e.g.,  \cite{Bao2019Regularized,Paraschis2023FixedpointLogSE}.

\begin{lemma}\label{Lem:fRfdiff} For any $z, z_1,z_2\in {\mathbb C}$ and $\varepsilon >0,$  we have  
\begin{subequations}\label{fzfepsz}
\begin{gather}
| f^{\varepsilon} (z) - f(z) | \le 2\, \varepsilon, 
\label{eq:ffepsdiff}\\[6pt]
 |f^{\varepsilon}(z_2) - f^{\varepsilon}(z_1)| \le 2\{ |\! \ln( \zeta + \varepsilon) | + 1 \}  |z_1 - z_2|, \label{eq:fepsstab}\\[6pt]
 | f(z_1) - f(z_2) | \le 4\,\varepsilon +  2\{ |\! \ln( \zeta + \varepsilon) | + 1 \} |z_1 - z_2|,\label{eq:fuvdiff}
\end{gather}
\end{subequations}
where $\zeta:=\max\{ |z_1|, |z_2|\}.$
\end{lemma}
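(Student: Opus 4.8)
~The plan is to establish the three inequalities in \eqref{fzfepsz} by exploiting the elementary properties of the real logarithm together with the basic bound $|\re^{\ri\theta}-1|\le|\theta|$ that is already implicit in the estimates of \reft{PhiBbnd}. The three claims are naturally ordered: \eqref{eq:ffepsdiff} is a pointwise comparison of $f^\varepsilon$ and $f$ at a single point, \eqref{eq:fepsstab} is a local Lipschitz bound for the regularized nonlinearity $f^\varepsilon$, and \eqref{eq:fuvdiff} follows by combining the first two via a triangle inequality. I would prove them in exactly this order.

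For \eqref{eq:ffepsdiff}, I would write $f^\varepsilon(z)-f(z)=z\bigl(\ln(|z|+\varepsilon)^2-\ln|z|^2\bigr)=2z\ln\!\bigl(1+\varepsilon/|z|\bigr)$ when $z\neq0$, and then use the inequality $\ln(1+x)\le x$ for $x\ge0$ to get $|f^\varepsilon(z)-f(z)|\le 2|z|\cdot\varepsilon/|z|=2\varepsilon$; the case $z=0$ is immediate since both sides vanish (with the convention $f(0)=0$). The subtle point worth stating carefully is that the factor $|z|$ in front cancels the $1/|z|$ coming from the logarithm, so the bound is uniform in $z$ and the non-differentiability at $z=0$ causes no trouble.

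For the Lipschitz estimate \eqref{eq:fepsstab}, I would split $f^\varepsilon(z_2)-f^\varepsilon(z_1)$ into a term that measures the change in the modulus-argument and a term that measures the change in the prefactor. Concretely, writing $z_j=|z_j|\re^{\ri\theta_j}$, I would first control $\bigl|\ln(|z_2|+\varepsilon)^2-\ln(|z_1|+\varepsilon)^2\bigr|$ using the fact that $t\mapsto\ln(t+\varepsilon)$ has derivative $1/(t+\varepsilon)\le1/\varepsilon$, which yields a logarithmic modulus of continuity of the type $|\ln(\zeta+\varepsilon)|+1$ after accounting for both the large-$\zeta$ and small-$\zeta$ regimes; the constant $1$ absorbs the bounded contribution from the phase difference $|\re^{\ri\theta_2}-\re^{\ri\theta_1}|$ and the near-zero behavior. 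The main obstacle, and the step I would treat most carefully, is the bookkeeping that produces the clean coefficient $2\{|\ln(\zeta+\varepsilon)|+1\}$: one must simultaneously handle the logarithmic growth of $\ln(\zeta+\varepsilon)$ for large $\zeta$ and the fact that $f^\varepsilon$ remains genuinely Lipschitz near the origin (because the regularization removes the blowup of $f'$ at $z=0$), and combine them into a single uniform constant.

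Finally, \eqref{eq:fuvdiff} follows by the triangle inequality: I would write $f(z_1)-f(z_2)=\bigl(f(z_1)-f^\varepsilon(z_1)\bigr)+\bigl(f^\varepsilon(z_1)-f^\varepsilon(z_2)\bigr)+\bigl(f^\varepsilon(z_2)-f(z_2)\bigr)$, bound the two outer differences by $2\varepsilon$ each using \eqref{eq:ffepsdiff}, and bound the middle difference by \eqref{eq:fepsstab}, which gives precisely $4\varepsilon+2\{|\ln(\zeta+\varepsilon)|+1\}|z_1-z_2|$. This last step is purely mechanical once the first two are in hand, so I expect essentially all the work to lie in \eqref{eq:fepsstab}.
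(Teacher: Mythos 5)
Your treatments of \eqref{eq:ffepsdiff} and \eqref{eq:fuvdiff} are correct and coincide with the paper's: the identity $f^{\varepsilon}(z)-f(z)=2z\ln(1+\varepsilon/|z|)$ together with $\ln(1+x)\le x$ gives the first bound, and the triangle inequality through $f^{\varepsilon}$ gives the third. The problem is \eqref{eq:fepsstab}, which you yourself identify as the heart of the lemma, and there your sketch does not contain a valid mechanism for producing the stated constant.

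Two concrete issues. First, the bound $\tfrac{\rd}{\rd t}\ln(t+\varepsilon)=\tfrac{1}{t+\varepsilon}\le\tfrac{1}{\varepsilon}$ yields a Lipschitz constant of order $\varepsilon^{-1}$ for the logarithm, not a ``logarithmic modulus of continuity of the type $|\ln(\zeta+\varepsilon)|+1$''; a factor $\varepsilon^{-1}$ would be fatal later in the paper, where $\varepsilon$ is eventually taken to be $N^{-s}$. The factor $|\ln(\zeta+\varepsilon)|$ does not come from the continuity of the logarithm at all; it comes from the other term of an add-and-subtract decomposition. Second, the polar splitting $z_j=|z_j|\re^{\ri\theta_j}$ naturally leads (via the mean value theorem applied to $t\mapsto t\ln(t+\varepsilon)$, or via the leftover term $|z_1|\ln(|z_1|+\varepsilon)(\re^{\ri\theta_2}-\re^{\ri\theta_1})$) to a constant involving $|\ln(|z_1|+\varepsilon)|$ evaluated at the \emph{smaller} modulus. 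That quantity is not controlled by $|\ln(\zeta+\varepsilon)|+1$ with $\zeta=\max\{|z_1|,|z_2|\}$: take $|z_1|$ near $0$ and $|z_2|+\varepsilon$ near $1$, so that $|\ln(\zeta+\varepsilon)|\approx 0$ while $|\ln(|z_1|+\varepsilon)|\approx|\ln\varepsilon|$ is large. The argument that actually works (and is the paper's, following Bao et al.) is: assume without loss of generality $|z_2|\ge|z_1|$, write
\[
f^{\varepsilon}(z_2)-f^{\varepsilon}(z_1)=2\big[\ln(|z_2|+\varepsilon)(z_2-z_1)+z_1\big(\ln(|z_2|+\varepsilon)-\ln(|z_1|+\varepsilon)\big)\big],
\]
bound the first term by $2|\ln(\zeta+\varepsilon)|\,|z_2-z_1|$, and for the second use $\ln(1+x)\le x$ with $x=\frac{|z_2|-|z_1|}{|z_1|+\varepsilon}$ so that the prefactor $|z_1|$ cancels the denominator: $|z_1|\frac{|z_2|-|z_1|}{|z_1|+\varepsilon}\le|z_2|-|z_1|\le|z_2-z_1|$. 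It is this pairing of the small modulus $|z_1|$ with the log-difference, and of the large modulus $|z_2|$ with the bare logarithm, that produces the clean constant $2\{|\ln(\zeta+\varepsilon)|+1\}$; your sketch does not perform it, and without it the stated bound is not reachable.
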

\begin{proof} In fact, we can find \eqref{eq:ffepsdiff} from \cite[Lemma 2.1]{Paraschis2023FixedpointLogSE}. For completeness, we provide a slightly different proof. 
 It is trivial for  $|z|=0$. For $|z|>0$, using the basic inequality: $\ln(1+ x) \le x,\, x\ge0$,  we derive
\begin{equation*}
| f^{\varepsilon}(z) - f(z) | = 2 |z| \, \big| \ln ( |z| + \varepsilon) - \ln |z| \big| = 2 |z| \ln \Big(1 + \frac{\varepsilon}{|z|} \Big) \le 2\,{\varepsilon}.
\end{equation*}

We  follow  Bao et al \cite[(2.12)]{Bao2019Regularized} to derive \eqref{eq:fepsstab}, but formulate it as a more general and tighter bound. 
In view of the symmetry, we only need to consider $|z_2|\ge |z_1|$. Direct calculation leads to 
\begin{equation}\label{fepsdiff}
\begin{split}
| f^{\varepsilon}( z_2 ) - f^{\varepsilon}( z_1 ) | 
&= 2\big|\ln(|z_2| + \varepsilon) (z_2-z_1)+ z_1 \big\{\!\ln( | z_2 | + \varepsilon) - \ln (| z_1 | + \varepsilon)\big\} \big|\\
&\le 2 |\!\ln(| z_2 |+ \varepsilon)|\, | z_2 - z_1  | + 2| z_1 | \, \big|\!\ln( | z_2 | + \varepsilon) - \ln (| z_1 | + \varepsilon) \big| \\
& = 2 |\!\ln(| z_2 |+ \varepsilon)|\, | z_2 - z_1  | +2 | z_1 |\,\Big|\!\ln\Big(1 + \frac{| z_2 | - | z_1 |}{|  z_1 | + \varepsilon} \Big) \Big| \\
& \le 2 |\!\ln(| z_2 |+ \varepsilon)|\, | z_2  - z_1  |  + 2| z_1 | \, \frac{| z_2 | - | z_1 |}{| z_1 | + \varepsilon}  \\
& \le 2 \{  |\!\ln(| z_2 |+ \varepsilon)| + 1 \} | z_2 - z_1 | =2\{ |\! \ln( \zeta + \varepsilon) | + 1 \}  |z_1 - z_2|,
\end{split}
\end{equation}
where we used the above basic inequality again, and the fact $| z_2 | - | z_1 | \le | z_2 - z_1 |$.


Finally, from the triangular inequality and \eqref{eq:ffepsdiff}--\eqref{eq:fepsstab}, we obtain 
\begin{equation*}
\begin{aligned}
|f( z_2 ) - f( z_1 )| &\le |f(z_2) - f^{\varepsilon}(z_2) | + | f^{\varepsilon}(z_2) - f^{\varepsilon} (z_1) | + |f^{\varepsilon}(z_1) - f(z_1) | \\
&\le 4\,\varepsilon +  | f^{\varepsilon}(z_2) - f^{\varepsilon} (z_1) | \\
&\le 4\, \varepsilon + 2 \{  |\!\ln( \zeta + \varepsilon)| + 1 \} \, | z_2 - z_1 |.
\end{aligned}
\end{equation*}
This ends the proof.
\end{proof}


 With the aid of Lemma \ref{Lem:fRfdiff}, we can derive the following important results valid 
 on general bounded domain $\Omega\subset {\mathbb R}^d$ including  $\Omega={\mathbb T}^d$. 
\begin{lemma}\label{fufvL2-est}
For any  $u,v\in L^\infty(\Omega)$ and $0<\varepsilon<1,$ we have
\begin{subequations}\label{L2-fzfepsz}
\begin{gather}
 \|f^{\varepsilon}(u) - f^{\varepsilon}(v)\| \le 2\Upsilon(\varepsilon) \|u - v\|, \label{eq:L2fepsstab}\\[4pt]
 \|f(u) - f(v)\| \le 4|\Omega|^{\frac 12}\varepsilon +2 \Upsilon(\varepsilon) \|u - v\|,
 \label{eq:L2fuvdiff}
\end{gather}
\end{subequations}
where 
\begin{equation}\label{UpsilonDef}
    \Upsilon(\varepsilon):= \Upsilon(\varepsilon; \|u\|_\infty, \|v\|_\infty):=
    \max\big\{|\!\ln \varepsilon|,\, \ln(\|u\|_\infty+1), \,\ln(\|v\|_\infty+1)\big\}+1. 
\end{equation}
\end{lemma}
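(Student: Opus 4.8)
The plan is to reduce both inequalities to the pointwise bounds of \refl{Lem:fRfdiff} applied at a.e.\ point $x\in\Omega$ with $z_1=u(x)$, $z_2=v(x)$, and then integrate over $\Omega$. The only nontrivial ingredient is to show that the pointwise logarithmic factor appearing there is dominated, uniformly in $x$, by the constant $\Upsilon(\varepsilon)$.

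First I would establish the key pointwise estimate
$$|\ln(\zeta+\varepsilon)|+1\le \Upsilon(\varepsilon),\qquad \zeta:=\max\{|u(x)|,|v(x)|\},$$
by a short case analysis on whether $\zeta+\varepsilon\ge 1$ or $\zeta+\varepsilon<1$. If $\zeta+\varepsilon\ge 1$, the logarithm is nonnegative and, using $\varepsilon<1$ and monotonicity of $\ln$, one has $\ln(\zeta+\varepsilon)\le\ln(\zeta+1)\le\max\{\ln(\|u\|_\infty+1),\ln(\|v\|_\infty+1)\}$, since $\zeta\le\max\{\|u\|_\infty,\|v\|_\infty\}$. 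If instead $\zeta+\varepsilon<1$, then $|\ln(\zeta+\varepsilon)|=\ln\frac{1}{\zeta+\varepsilon}\le\ln\frac1\varepsilon=|\ln\varepsilon|$, where I use $\zeta+\varepsilon\ge\varepsilon$ and $\varepsilon<1$. In either regime $|\ln(\zeta+\varepsilon)|$ is bounded by the maximum appearing in the definition \eqref{UpsilonDef} of $\Upsilon(\varepsilon)$, so adding $1$ yields the claim.

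With this in hand, for \eqref{eq:L2fepsstab} I would insert the bound into the pointwise estimate \eqref{eq:fepsstab}, obtaining $|f^{\varepsilon}(u(x))-f^{\varepsilon}(v(x))|\le 2\Upsilon(\varepsilon)\,|u(x)-v(x)|$ for a.e.\ $x$, then square, integrate over $\Omega$, and take square roots. For \eqref{eq:L2fuvdiff} I would argue identically using the pointwise bound \eqref{eq:fuvdiff}, which carries the additional constant term $4\varepsilon$; taking the $L^2$-norm and applying the triangle inequality in $L^2$ converts $\|4\varepsilon\|_{L^2(\Omega)}=4\varepsilon\,|\Omega|^{1/2}$ into the stated leading term, while the Lipschitz-type contribution integrates to $2\Upsilon(\varepsilon)\|u-v\|$.

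The only place requiring care is the uniform logarithmic bound above: the factor $|\ln(\zeta+\varepsilon)|$ can grow both as $\zeta+\varepsilon\to0$ (where it is controlled by $|\ln\varepsilon|$ because $\zeta\ge0$) and as $\zeta\to\infty$ (where it is controlled by the $L^\infty$ norms, which is exactly why the hypothesis $u,v\in L^\infty(\Omega)$ is imposed). The case split captures precisely these two competing regimes, and everything else is the routine passage from pointwise inequalities to $L^2$ inequalities.
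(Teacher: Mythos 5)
Your proposal is correct and follows essentially the same route as the paper: apply the pointwise bounds of \refl{Lem:fRfdiff}, control the factor $|\!\ln(\zeta+\varepsilon)|$ uniformly by the same two-case analysis ($\zeta+\varepsilon\ge 1$ versus $\zeta+\varepsilon<1$) that underlies the paper's estimate \eqref{logbnd}, and then pass to the $L^2$-norm, with the constant term $4\varepsilon$ contributing $4|\Omega|^{1/2}\varepsilon$. No gaps.
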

\begin{proof}  We first prove \eqref{eq:L2fepsstab}. By \eqref{eq:fepsstab}, 
 \begin{equation}\label{L2-dev1}
\|f^{\varepsilon}(u) - f^{\varepsilon}(v)\| \le  2 \big\{ \sup_{x\in \Omega}  
 |\! \ln( \zeta(x) + \varepsilon) | + 1 \big\}  \|u - v\|,
 \end{equation}
where $\zeta(x)=\max\{ |u(x)|, |v(x)|\}.$  We can show that for any $w\in L^\infty (\Omega)$ and $0<\varepsilon<1,$
\begin{equation}\label{logbnd}
 |\! \ln( |w(x)| + \varepsilon)|\le\max\!\big\{|\!\ln \varepsilon|, |\!\ln(\|w\|_\infty+\varepsilon) |\big\} \le  \max\!\big\{|\!\ln \varepsilon|, |\!\ln(\|w\|_\infty+1) |\big\}.
\end{equation}
Indeed, it is clear that if $|w(x)| + \varepsilon<1,$ then the modulus of the logarithmic function is decreasing, so we have $|\! \ln( |w(x)| + \varepsilon)|\le |\!\ln \varepsilon|.$ On the other hand, if $|w(x)| + \varepsilon\ge 1,$ it is increasing, so $|\! \ln(|w(x)| + \varepsilon)|\le |\!\ln(\|w\|_\infty+\varepsilon) |.$ Thus, the first inequality in 
\eqref{logbnd} holds. Using the same argument, we deduce that 
 \begin{equation*}\label{logbnd00}
  |\!\ln(\|w\|_\infty+\varepsilon) |\le 
 \max\big\{|\!\ln \varepsilon|, \ln(\|w\|_\infty+1) \big\}.
 \end{equation*}
Therefore,  \eqref{logbnd} is verified.   Then we can derive \eqref{eq:L2fepsstab} from \eqref{L2-dev1}-\eqref{logbnd} straightforwardly. 

Using \eqref{eq:fuvdiff} and following the same derivation for \eqref{eq:L2fepsstab} as above, we can obtain the estimate \eqref{eq:L2fuvdiff}.
\end{proof}


In the error analysis, we also need to use the following stability result in the fractional periodic Sobolev space. 
\begin{lemma}\label{Lem:RfHs}
 If $u \in L^\infty(\mathbb{T}^d )\cap H^s(\mathbb{T}^d)$ with $0\le s \le 1$, then for $0<\varepsilon<1,$
\begin{equation}\label{eq:RfHs}
| f^{\varepsilon} (u)|_{ {H}^{s}(\mathbb{T}^d ) } \le C_s
\widehat \Upsilon(\varepsilon) \, |u|_{H^s(\mathbb{T}^d)},
\end{equation}
where 
\begin{equation}\label{widehatUpsilonDef}
    \widehat \Upsilon(\varepsilon):= \widehat \Upsilon(\varepsilon; \|u\|_\infty):=
    \max\big\{|\!\ln \varepsilon|,\, \ln(\|u\|_\infty+1)\big\}+1,
\end{equation}
and  $C_s=2$ for $s=0,1$ and $C_s=
2 {\mathcal C}_1^{-1} {\mathcal C_2}$ for $s\in (0,1),$ respectively. 
\end{lemma}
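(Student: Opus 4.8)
The plan is to split the proof into three regimes according to $s$, with a common engine being the pointwise increment bound for $f^\varepsilon$ already available from Lemma \ref{Lem:fRfdiff}. Applying \eqref{eq:fepsstab} with $z_1=u(y)$ and $z_2=u(x+y)$ gives, for all $x,y\in\mathbb T^d$,
\[
\big|f^{\varepsilon}(u)(x+y) - f^{\varepsilon}(u)(y)\big| \le 2\{|\!\ln(\zeta(x,y)+\varepsilon)|+1\}\,|u(x+y) - u(y)|,
\]
where $\zeta(x,y)=\max\{|u(x+y)|,|u(y)|\}\le\|u\|_\infty$. Combining this with the uniform logarithmic bound \eqref{logbnd} (with $w=u$) yields $|\!\ln(\zeta+\varepsilon)|+1\le\widehat\Upsilon(\varepsilon)$, so the increment of $f^{\varepsilon}(u)$ is controlled by $2\,\widehat\Upsilon(\varepsilon)$ times that of $u$. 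This single inequality drives all three cases.

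For the genuinely fractional range $0<s<1$, I would follow verbatim the argument in the proof of Theorem \ref{PhiBbnd}(ii): insert the increment bound into the B\'enyi--Oh physical-space representation of the seminorm from Lemma \ref{lem:normEquiv}. Concretely,
\[
| f^{\varepsilon}(u)|_{H^s(\mathbb T^d)}^2 \le {\mathcal C}_1^{-2}\int_{\mathbb T^d}\!\int_{\mathbb T^d}\frac{|f^{\varepsilon}(u)(x+y)-f^{\varepsilon}(u)(y)|^2}{|x|^{d+2s}}\,\rd x\,\rd y \le 4\,{\mathcal C}_1^{-2}\,\widehat\Upsilon(\varepsilon)^2\int_{\mathbb T^d}\!\int_{\mathbb T^d}\frac{|u(x+y)-u(y)|^2}{|x|^{d+2s}}\,\rd x\,\rd y,
\]
and the upper bound in Lemma \ref{lem:normEquiv} turns the last integral back into ${\mathcal C}_2^2\,|u|_{H^s}^2$. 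Taking square roots produces exactly $C_s=2\,{\mathcal C}_1^{-1}{\mathcal C}_2$.

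The two endpoints are handled directly. For $s=0$ the claim is precisely the $L^2$-stability of Lemma \ref{fufvL2-est}: applying \eqref{eq:L2fepsstab} with $v\equiv0$, noting $f^{\varepsilon}(0)=0$ and $\Upsilon(\varepsilon;\|u\|_\infty,0)=\widehat\Upsilon(\varepsilon)$ (see \eqref{UpsilonDef} and \eqref{widehatUpsilonDef}), gives $\|f^{\varepsilon}(u)\|\le 2\,\widehat\Upsilon(\varepsilon)\|u\|$, i.e.\ $C_0=2$, under the convention that $|\cdot|_{H^0}$ coincides with $\|\cdot\|$ in \eqref{FSnormsemi}. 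For $s=1$, since $f^{\varepsilon}$ (unlike $f$) is Lipschitz on bounded sets and $u\in H^1\cap L^\infty$, the chain rule for compositions of Lipschitz maps with Sobolev functions gives $f^{\varepsilon}(u)\in H^1$ together with the a.e.\ identity, in the spirit of \eqref{eq:loggradPhiB},
\[
\nabla f^{\varepsilon}(u) = 2\ln(|u|+\varepsilon)\,\nabla u + 2\,\frac{u}{|u|+\varepsilon}\,\nabla|u|.
\]
Using $|\nabla|u||\le|\nabla u|$ and $|u|/(|u|+\varepsilon)\le1$ pointwise bounds the second term by $2|\nabla u|$, while \eqref{logbnd} bounds $2|\!\ln(|u|+\varepsilon)|$ by $2(\widehat\Upsilon(\varepsilon)-1)$; hence $|\nabla f^{\varepsilon}(u)|\le 2\,\widehat\Upsilon(\varepsilon)\,|\nabla u|$ a.e. Since Parseval gives $|w|_{H^1}=\|\nabla w\|$, integrating yields $C_1=2$.

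The main obstacle I anticipate lies in the $s=1$ case, specifically in justifying the gradient formula on the null set $\{u=0\}$, where $\nabla|u|$ is ill-defined: one must invoke that $\nabla u=0$ almost everywhere on the level set $\{u=0\}$ (so that $\nabla f^{\varepsilon}(u)=0$ there too) to make the chain rule and the pointwise bound rigorous across all of $\mathbb T^d$. A secondary, purely cosmetic point is the mismatch at $s=0$ between the seminorm in \eqref{FSnormsemi} and the full $L^2$-norm; this is harmless once the convention $|\cdot|_{H^0}=\|\cdot\|$ is adopted, but it should be stated explicitly.
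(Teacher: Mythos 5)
Your proposal is correct and follows essentially the same route as the paper: the pointwise increment bound \eqref{eq:fepsstab} combined with \eqref{logbnd} drives all three cases, with Lemma \ref{lem:normEquiv} handling $0<s<1$ and the explicit gradient formula handling $s=1$, exactly as in the paper's proof. The only (harmless) deviations are that you obtain the $s=0$ case by specializing Lemma \ref{fufvL2-est} to $v\equiv 0$ rather than bounding $\|f^{\varepsilon}(u)\|$ directly, and that you flag the chain-rule justification on the level set $\{u=0\}$, a technical point the paper leaves implicit.
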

\begin{proof}
For $ s=0$,  we derive from  \eqref{logbnd} directly that   
\begin{equation*}
\begin{aligned}
\| f^{\varepsilon} (u) \| &\le 2 \| \! \ln (|u| + \varepsilon) \|_\infty \, \| u \|  \le 2\max\{ |\! \ln \varepsilon |, \; \ln (\| u \|_\infty + 1) \} \| u \|. 
\end{aligned}
\end{equation*}

For $0< s < 1$, we find  from \eqref{eq:fepsstab} in \refl{Lem:fRfdiff} that 
        \begin{equation*}
         |f^{\varepsilon}(u(x+y) ) - f^{\varepsilon} (u(y)) | \le 2\{ |\! \ln(\zeta + \varepsilon ) | + 1\} | u(x+y) - u(y) |,
        \end{equation*}
where $\zeta(x,y)=\max\{|u(x)|, |u(x+y)|\}$. Then using \refl{lem:normEquiv} and \eqref{logbnd}, we derive that 
\begin{equation}\label{feps-stabA}
\begin{split}
| f^{\varepsilon} (u) |_{ {H}^{s}(\mathbb{T}^d ) } &\le \frac 1{\mathcal C_1} \Big( \int_{\mathbb{T}^d } \int_{\mathbb{T}^d } \frac{ | f^{\varepsilon} (u(x+y) )- f^{\varepsilon}(u(y) ) |^2  }{|x|^{d+2s}} \rd x \rd y\Big)^{\frac 1 2}\\
& \le 2\, {\mathcal C}_1^{-1} {\mathcal C_2} \Big\{ \sup_{x,y\in {\mathbb T^d}}| \!\ln( \zeta(x,y) +\varepsilon)|  + 1 \Big\} | u |_{ {H}^{s}( \mathbb{T}^d ) } \\
& \le 2\, {\mathcal C}_1^{-1} {\mathcal C_2} \big\{\max\{ |\! \ln \varepsilon|  , \ln(\| u \|_\infty +1)  \}+1\big\} | u |_{ {H}^{s}( \mathbb{T}^d ) }.
\end{split}
\end{equation}

Finally, for $s=1$, we obtain from the direct calculation that 
\begin{equation*}
\nabla f^{\varepsilon} (u) = 2 \ln ( |u| + \varepsilon ) \nabla u +  \frac{2 u} {| u | + \epsilon} \nabla | u |,\quad \nabla |u| = \frac{u \nabla \bar u + \bar u \nabla u}{2|u|} = \frac{ \Re\{u\nabla \bar u\} } {|u|}.
\end{equation*}
Since $| \nabla |u| | \le | \nabla u |$,  we get from \eqref{logbnd} that
\begin{equation*}
\begin{aligned}
\|  \nabla f^{\varepsilon} (u) \| &\le 2 \Big\{ \sup_{x\in \mathbb T^d}| \! \ln( | u(x) | + \varepsilon ) | + 1  \Big\}  \| \nabla u \|  \\
& \le  2 \big\{\max\{ |\! \ln \varepsilon|, \ln(\| u \|_\infty +1)\}+1\big\}  \| \nabla u \| .
\end{aligned}
\end{equation*}
This  completes the proof.
\end{proof}


Finally, we present an inverse inequality and a relevant  approximation result on Fourier expansions.  It is noteworthy that they can be obtained from the the corresponding results in one dimension. Here we sketch the derivations in the Appendix for the readers' reference, and the focus is placed on deriving sharper constants. Indeed,  the $d$-dimensional Fourier approximation can be founded in e.g., \cite[Theorem 3.1]{Pareschi2022FourierSpectralBoltzman} but with an implicit constant. 

\begin{lemma}\label{lem:PiNapprox}
 For any  $\phi \in X_N^d$ with $d\ge 1$, we have 
\begin{equation}\label{XNinftyto2}
\| \phi \|_\infty \le \Big(\frac{2N+1}{|\mathbb{T}|} \Big)^{\frac d 2} \| \phi \|.
\end{equation}
\end{lemma}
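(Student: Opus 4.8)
The plan is to bound $|\phi(x)|$ pointwise directly from the truncated Fourier expansion and then invoke Parseval, which delivers the stated constant in one stroke for every dimension $d\ge 1$. Writing $\phi(x)=\sum_{k\in \mathbb{K}^d_N}\hat\phi_k\,\re^{\ri k\cdot x}$ with coefficients supported on the finite set $\mathbb{K}^d_N$, I would first apply the Cauchy--Schwarz inequality to this finite sum, pairing the sequence $\{\hat\phi_k\}$ against the constant-modulus sequence $\{\re^{\ri k\cdot x}\}$. This gives, uniformly in $x\in\mathbb{T}^d$,
\[
|\phi(x)|\le \Big(\sum_{k\in \mathbb{K}^d_N}1\Big)^{\frac12}\Big(\sum_{k\in \mathbb{K}^d_N}|\hat\phi_k|^2\Big)^{\frac12}.
\]

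The two remaining ingredients are purely arithmetic. First, since $\mathbb{K}^d_N=\{-N,\dots,N\}^d$ is a product set, its cardinality is exactly $(2N+1)^d$, which is precisely where the one-dimensional count $2N+1$ tensorises into the $d$-th power. Second, Parseval's identity in the normalisation of \eqref{FS}--\eqref{FSnormsemi} reads $\sum_k|\hat\phi_k|^2=|\mathbb{T}|^{-d}\|\phi\|^2$. Substituting both into the display above and taking the supremum over $x$ yields $\|\phi\|_\infty\le((2N+1)/|\mathbb{T}|)^{d/2}\|\phi\|$, which is exactly \eqref{XNinftyto2}.

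There is no genuine analytic obstacle here; the only point deserving care is the \emph{explicitness and sharpness} of the constant, which is the whole purpose of recording this lemma rather than quoting an implicit bound. I would confirm optimality by exhibiting the extremiser: taking all coefficients equal, $\hat\phi_k\equiv c>0$, the function attains $\phi(0)=c(2N+1)^d$ because every phase $\re^{\ri k\cdot 0}=1$, while $\|\phi\|=|\mathbb{T}|^{d/2}(2N+1)^{d/2}c$; these combine to give equality in \eqref{XNinftyto2}. If one instead prefers the announced reduction to one dimension, the same constant follows from the reproducing-kernel representation $\phi(x)=|\mathbb{T}|^{-d}\int_{\mathbb{T}^d}\phi(y)\,D_N(x-y)\,\rd y$ with the \emph{product} Dirichlet kernel $D_N(z)=\prod_{j=1}^d\sum_{|k_j|\le N}\re^{\ri k_j z_j}$: Cauchy--Schwarz in $L^2$ together with the elementary one-dimensional computation $\int_{\mathbb{T}}\big|\sum_{|k|\le N}\re^{\ri kz}\big|^2\,\rd z=|\mathbb{T}|(2N+1)$, tensorised over the $d$ coordinates, reproduces $((2N+1)/|\mathbb{T}|)^{d/2}$ while never squaring the frequency (and so never inflating $N$ to $2N$), which is exactly why the iterated $L^\infty$ argument must be replaced by this kernel route.
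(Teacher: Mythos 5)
Your proof is correct and is essentially the paper's argument in dual form: the paper represents $\phi$ through the Dirichlet kernel $\mathbb{D}_N$ and applies Cauchy--Schwarz in $L^2(\mathbb{T}^d)$, then computes $\|\mathbb{D}_N\|$ by orthogonality, which amounts to exactly your count of $(2N+1)^d$ frequencies combined with Parseval's identity. Your coefficient-space version is, if anything, slightly more direct, and your extremiser $\hat\phi_k\equiv c$ (which is precisely the Dirichlet kernel itself) confirming that the constant is sharp is a nice observation the paper does not record.
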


\begin{lemma}\label{lem:errProj}
For any  $u \in H^s({\mathbb T^d})$ with $ 0 \le \mu \le s,$ we have
\begin{subequations}
\begin{gather}
    | u   - \Pi_N^d u |_{H^{\mu}({\mathbb T^d})}  \le   N^{\mu-s} | u |_{ H^{s}({\mathbb T^d}) }, \label{err:P_N} \\[4pt]
    | \Pi_N^d u |_{H^{s}({\mathbb T^d}) } \le | u |_{ H^{s}({\mathbb T^d}) },\label{XNprojless}
\end{gather}   
\end{subequations}
where for $\mu=0$ or $s=0,$ we understand  $|\cdot|_{H^{0}({\mathbb T^d})}=\|\cdot\|.$ 
\end{lemma}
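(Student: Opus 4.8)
The plan is to reduce both estimates to Parseval's identity in frequency space, after which they become elementary consequences of the geometry of the truncation set $\mathbb{K}_N^d$ together with the monotonicity of power functions. First I would observe that, by the definition of the projection \eqref{def:Proj}, the error $u-\Pi_N^d u$ retains precisely the Fourier modes indexed by $k\notin \mathbb{K}_N^d$, and that $0\in \mathbb{K}_N^d$, so the zero mode is always reproduced exactly and never enters the difference (which is what lets the $\mu=0$ and $s=0$ cases, where the semi-norm degenerates to $\|\cdot\|$, be treated uniformly). Hence, by the semi-norm definition \eqref{FSnormsemi} and Parseval,
\[
| u-\Pi_N^d u |_{H^{\mu}(\mathbb{T}^d)}^2 = |\mathbb{T}|^{d}\sum_{k\notin \mathbb{K}_N^d} |k|^{2\mu}\,|\hat u_k|^2,
\qquad
| \Pi_N^d u |_{H^{s}(\mathbb{T}^d)}^2 = |\mathbb{T}|^{d}\sum_{\substack{k\in \mathbb{K}_N^d\\ k\neq 0}} |k|^{2s}\,|\hat u_k|^2,
\]
with the convention $|k|^0=1$ covering the limiting exponents.

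The core of \eqref{err:P_N} is then a single observation on the cutoff set. If $k\notin \mathbb{K}_N^d$, then some coordinate satisfies $|k_i|\ge N+1$, and since the Euclidean length dominates any single coordinate, $|k|\ge |k_i|\ge N+1>N$. Writing $|k|^{2\mu}=|k|^{2(\mu-s)}|k|^{2s}$ and using that the exponent $2(\mu-s)\le 0$ makes $t\mapsto t^{2(\mu-s)}$ nonincreasing on $(0,\infty)$, I would bound $|k|^{2(\mu-s)}\le N^{2(\mu-s)}$ for every such $k$. Substituting this into the first sum above gives
\[
| u-\Pi_N^d u |_{H^{\mu}(\mathbb{T}^d)}^2 \le N^{2(\mu-s)}\,|\mathbb{T}|^{d}\sum_{k\notin \mathbb{K}_N^d} |k|^{2s}\,|\hat u_k|^2 \le N^{2(\mu-s)}\,| u |_{H^{s}(\mathbb{T}^d)}^2,
\]
and taking square roots yields \eqref{err:P_N}. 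The stability bound \eqref{XNprojless} is even simpler: since $\{k\in\mathbb{K}_N^d:k\neq 0\}\subset \mathbb{Z}_0^d$, the sum representing $| \Pi_N^d u |_{H^s}^2$ is a sub-sum of $|\mathbb{T}|^d\sum_{k\in\mathbb{Z}_0^d}|k|^{2s}|\hat u_k|^2=| u |_{H^{s}(\mathbb{T}^d)}^2$, whence $|\Pi_N^d u|_{H^s}\le |u|_{H^s}$ at once.

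There is no genuine analytic obstacle here; the only point that requires care, and the reason the Appendix derivation is worth recording, is the sharpness of the constant. The estimate is stated with the explicit factor $N^{\mu-s}$ and \emph{no} dimension-dependent prefactor, and this is exactly what the cube-shaped ($\ell^\infty$) truncation $\mathbb{K}_N^d$ buys: the bound $|k|\ge\max_i|k_i|\ge N+1$ never costs a factor of $\sqrt d$, in contrast to a ball-shaped cutoff or to the implicit-constant statements in the literature (cf.\ \cite{Pareschi2022FourierSpectralBoltzman}). I would therefore present the whole argument as this Parseval-plus-monotonicity computation, using the reduction to one dimension suggested for the Appendix only to organize the bookkeeping and to confirm that the constant is exactly one.
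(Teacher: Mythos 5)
Your argument is correct and coincides with the paper's own proof in Appendix B: both reduce to Parseval's identity in frequency space, bound $|k|^{2(\mu-s)}\le N^{2(\mu-s)}$ for $k\notin\mathbb{K}_N^d$ using $|k|>N$ and the nonpositivity of the exponent, and obtain \eqref{XNprojless} by observing that the truncated sum is a sub-sum of the full one. The only cosmetic difference is that you spell out why $|k|>N$ off the cube and why the zero mode is harmless, which the paper leaves implicit.
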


\subsection{Main result on error estimate}\label{FracMainResult}
With the above preparations, we are now ready to carry out the convergence analysis of the  Lie-Trotter splitting Fourier spectral method \eqref{FourierFullS}. For notational convenience, we denote
\begin{equation}\label{globalnotation}
\begin{split}
&u^k(x) := u(x,t_k), \quad u^k_*(x):=\Pi_N^d u^k(x), \quad 
e_*^k(x):= u^k_*(x) - \Phi^\tau [u_*^{k-1}](x),
\end{split}
\end{equation}
and 
\begin{equation}\label{globalnotation2}
|u|_{C(H^s)}:=\sup_{t\in (0,T]} |u(\cdot, t)|_{H^{s}({\mathbb T^d})},  \quad \| u \|_{C(L^\infty) }:= \sup_{t\in (0,T]} \| u(\cdot, t) \|_{ L^\infty(\mathbb{T}^d ) }.
\end{equation}
The main result is stated as follows. 
\begin{theorem}\label{THM:errestfrac}
Assume that $u_0\in L^2(\mathbb T^d)$ and   the solution $u$ of \eqref{eq:LogSE} has the regularity 
\begin{equation}\label{errest:cons}
 u \in C((0,T]; H^{s}({\mathbb T^d})\cap L^\infty( \mathbb{T}^d ) ), \quad 0 < s < 1,
 \end{equation}
 and let  $u_N^{m+1}$ be the numerical solution obtained by  \eqref{FourierFullS}. 
  Then we have the $L^2$-error estimate 
\begin{equation}\label{mainresultA}
\| u^{m+1} - u_N^{m+1} \| \le C {\re}^{2|\lambda| T}  \big({\mathcal C}(u_0,u)+\ln N\big)\,(\tau^{\frac s 2} + N^{-s})\, | u |_{ C( H^{s} ) },
\end{equation}
for $m=0,1,\ldots, M-1,$
where $C$ is a positive constant independent of $\tau,N$ and any function, and 
$${\mathcal C}(u_0,u):=\ln (\|u_0\|+1)+\ln(\|u\|_{C(L^\infty) }+1). $$
\end{theorem}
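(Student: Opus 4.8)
The strategy is the classical ``Lady Windermere's fan'' telescoping: reduce the global error to a weighted sum of one-step (local) errors, using the $L^2$-stability of the discrete propagator $\Phi^\tau=\Phi_A^\tau\Pi_N^d\Phi_B^\tau$. Since $\Phi_A^\tau$ is an $L^2$-isometry, $\Pi_N^d$ is an $L^2$-contraction, and $\Phi_B^\tau$ is Lipschitz with constant $1+2|\lam|\tau$ by \reft{PhiBbnd}(iii), the map $\Phi^\tau$ is itself $L^2$-Lipschitz with constant $1+2|\lam|\tau$. Writing, with the notation of \eqref{globalnotation}, $u^{m+1}-u_N^{m+1}=(u^{m+1}-u_*^{m+1})+(u_*^{m+1}-u_N^{m+1})$, the first part is the spatial projection error $\|u^{m+1}-\Pi_N^d u^{m+1}\|\le N^{-s}|u^{m+1}|_{H^s}$ from \eqref{err:P_N} (counted only once). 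For the second part I would insert $\Phi^\tau[u_*^m]$, obtain $u_*^{m+1}-u_N^{m+1}=e_*^{m+1}+(\Phi^\tau[u_*^m]-\Phi^\tau[u_N^m])$, and iterate; the Lipschitz bound together with $u_N^0=\Pi_N^d u_0=u_*^0$ gives, via a discrete Gronwall, $\|u_*^{m+1}-u_N^{m+1}\|\le \re^{2|\lam|T}\sum_{k=1}^{m+1}\|e_*^k\|$. Everything thus reduces to estimating the local error $\|e_*^k\|$.

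The delicate point is that $\|e_*^k\|$ must be $O(\tau)$-small (not merely $O(N^{-s})$), since it is summed over $M\sim T/\tau$ steps; otherwise the projection contribution would blow up like $N^{-s}/\tau$. Using that $\Phi_A^\tau$ commutes with $\Pi_N^d$, I would write $e_*^k=\Pi_N^d\big(u^k-\Phi_A^\tau\Phi_B^\tau[u^{k-1}]\big)+\Phi_A^\tau\Pi_N^d\big(\Phi_B^\tau[u^{k-1}]-\Phi_B^\tau[\Pi_N^d u^{k-1}]\big)$. For the second (projection) term the key observation is that the integral representation $\Phi_B^\tau[w]=w-\ri\lam\int_0^\tau f(\Phi_B^\sigma[w])\,\rd\sigma$ (valid since $|\Phi_B^\sigma[w]|=|w|$) lets me split off the leading difference $u^{k-1}-\Pi_N^d u^{k-1}$, which is purely high-frequency and hence annihilated by the outer $\Pi_N^d$. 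What survives is $-\ri\lam\,\Phi_A^\tau\Pi_N^d\int_0^\tau\big(f(\Phi_B^\sigma[u^{k-1}])-f(\Phi_B^\sigma[\Pi_N^d u^{k-1}])\big)\rd\sigma$, which by the $L^2$-Lipschitz bound \eqref{eq:L2fuvdiff} of \refl{fufvL2-est} and \reft{PhiBbnd}(iii) is $O\big(\tau(\varepsilon+\Upsilon(\varepsilon)N^{-s})\big)$. This extra factor $\tau$ is exactly what prevents the projection error from accumulating.

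It remains to bound the Lie splitting error $\|u^k-\Phi_A^\tau\Phi_B^\tau[u^{k-1}]\|$, which is the main analytic obstacle, since $f$ is only H\"older continuous and does not act boundedly on $H^s$. Comparing the Duhamel formula of the exact flow with that of $\Phi_B^\tau$ expresses this error as $-\ri\lam\int_0^\tau\big\{\re^{\ri(\tau-\sigma)\Delta}[f(u(t_{k-1}+\sigma))-f(\Phi_B^\sigma[u^{k-1}])]+[\re^{\ri(\tau-\sigma)\Delta}-\re^{\ri\tau\Delta}]f(\Phi_B^\sigma[u^{k-1}])\big\}\rd\sigma$. The first integrand is controlled in $L^2$ by \eqref{eq:L2fuvdiff}, reducing matters to $\|u(t_{k-1}+\sigma)-\Phi_B^\sigma[u^{k-1}]\|$, whose dominant part $(\re^{\ri\sigma\Delta}-I)u^{k-1}$ is $O(\sigma^{s/2})$ by \eqref{PhiA:projerrine} of \reft{PhiA:Projerr} (the rest being absorbed by a short local Gronwall). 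For the second integrand I would write $f=f^\varepsilon+(f-f^\varepsilon)$: the smooth part is estimated by the fractional smoothing bound $\|[\re^{\ri(\tau-\sigma)\Delta}-\re^{\ri\tau\Delta}]f^\varepsilon(\Phi_B^\sigma[u^{k-1}])\|\le 2^{1-s/2}\sigma^{s/2}|f^\varepsilon(\Phi_B^\sigma[u^{k-1}])|_{H^s}$ from \reft{PhiA:Projerr}, and the seminorm is bounded by \refl{Lem:RfHs} followed by the $H^s$-stability of $\Phi_B$ in \reft{PhiBbnd}(ii), producing a factor $\widehat\Upsilon(\varepsilon)$; the remainder $f-f^\varepsilon$ is $O(\varepsilon)$ in $L^2$ by \eqref{eq:ffepsdiff}. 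Since $\int_0^\tau\sigma^{s/2}\rd\sigma=O(\tau^{1+s/2})$, the Lie error is $O\big(\widehat\Upsilon(\varepsilon)\tau^{1+s/2}+\varepsilon\tau\big)$ per step.

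Finally I would assemble the pieces. The logarithmic factors $\Upsilon(\varepsilon),\widehat\Upsilon(\varepsilon)$ carry the $L^\infty$-norms: exact-solution factors are bounded by $\|u\|_{C(L^\infty)}$, while the finite-dimensional factors such as $\Pi_N^d u^{k-1}$ (in particular for the first step, where $u_0$ is only $L^2$) are bounded by the inverse inequality \eqref{XNinftyto2} of \refl{lem:PiNapprox}, $\|\Pi_N^d u^{k-1}\|_\infty\le CN^{d/2}\|u_0\|$, which is the origin of the $\ln N$ and $\ln(\|u_0\|+1)$ contributions. Choosing $\varepsilon\sim N^{-1}$ (so $\varepsilon\le N^{-s}$ and $|\ln\varepsilon|\sim\ln N$), summing the per-step bounds $O(\widehat\Upsilon(\varepsilon)\tau^{1+s/2})+O(\tau(\varepsilon+\Upsilon(\varepsilon)N^{-s}))$ over the $M\le T/\tau$ steps, and collecting the factor $\re^{2|\lam|T}$ from the Gronwall estimate yields $\|u^{m+1}-u_N^{m+1}\|\lesssim \re^{2|\lam|T}(\mathcal C(u_0,u)+\ln N)(\tau^{s/2}+N^{-s})|u|_{C(H^s)}$, as claimed. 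I expect the genuine difficulties to be (i) the non-accumulation of the projection error, resolved by the high-frequency cancellation above, and (ii) the Lie error in the low-regularity regime, resolved by the regularisation $f^\varepsilon$ together with the fractional smoothing of \reft{PhiA:Projerr} and the $H^s$-stability of $\Phi_B$.
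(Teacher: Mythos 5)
Your global skeleton coincides with the paper's: the same telescoping over time steps using the $L^2$-Lipschitz bound of $\Phi^\tau$ from \reft{PhiBbnd}(iii), the projection error counted once via \eqref{err:P_N}, and the reduction to local errors $\|e_*^k\|$ that must be $O(\tau)$-small. Where you genuinely diverge is the local-error engine: you compare Duhamel integral representations of the exact flow and of $\Phi_A^\tau\Phi_B^\tau$, whereas the paper derives the evolution equation \eqref{loc:errequ} for $\mathcal{E}_k^t=\Psi_*^t[u^k]-\Phi^t[u_*^k]$, pairs it with $\mathcal{E}_k^t$, uses the Cazenave--Haraux inequality \eqref{f:CH} on the ``diagonal'' difference $f(\Psi_*^t[u^k])-f(\Phi^t[u_*^k])$, and closes with Gr\"onwall on $(0,\tau)$. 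Your observation that $\Pi_N^d$ annihilates the leading high-frequency part of $\Phi_B^\tau[u^{k-1}]-\Phi_B^\tau[\Pi_N^du^{k-1}]$ through the integral form of $\Phi_B$ is an attractive, explicit way to see why the spatial error does not accumulate; the paper gets the same $O(\tau)$ gain implicitly because all spatial discrepancies enter only through the source terms $\mathcal T_1^t,\mathcal T_2^t$, which are integrated over $(0,\tau)$ starting from $\mathcal E_k^0=0$.

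There are, however, two concrete problems. The main one is the first time step. Your decomposition feeds the raw datum $u^0=u_0$ into $\Phi_B^\sigma[\,\cdot\,]$ and into $f(\cdot)$ (both in the Duhamel formula for the Lie error and in the commutator term), and every estimate you then invoke --- \eqref{eq:L2fuvdiff}, \refl{Lem:RfHs}, \reft{PhiBbnd}(ii) --- either carries a factor $\ln(\|u_0\|_\infty+1)$ or requires $u_0\in H^s(\mathbb T^d)\cap L^\infty(\mathbb T^d)$. Under the hypotheses of \reft{THM:errestfrac}, $u_0$ is only in $L^2(\mathbb T^d)$ and the regularity \eqref{errest:cons} is assumed on $(0,T]$ only, so $\|u_0\|_\infty$ and $|u_0|_{H^s(\mathbb T^d)}$ may be infinite and the $k=1$ local error is not controlled by your argument. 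The paper's proof is arranged precisely so that the nonlinear maps are only ever applied to $u_*^k=\Pi_N^du^k$ (whose $L^\infty$ norm is tamed by the inverse inequality \eqref{XNinftyto2}, producing the $\ln N$ and $\ln(\|u_0\|+1)$ terms) or to $\Psi^t[u^k]=u(t_k+t,\cdot)$ with $t>0$ (controlled by $\|u\|_{C(L^\infty)}$ and $|u|_{C(H^s)}$); you would need to rework your local-error split in the same spirit, at least for the first step. A secondary, milder issue: where the paper's energy argument yields the clean Gr\"onwall coefficient $2|\lambda|$ in \eqref{second-termB}--\eqref{loc:03}, your ``short local Gr\"onwall'' for $\|u(t_{k-1}+\sigma)-\Phi_B^\sigma[u^{k-1}]\|$ must rely on the Lipschitz bound \eqref{eq:L2fuvdiff}, whose constant $\Upsilon(\varepsilon)\sim\ln N$ then sits in the exponent; this leaves an extra factor $\re^{\,C|\lambda|\tau\ln N}$ (or, if you instead bound the nonlinear integrals crudely, an extra power of $\ln N$) that is absent from \eqref{mainresultA} and must be argued away, e.g.\ under the mild restriction $\tau\ln N\lesssim 1$.
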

\begin{proof} 
Using the triangle inequality and \refl{lem:errProj}, we deduce that
	\begin{equation}\label{errest:ineq1}
	\begin{split}
	\| u^{m+1} - u_{N}^{m+1}\| & \le \| u^{m+1} - u^{m+1}_* \| + \| u^{m+1}_* - u_{N}^{m+1} \| \\
 & \le N^{-s} | u^{m+1}|_{H^{s}({\mathbb T^d})} + \| u^{m+1}_* - u_{N}^{m+1} \| \\
	& \le N^{-s} |u|_{C(H^s)} + \| u^{m+1}_* - u_{N}^{m+1} \|.
       \end{split}
	\end{equation}
 Thus, it suffices to estimate $\| u^{m+1}_* - u_{N}^{m+1} \|.$ From 
 \eqref{FourierFullS}, we know that $u_{N}^{m+1}=\Phi^{\tau}[u_N^{m}],$ so 
\begin{equation*}\label{errest:ineq20}
		\begin{aligned}
		          \| u^{m+1}_* - u_{N}^{m+1} \|  & \le   \| u^{m+1}_*  - \Phi^{\tau}[u^{m}_*] \| + \|  \Phi^{\tau}[u^{m}_*] - \Phi^{\tau}[u_N^{m}] \| \\
		          & =\| e_*^{m+1} \| +\big \| \Phi_A^\tau\, \Pi_{N}^d\,(  \Phi_B^{\tau}[u^{m}_*] - \Phi^{\tau}_B[u_N^{m}])\big\|.
		\end{aligned}
	\end{equation*}
As the flow map of the linear Schr\"odinger's operator satisfies $\| \Phi_A^\tau[v_0]\|=\|v_0\|$ and  $\|\Pi_{N}^d  w \|\le \|w\|,$ we derive from Theorem \ref{PhiBbnd}-(iii) the recurrence relation
\begin{equation*}\label{err21}
		        \| u^{m+1}_* - u_{N}^{m+1} \| 
		           \le \| e_*^{m+1} \| + \alpha 
            \| u^{m}_* - u_{N}^{m} \|,\quad \alpha:=1+ 2|\lambda|\tau,
	\end{equation*}
which  implies 
 	\begin{equation}\label{errest:ineq2}
		\begin{aligned}
		          \| u^{m+1}_* - u_{N}^{m+1} \|  
		          & \le \| e_*^{m+1} \| + \alpha\big(\|  e_*^{m} \|+\alpha \| u^{m-1}_* - u_{N}^{m-1} \| \big) \\
		          & \;\;   \vdots  \\
		          & \le \| e_*^{m+1} \| + \alpha \| e_*^m \| +\cdots + \alpha^m \| e_*^1\| +  \alpha^{m+1} \| u^{0}_* - u_{N}^{0} \| \\
		          & = \sum_{k=0}^{m} \alpha^{m-k}\| e_*^{k+1}\|, 
		\end{aligned}
	\end{equation}
 where we have noted $ u^{0}_* =u_{N}^{0}=\Pi_N^d u_0.$
Thus, by  \eqref{errest:ineq1}-\eqref{errest:ineq2}, 
\begin{equation}\label{errest:maininequ}
\| u^{m+1} - u_{N}^{m+1}\| \le  N^{-s} |u|_{ C(H^s)} + \sum_{k=0}^{m} \alpha^{m-k}\| e_*^{k+1}\|.
\end{equation}
\smallskip
\emph{The rest of the proof is to show that for  $k=0,\ldots, M-1,$}
\begin{equation}\label{mainestimate}
\| e_*^{k+1}\| \le   C \tau \, {\rm e}^{2|\lambda|\tau} \big\{  {\mathcal C }(u_0,u)+\ln N\big\}\, (\tau^{\frac s 2} + N^{-s} ) \,   |u|_{ C(H^s) }.
\end{equation}
 Indeed, if \eqref{mainestimate} 
were proved, we obtain from \eqref{errest:maininequ} straightforwardly that 
\begin{equation*}\label{sumlocerr}
\begin{split}
\| u^{m+1} - u_{N}^{m+1}\|  & \le N^{-s} |u|_{ C(H^s) }  + C \tau \, 
\Big(\sum_{k=0}^m \alpha^{m-k}\Big) \\
& \quad \times {\rm e}^{2|\lambda|\tau}  \big\{  {\mathcal C }(u_0,u)+\ln N\big\}\, (\tau^{\frac s 2} + N^{-s} ) \,   |u|_{ C(H^s) }. 
\end{split}
\end{equation*}
Since 
$$
\sum_{k=0}^m \alpha^{m-k}=\sum_{k=0}^m \alpha^{k} = 
\frac{1- \alpha^{m+1} }{ 1-\alpha}=
\frac{ (1 + 2|\lambda| \tau )^{m+1}-1 }{ 2|\lambda | \tau} \le 
\frac{{\rm e}^{2(m+1)|\lambda|\tau}-1 }{ 2|\lambda |\tau},
$$
we can derive  the desired estimate  \eqref{mainresultA}, given \eqref{mainestimate}. 

\bigskip 

\noindent\underline{\bf Proof of \eqref{mainestimate}.}\,   Let $\Psi^{t}[u^k]$ be the  flow map of \eqref{eq:LogSE} with the initial value $u^k$, that is, 
\begin{equation}\label{loc:00}
\begin{dcases}
 \ri \, \partial_{t} \Psi^{t} [ u^k]  + \Delta  \Psi^{t} [ u^k ]  = \lambda\, f(\Psi^{t} [ u^k ]), \quad t\in (0,\tau], \\
 \Psi^0[u^k] = u^k.
 \end{dcases}
\end{equation}
Note that at $t=\tau,$ $\Psi^\tau [u^k] = u^{k+1}=u(\cdot, t_{k+1})$. 
 For simplicity of notation, we further introduce
\begin{equation}\label{PsiStart}
   \Psi^{t}_*[\cdot]:=\Pi_N^d \Psi^{t} [\cdot], \quad  f_*(\cdot):=\Pi_N^d f(\cdot),\quad \mathcal{E}_k^t := \Psi^t_*[ u^k]- \Phi^t[ u^k_*].
\end{equation}
As   $\Pi_N^d \Delta=\Delta \Pi_N^d,$ a direct projection of \eqref{loc:00}  leads to  
\begin{equation}\label{loc:01} 
\begin{dcases}
\ri \, \partial_{t}  \Psi^{t}_* [ u^k]  + \Delta  \Psi^{t}_* [ u^k ]  = \lambda\, f_*(\Psi^{t} [ u^k ]), \quad t\in (0,\tau], \\
 \Psi^0_*[u^k] = u^k_*.
\end{dcases}
\end{equation}
 
We next derive the  error equation of $\mathcal{E}_k^t.$  Using the definitions of 
$\Phi^t$ in 
the scheme \eqref{FourierFullS} and 
 $\Phi_A^t,\,\Phi_B^t$  in \eqref{PhiA}-\eqref{PhiB}, direct calculation leads to  
\begin{equation*}
\begin{split}
\partial_t \Phi^t[u^k_*] &= \partial_t \big\{\Phi_A^t \Pi_N^d \Phi_B^t [u^k_*] \big\}= \partial_t \big\{ {\rm e}^{\ri t \Delta } \Pi_N^d\, {\rm e}^{-\ri \lambda t \ln | u^k_* |^2 } u^k_*\big\}\\
&= \ri \Delta \Phi^t[u^k_*] - \ri \lambda\, {\rm e}^{\ri t \Delta } \Pi_N^d  \big\{  {\rm e}^{-\ri \lambda t \ln | u^k_* |^2 }  u^k_*\, \ln | u^k_*|^2  \big\}\\
&= \ri \Delta \Phi^t[u^k_*] - \ri \lambda\, {\rm e}^{\ri t \Delta } \Pi_N^d  
\big\{  \Phi_B^t[u^k_*]\,\ln | u^k_*|^2   \big\}\\
&= \ri \Delta \Phi^t[u^k_*] - \ri \lambda\, {\rm e}^{\ri t \Delta } \Pi_N^d  
\big\{  \Phi_B^t[u^k_*] \,\ln | \Phi_B^t[u^k_*] |^2  \big\}\\
& = \ri \Delta \Phi^t[u^k_*] - \ri \lambda\, \Phi_A^t \Pi_N^d f(\Phi_B^t[u^k_*]),
\end{split}
\end{equation*}
where have used  the simple fact $|u^k_*|=|\Phi_B^t[u^k_*]|.$ It immediately implies
\begin{equation}\label{loc:02}
\ri \, \partial_{t}  \Phi^{t} [ u^k_* ]  + \Delta \Phi^{t} [ u^k_*]  = \lambda\,\Phi_{A}^{t}  \Pi_N^d f(\Phi_{B}^{t} [ u^k_*] ),\quad t \in (0, \tau],
\end{equation}
with $\Phi^0[u^k_*] = u^k_*$.  Accordingly, we 
derive from \eqref{loc:01}-\eqref{loc:02} the error equation:
\begin{equation}\label{loc:errequ}
\begin{dcases}
\ri\, \partial_t \mathcal{E}_k^{t} + \Delta  \mathcal{E}_k^t = \lambda \big\{  f_*(\Psi^{t} [u^k ]) - \Phi_{A}^{t}    f_*(\Phi_{B}^{t} [ u^k_*]) \big\}, \quad t\in (0, \tau],\\
\mathcal{E}_k^0=0.
\end{dcases}
\end{equation}
In view of $\Psi^\tau [u^k] = u^{k+1},$  we notice that 
\begin{equation}\label{errest:et0}
 \mathcal{E}_k^\tau=\Psi_*^\tau[u^k] -\Phi^\tau[u^k_*] = u_*^{k+1}- \Phi^\tau[u^k_*] = e_*^{k+1},
\end{equation}  
  so we next prove $ \mathcal{E}_k^\tau $ satisfying the estimate \eqref{mainestimate}.  For this purpose, 
taking the inner product with $\mathcal{E}_k^t$ on the first equation of \eqref{loc:errequ}, the imaginary part of the resulting equation reads
\begin{equation}\label{et1AA}
\begin{split}
			\frac{1}{2} \frac{\rd} {\rd t} \| \mathcal{E}_k^t \|^2 & = \,  \lambda\, \Im \big(f_*(\Psi^{t} [ u^k ] )  -   \Phi_{A}^{t}   f_*(\Phi_{B}^{t} [ u^k_*]), {\mathcal{E}_k^t } \big) \\
			&=\, \lambda \Im\,\Big\{\big( f_*(\Psi^{t} [u^k])  - f(\Psi^{t}_* [u^k]) , \mathcal{E}_k^t ) +  
			 \big(  f (\Psi^{t}_* [ u^k] )  - f ( \Phi^t [ u^k_*] ), \mathcal{E}_k^t  \big)  \\
			  &\quad +\big(f ( \Phi^t [ u^k_* ] ) - \Phi_{A}^{t}    f_*( \Phi_{B}^{t} [u^k_*] ), \mathcal{E}_k^t  \big)\Big\}.		
\end{split}
\end{equation}
As $\mathcal{E}_k^t := \Psi^t_*[ u^k]- \Phi^t[ u^k_*],$ we use \eqref{f:CH} to deal with the second term and get  
\begin{equation*}\label{second-termA}
         \big|\Im \big\{( f (\Psi^{t}_* [ u^k] )  - f ( \Phi^t [ u^k_*] ))\cdot \bar{\mathcal{E}}_k^t\big\} \big| \le 2 |\mathcal{E}_k^t|^2,
\end{equation*}
which immediately implies 
\begin{equation}\label{second-termB}
\big|\Im\, \big(  f (\Psi^{t}_* [ u^k] )  - f ( \Phi^t [ u^k_*] ), \mathcal{E}_k^t  \big)\big|\le 2 \|\mathcal{E}_k^t\|^2.
\end{equation}
Using  the Cauchy-Schwarz inequality, we obtain from 
\eqref{et1AA}-\eqref{second-termB} that 
\begin{equation}\label{loc:03}
\begin{split}
\frac{\rd}{\rd t} \| \mathcal{E}_k^{t} \|  & \le  2 |\lambda| \,  \|\mathcal{E}_k^{t}  \| +  |\lambda|\,  \big\{\| f(\Phi^{t} [ u^k_* ]) - \Phi_{A}^{t}   f_*(\Phi_{B}^{t} [ u^k_* ]) \|   +    \|  f_*(\Psi^{t} [ u^k ])  - f( \Psi^{t}_* [ u^k ]) \|\big\}\\
&= 2 |\lambda| \,  \|\mathcal{E}_k^{t}  \|+|\lambda |\,\big\{\mathcal T_1^t+\mathcal T_2^t\big\},
\end{split}
\end{equation}
so it is necessary to estimate the two ``error terms'': $\mathcal T_1^t,\mathcal T_2^t$ for $t\in (0,\tau)$, before we apply the Gr\"onwall's inequality to  \eqref{loc:03}.

	\medskip  
	
\noindent{\bf (i)\, Estimate $\mathcal T_1^t$.}\;  Using  the triangle inequality, we further split $\mathcal T_1^t$ into the following four terms: 
\begin{equation}\label{e1toe4}
 \mathcal T_1^t=\| f(\Phi^{t} [ u^k_*]) - \Phi_{A}^{t}    f_*(\Phi_{B}^{t} [ u^k_* ]) \|  \le \mathcal T_{1,1}^t +\mathcal T_{1,2}^t+\mathcal T_{1,3}^t+\mathcal T_{1,4}^t,
\end{equation}
where
\begin{equation}\label{T1234}
\begin{split}
& \mathcal{T}^t_{1,1} := \| f(\Phi^t[ u^k_*] ) - f(\Pi_N^d \Phi_B^t [ u^k_*] ) \|,\quad \mathcal{T}^t_{1,2}:= \| f(\Pi_N^d \Phi_B^t [ u^k_* ]) - f(\Phi_B^t [ u^k_* ]) \|, \\[4pt]
& \mathcal{T}^t_{1,3}:= \| f(\Phi_B^t [ u^k_* ]) - f_*(\Phi_B^t [ u^k_* ]) \|, \quad\;\; \mathcal{T}^t_{1,4}:= \|  f_*(\Phi_B^t [ u^k_* ])  - \Phi_{A}^{t}   f_*(\Phi_{B}^{t} [ u^k_* ]) \|.
\end{split}
\end{equation}
We first  estimate $\mathcal{T}^t_{1,1}$.  Let ${\mathbb I}$ be the identity operator.  Using the  theorems and lemmas stated previously, we can obtain 
\begin{equation}\label{Pest:e1}
 \begin{aligned}
    \mathcal{T}^t_{1,1} & \le 4|{\mathbb T}|^{\frac d 2} \varepsilon + 2  \Upsilon_{\!1}^t(\varepsilon)  \, \| (  \Phi_A^t - {\mathbb I}) \Pi_N^d \Phi_B^t  [ u^k_* ] \| && {\rm (\refl{fufvL2-est})} \\
    & \le 4|{\mathbb T}|^{\frac d 2} \varepsilon+
    2^{2-\frac s 2}  t^{\frac s 2} \Upsilon_{\!1}^t(\varepsilon)  \, |\Pi_N^d \Phi_B^t[u^k_*]\big|_{H^s(\mathbb{T}^d)} && {\rm (\reft{PhiA:Projerr})}\\
    & \le 4|{\mathbb T}|^{\frac d 2} \varepsilon+
    2^{2-\frac s 2}  t^{\frac s 2} \Upsilon_{\!1}^t(\varepsilon)  \, |\Phi_B^t[u^k_*]|_{H^s(\mathbb{T}^d)}  && {\rm (\refl{lem:errProj})} \\
   & \le 4|{\mathbb T}|^{\frac d 2} \varepsilon+
    2^{2-\frac s 2}  t^{\frac s 2} \Upsilon_{\!1}^t(\varepsilon)  \,
    \mathcal{C}_1^{-1}\mathcal{C}_2\, \tilde \alpha (t)\, |u^k_*|_{H^s(\mathbb{T}^d)} && {\rm (\reft{PhiBbnd})} \\
    & \le 4|{\mathbb T}|^{\frac d 2} \varepsilon+
    2^{2-\frac s 2} \mathcal{C}_1^{-1}\mathcal{C}_2 \Upsilon_{\!1}^t(\varepsilon) \, \tilde \alpha(t)  \, t^{\frac s 2}  \,|u^k|_{H^s(\mathbb{T}^d)},  && {\rm (\refl{lem:errProj})}
   \end{aligned}
\end{equation}
where $\tilde \alpha(t):=1+2|\lambda|t$ and
\begin{equation*}
\Upsilon_{\!1}^t(\varepsilon) := 
\max\big\{|\!\ln \varepsilon|,\, \ln(\| \Phi^t[ u^k_* ] \|_{\infty}+1),\, \ln(\| \Pi_N^d \Phi_B^t [ u^k_* ]\|_{\infty}+1) \big\}+1.
\end{equation*}
We now estimate $\Upsilon_{\!1}^t(\varepsilon).$
We observe from \eqref{FourierFullS}-\eqref{NumSolComp} that $\Phi^t[u^k_*]\in X_N^d$. Using the inverse inequality \eqref{XNinftyto2} and the facts:
$\|\Phi_A^t[w]\|=\|w\|$ and $\|\Phi_B^t[w]\|=\|w\|,$ we obtain from  \refl{lem:errProj} and 
the conservation of mass: $\| u^k \| = \| u_0 \|$
that
\begin{equation}\label{PhiBLinfty1}
\begin{aligned}
\| \Phi^t[u^k_*] \|_\infty &\le C_d N^{\frac d 2}  \| \Phi^t_A\Pi_N^d \Phi_B^t[u_*^k]\| = C_d N^{\frac d 2}\| \Pi_N^d \Phi_B^t[u_*^k]\| \le C_d N^{\frac d 2} \| \Phi_B^t[u^k_*] \| \\
& = C_d N^{\frac d 2} \| u_*^k \| \le C_d N^{\frac d 2} \| u^k\|=C_d N^{\frac d 2} \| u_0\|,
\end{aligned}
\end{equation}
and similarly, 
\begin{equation}\label{PhiBLinfty2}
\begin{aligned}
\| \Pi_N^d \Phi_B^t[u^k_*] \|_\infty 
\le C_d N^{\frac d 2}  \| u_0 \|,\quad C_d:=\Big(\frac{2+N^{-1}}{|\mathbb T|}\Big)^{\frac  d 2}.
\end{aligned}
\end{equation}
Then we derive from the above that
\begin{equation}\label{Pupsilon1}
\begin{split}
\Upsilon_{\!1}^t(\varepsilon)  & \le |\! \ln \varepsilon | +   \ln (C_d N^{\frac d 2} \| u_0\| + 1 ) +   1 \\
& \le |\! \ln \varepsilon | +  \ln \big\{N^{\frac d 2}(C_d\| u_0 \|+ N^{-\frac d 2})\big\}  + 1\\
& \le |\! \ln \varepsilon | + (d/2) \ln N + \ln (C_d \| u_0 \| + 1) + 1.
\end{split}
\end{equation} 
Thus, we obtain from \eqref{Pest:e1} and the above estimates that for $0<\varepsilon<1,$
\begin{equation}\label{Pest:e1f}
\begin{aligned}
    \mathcal{T}^t_{1,1} & \le 4 | {\mathbb T} |^{\frac d 2} \varepsilon + C \tilde \alpha(t)   \big( | \! \ln \varepsilon| + (d/2) \ln N+\ln (C_d \| u_0 \| + 1) + 1\big)\,  t^{\frac s2}  | u^k |_{ H^s ({\mathbb T^d})}\\
    &\le C \big( | \! \ln \varepsilon| + \ln N+ \ln (\| u_0 \| + 1)\big)\,  t^{\frac s2}  
    |u|_{ C(H^s)}.
\end{aligned}
\end{equation}
Here, we use $C$ to denote a generic positive constant  independent of $\varepsilon, \tau, t, N$ and any function, and its dependence on other parameters (e.g., $T,d, s, |\lambda|$) can be tracked if necessary. Note that we do not carry the factor $\tilde \alpha(t)=1+2|\lambda| t,$ which actually can be bounded by $\alpha=1+2|\lambda| \tau,$ when we integrate $t\in (0,\tau).$ 

\medskip
We now deal with $\mathcal{T}^t_{1,2}$ given in \eqref{T1234}. Following the derivations in \eqref{Pest:e1}, we can obtain 
\begin{equation}\label{Pest:e2}
\begin{aligned}
\mathcal{T}^t_{1,2} & \le 4|{\mathbb T}|^{\frac d 2} \varepsilon + 2 \Upsilon_{\!2}^t(\varepsilon)  \| ( \Pi_N^d - {\mathbb I}) \Phi_B^t[ u^k_* ] \|  && {\rm (\refl{fufvL2-est})} \\ 
 & \le 4|{\mathbb T}|^{\frac d 2} \varepsilon + 2 \Upsilon_{\!2}^t(\varepsilon)  N^{-s} | \Phi_B^t [ u^k_* ] |_{H^s ({\mathbb T^d}) }  && {\rm (\refl{lem:errProj})} \\ 
& \le 4|{\mathbb T}|^{\frac d 2} \varepsilon + 2\,   \mathcal{C}_1^{-1} \mathcal{C}_2 \tilde \alpha(t)\, \Upsilon_{\!2}^t(\varepsilon)  N^{-s} 
   | u^k  |_{H^s ({\mathbb T^d}) },  && {\rm (\reft{PhiBbnd})} \\ 
\end{aligned}
\end{equation}
where 
\begin{equation}\label{Upsilon2}
\Upsilon_{\!2}^t(\varepsilon) := 
\max\big\{|\!\ln \varepsilon|,\,  \ln(\| \Pi_N^d \Phi_B^t [ u^k_* ]\|_{\infty}+1),\, \ln(\| \Phi_B^t [ u^k_* ]\|_{\infty}+1) \big\}+1.
\end{equation}
From the definition \eqref{PhiB} and   the inverse inequality \eqref{XNinftyto2}, we find readily that 
\begin{equation}\label{PhiBInftyBnd}
\| \Phi_B^t[u^k_*] \|_\infty = \| u^k_*\|_\infty \le C_d N^{\frac d 2} \| u^k\|= C_d N^{\frac d 2} \| u_0\|.
\end{equation}
Thus using \eqref{PhiBLinfty2}, we can bound  
$\Upsilon_{\!2}^t(\varepsilon)$ as with \eqref{Pupsilon1}, and derive the following bound similar to \eqref{Pest:e1f}:
\begin{equation}\label{Pest:e2f}
\mathcal{T}^t_{1,2} \le 
 C \big( | \! \ln \varepsilon| + \ln N+ \ln (\| u_0 \| + 1)\big)\, N^{-s} |u|_{C(H^s)}.
\end{equation}

\medskip

We next turn to  estimate $\mathcal{T}^t_{1,3}$ in \eqref{T1234}.  
Using the triangle inequality and aforementioned lemmas and theorem, leads to 
\begin{equation}\label{ek3ineq0}
\begin{split}
\mathcal{T}^t_{1,3} 
&\le  \| f(\Phi_B^t [ u^k_* ]) - f^{\varepsilon}(\Phi_B^t [ u^k_* ]) \| + \|   f^{\varepsilon}(\Phi_B^t [ u^k_* ]) - \Pi_N^d f^{\varepsilon}(\Phi_B^t [ u^k_* ]) \| \\
&\quad + \| \Pi_N^d (f^{\varepsilon}(\Phi_B^t [ u^k_* ]) - f(\Phi_B^t [ u^k_* ]))\|\\
& \le \| f(\Phi_B^t [ u^k_* ]) - f^{\varepsilon}(\Phi_B^t [ u^k_* ]) \| + \|   f^{\varepsilon}(\Phi_B^t [ u^k_* ]) - \Pi_N^d f^{\varepsilon}(\Phi_B^t [ u^k_* ]) \| \\
&\quad + \| f^{\varepsilon}(\Phi_B^t [ u^k_* ]) - f(\Phi_B^t [ u^k_* ])\| \qquad \qquad \qquad \qquad \quad\;  { (\rm \refl{lem:errProj} )}\\
& = 2\| f(\Phi_B^t [ u^k_* ]) - f^{\varepsilon}(\Phi_B^t [ u^k_* ]) \| + \| (  \Pi_N^d - {\mathbb I} )f^\varepsilon(\Phi_B^t [ u^k_* ]) \| \\
& \le 4 |{\mathbb T}|^{\frac d 2} \varepsilon + \| ( \Pi_N^d - {\mathbb I} )f^{\varepsilon} (\Phi_B^t[u_*^k]) \| \qquad \qquad \qquad\qquad  {( \rm \refl{Lem:fRfdiff} )}\\
& \le 4 |{\mathbb T}|^{\frac d 2} \varepsilon +  N^{-s} | f^\varepsilon(\Phi_B^t [ u^k_* ])  |_{ H^s(\mathbb{T}^d)}  \qquad \qquad \qquad\qquad \!\! {(\rm \refl{lem:errProj} )}\\
& \le 4 |{\mathbb T}|^{\frac d 2} \varepsilon+    C_s\,  \Upsilon_{\!3}^t(\varepsilon)\, N^{-s}\, |\Phi_B^t [ u^k_* ]|_{ H^s ({\mathbb T^d}) } \qquad \quad\qquad  \;\;  {(\rm \refl{Lem:RfHs})} \\
& \le 4 |{\mathbb T}|^{\frac d 2} \varepsilon +   C_s \mathcal{C}_1^{-1} \mathcal{C}_2\, \tilde \alpha(t) \,\Upsilon_{\!3}^t(\varepsilon)\, N^{-s}\, | u^k |_{ H^s ({\mathbb T^d})}, \quad\quad\;\,    {\rm (\reft{PhiBbnd})} 
\end{split}
\end{equation}
where by \eqref{PhiBInftyBnd},
\begin{equation}\label{Upsion3}
\begin{aligned}
\Upsilon_{\!3}^t(\varepsilon)&:=\max\big\{|\!\ln \varepsilon|,\, \ln(\| \Phi_B^t[u^k_*] \|_\infty+1)\big\}+1\\
&\le  |\! \ln \varepsilon| + (d/2)\ln N  + \! \ln ( C_d\| u_0\| + 1)  + 1. 
\end{aligned}
\end{equation}
Thus, similar to \eqref{Pest:e2f}, we have 
\begin{equation}\label{Pest:e3f}
\mathcal{T}^t_{1,3}  
 \le   C \big(  |\! \ln \varepsilon| + \ln N  + \ln (\| u_0 \| + 1)\big)  N^{-s}| u |_{ C(H^s)}.
\end{equation}

Finally, we  estimate $\mathcal{T}^t_{1,4}$ in \eqref{T1234}. From the property: $\|\Phi_A^t[w]\|=\|w\|,$ we immediately derive 
$\|\Phi_A^t[w]-w\|\le 2\|w\|.$ Thus, we have 
\begin{equation}\label{ek4ineq0}
\begin{aligned}
\mathcal{T}^t_{1,4} & = \| ( \Phi_A^t - {\mathbb I} ) f_*(\Phi_B^t [ u^k_* ])\|\\
& \le \| ( \Phi_A^t - {\mathbb I} )  \{ f_*(\Phi_B^t [ u^k_*]) -  f_*^{\varepsilon}(\Phi_B^t [ u^k_* ]) \} \| + \| ( \Phi_A^t - {\mathbb I} )   f_*^{\varepsilon}(\Phi_B^t [ u^k_* ]) \|  \\
& \le 2  \| f_*(\Phi_B^t [ u^k_*]) -  f_*^{\varepsilon}(\Phi_B^t [ u^k_* ]) \|  + \| ( \Phi_A^t - {\mathbb I} )   f_*^{\varepsilon}(\Phi_B^t [ u^k_* ]) \| \\
& \le 4|{\mathbb T}|^{\frac d 2}\varepsilon + \|({\mathbb I} - \Phi_A^t)   f_*^{\varepsilon}(\Phi_B^t [ u^k_* ]) \|,
\end{aligned}
\end{equation}
where we denoted $f_*^{\varepsilon}=\Pi_N^d f^{\varepsilon},$ and in the last step, we used \eqref{eq:ffepsdiff} in \refl{Lem:fRfdiff}.
Using \reft{PhiA:Projerr} and \refl{lem:errProj} and following the last three steps in \eqref{ek3ineq0}, we obtain
\begin{equation*}
\begin{aligned}
\| ( \Phi_A^t -& {\mathbb I} )   f_*^{\varepsilon}(\Phi_B^t [ u^k_* ]) \|  \le 2^{1-\frac s 2}  t^{\frac s 2} |\Pi_N^d  f^{\varepsilon}(\Phi_B^t [ u^k_*])  |_{ H^s ({\mathbb T^d}) } \\
&\le 2^{1-\frac s 2}  t^{\frac s 2} | f^{\varepsilon}(\Phi_B^t [ u^k_*]) |_{ H^s ({\mathbb T^d}) } \\
&\le 2^{1-\frac s 2}  C_s \mathcal{C}_1^{-1} \mathcal{C}_2  \tilde{\alpha}(t) \{  |\! \ln \varepsilon| + (d/2) \ln N  +  \ln (C_d\| u_0\| + 1) + 1 \}  t^{\frac s2} | u^k |_{ H^s ({\mathbb T^d}) }.
\end{aligned}
\end{equation*}
Consequently, we have 
\begin{equation}\label{Pest:e4f}
\mathcal{T}_{1,4}^t \le C \big(  |\! \ln \varepsilon| + \ln N  + \ln (\| u_0 \| + 1) \big) t^{\frac s 2} | u |_{C(H^s)}.
\end{equation}

A combination of the estimates of ${\mathcal T}^t_{1,k}$ for $k=1,2,3,4$ in \eqref{Pest:e1f}, \eqref{Pest:e2f}, \eqref{Pest:e3f} and \eqref{Pest:e4f},   leads to the bound for  $\mathcal{T}^t_1$
in \eqref{e1toe4}, that is, 
\begin{equation}\label{T1t_est}
{\mathcal T}_1^t \le  C \big(  |\! \ln \varepsilon| + \ln N  + \ln (\| u_0 \| + 1) \big) \big(t^{\frac s 2} + N^{-s}\big) | u |_{C(H^s)}. 
\end{equation}

\medskip 
	
\noindent{\bf (ii) Estimate $\mathcal{T}^t_2.$} \, 
By the triangle inequality,
\begin{equation*} \label{e0orig}
\begin{aligned}
\mathcal{T}^t_{2} & =  \|   f_*(\Psi^{t} [ u^k ])  - f( \Psi_*^{t} [ u^k ])\|  = \|  \Pi_N^d f(\Psi^t[u^k])  - f( \Psi_*^t[u^k] ) \| \\
         &\le   \| (\Pi_N^d - {\mathbb I}) \{ f(\Psi^t[u^k]) - f^{\varepsilon}(\Psi^t[u^k]) \} \| + \| (\Pi_N^d - {\mathbb I}) f^{\varepsilon}(\Psi^t[u^k]) \| 
         \\
         &\quad + \| f(\Psi^t[u^k])  - f(\Psi^t_*[u_k])\|,
         \end{aligned}
\end{equation*}
where $ \Psi^{t}_*=\Pi_N^d \Psi^{t}$ as defined in \eqref{PsiStart}. 
Using the property $\|(\Pi_N^d - {\mathbb I})w\|\le 2\|w\|$  and \refl{Lem:fRfdiff}, we can bound the first term by 
\begin{equation*}
\| (\Pi_N^d - {\mathbb I}) \{ f(\Psi^t[u^k]) - f^{\varepsilon}(\Psi^t[u^k]) \} \|  \le 2 \|  f(\Psi^t[u^k]) - f^{\varepsilon}(\Psi[u^k])  \| \le 4|\mathbb{T}|^{\frac d 2} \varepsilon. 
\end{equation*}
We further bound the second term by using \refl{Lem:RfHs} and \refl{lem:errProj} as follows
\begin{equation*}
\begin{aligned}
\| (\Pi_N^d - {\mathbb I}) f^{\varepsilon}(\Psi^t[u^k]) \| &\le  N^{-s}  | f^{\varepsilon} (\Psi^t[u^k])  |_{H^{s}( \mathbb{T}^d ) }\\
&\le  C_s \{ |\! \ln \varepsilon | + \!\ln (\| \Psi^t[u^k]\|_\infty +1 ) + 1\} N^{-s} | \Psi^t[u^k] |_{H^{s}( \mathbb{T}^d )}\\
& \le  C_s  \{ |\! \ln \varepsilon | + \!\ln ( \| u \|_{C(L^\infty)} +1 ) + 1\} N^{-s} | u |_{C(H^{s})},
\end{aligned}
\end{equation*}
where we noticed from \eqref{loc:00} that 
$\Psi^t[u^k](x) = u(t_k+t, x),$ so 
$$ \| \Psi^t[u^k] \|_\infty = \| u(t_k+t,\cdot)\|_\infty \le \| u \|_{C(L^\infty)}, 
\quad |\Psi^t[u^k]|_{H^s(\mathbb{T}^d)} \le |u|_{C( H^s)}.
$$
The third term can be bounded by  \refl{fufvL2-est} and \refl{lem:errProj}:  
\begin{equation*} 
\begin{aligned}
 \| f(\Psi^t[u^k])  - f(\Psi_*^t[u^k])\| &\le   4|\mathbb{T} |^{\frac d 2} \varepsilon +  2 \Upsilon_{\!4}^t(\varepsilon) \| \Psi^t[u^k] - \Psi^t_*[ u^k ]\|  \\
 &\le 4|\mathbb{T} |^{\frac d 2} \varepsilon + 2 \Upsilon_{\!4}^t(\varepsilon)  N^{-s}  | u |_{ C(H^s) }, 
 \end{aligned}
\end{equation*}
where by the inverse inequality \eqref{XNinftyto2}, \eqref{logbnd} and $\| \Psi^t[ u^k ] \| = \| u_0 \| $,
\begin{equation*}
\begin{aligned}
    \Upsilon_4^t(\varepsilon) &:= \max\big\{|\!\ln \varepsilon|,\, \ln(\|\Psi^t[u^k]\|_\infty+\varepsilon), \,\ln(\|\Psi_*^t[u^k]\|_\infty+\varepsilon)\big\}+1 \\
    & \le |\! \ln \varepsilon |  + \ln (\| u \|_{C(L^\infty)} + 1) + (d/2) \ln N + \ln(C_d \| u_0 \| + 1) + 1.
\end{aligned}
\end{equation*}
Thus, we deduce from the above that
\begin{equation}\label{T2t_est}
 \mathcal{T}_2^t \le  C \{ |\! \ln \varepsilon | + \ln N + \!\ln (\| u \|_{C(L^\infty)} +1) 
   + \ln (\| u_0 \| +1 )\}N^{-s}  | u  |_{ C(H^s) }.
\end{equation}

\smallskip
Finally, with the estimates \eqref{T1t_est}-\eqref{T2t_est} at our disposal, we   return to \eqref{loc:03}, and take $\varepsilon = N^{-s}$, leading to 
\begin{equation}\label{loc:0300}
\begin{split}
\frac{\rd}{\rd t} \| \mathcal{E}_k^{t} \|  
&\le  2 |\lambda| \,  \|\mathcal{E}_k^{t}  \|+|\lambda |\,\big\{\mathcal T_1^t+\mathcal T_2^t\big\}\le  2 |\lambda| \,  \|\mathcal{E}_k^{t}  \|\\
&\quad + C \{ \ln N + \ln (\| u \|_{C(L^\infty)} +1) 
   + \ln (\| u_0 \| +1 )\} \big(t^{\frac s 2}+N^{-s}\big)  | u  |_{ C(H^s) }.
\end{split}
\end{equation}
As $\mathcal{E}_k^{0}=0$ (see \eqref{loc:errequ}), we use the Gr\"onwall's inequality on $(0,\tau)$ and note $\mathcal{E}_k^{\tau}=e_*^{k+1}$ (see \eqref{errest:et0}) to obtain
$$
\| e_*^{k+1}\| \le   C\, \tau\, {\rm e}^{2|\lambda|\tau} \big\{\ln (\| u_0 \| +1 )+  \ln (\| u \|_{C(L^\infty)} +1) 
   + \ln N \big\} 
  (\tau^{\frac s 2} + N^{-s} )    \,  |u|_{ C(H^s) },
$$
which implies \eqref{mainestimate}.  This completes the proof. 
\end{proof}

As a consequence of the regularity result in \cite[Theorem 1.1]{carles_low_2023} and \reft{THM:errestfrac}, we have the following estimate for one-dimensional case. 
\begin{corollary}\label{1DHs-est} If $u_0\in H^s(\mathbb T)$ for some $\frac 1 2<s<1,$ then we have the $L^2$-estimate
\begin{equation}\label{main1D}
\| u^{m+1} - u_N^{m+1} \| \le \widetilde {\mathcal C}(\tau^{\frac s 2} + N^{-s}) \ln\! N,
\end{equation}
for $m=0,\ldots,M-1,$ where 
$\widetilde {\mathcal C}=\widetilde {\mathcal C}\,\big(|\lambda|, T, \|u_0\|_{H^s(\mathbb T)}\big)$ is a positive constant.  For $d=2,3,$ we have a similar estimate if the solution $u\in C((0,T]; L^\infty(\mathbb T)).$
\end{corollary}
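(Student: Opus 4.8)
The plan is to obtain the corollary as a direct specialisation of \reft{THM:errestfrac}, so the only real work is to verify that both components of the regularity hypothesis \eqref{errest:cons} are met and then to repackage the constants. First I would invoke the well-posedness result \cite[Theorem 1.1]{carles_low_2023}: for $u_0\in H^s(\mathbb T)$ with $0<s<1$ the LogSE \eqref{eq:LogSE} has a unique solution $u\in C(\mathbb R;H^s(\mathbb T))$. Restricting to the compact interval $[0,T]$ and using the continuity of $t\mapsto u(\cdot,t)$ in $H^s$, the quantity $|u|_{C(H^s)}=\sup_{t\in(0,T]}|u(\cdot,t)|_{H^s(\mathbb T)}$ is finite; the a priori control furnished by the well-posedness theory lets me bound it solely in terms of $\|u_0\|_{H^s(\mathbb T)}$, $T$ and $|\lambda|$. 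This secures the $H^s$-part of \eqref{errest:cons}.

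The second hypothesis to check is $u\in C((0,T];L^\infty(\mathbb T))$, and this is precisely where the restriction $s>\tfrac12$ is used: in one dimension the Sobolev embedding $H^s(\mathbb T)\hookrightarrow L^\infty(\mathbb T)$ holds exactly when $s>\tfrac12=\tfrac d2$. I would therefore write $\|u\|_{C(L^\infty)}\le C_{\mathrm{emb}}\,|u|_{C(H^s)}$ (up to the mass term, using $\|u(\cdot,t)\|=\|u_0\|$) and conclude the $L^\infty$-continuity, again with a bound controlled by $\|u_0\|_{H^s(\mathbb T)}$. With both parts of \eqref{errest:cons} in hand, \reft{THM:errestfrac} applies verbatim and yields
\[
\| u^{m+1} - u_N^{m+1} \| \le C\,\re^{2|\lambda|T}\big(\mathcal{C}(u_0,u)+\ln N\big)\,(\tau^{\frac{s}{2}}+N^{-s})\,|u|_{C(H^s)}.
\]

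It remains to collapse this into the advertised constant. The term $\mathcal{C}(u_0,u)=\ln(\|u_0\|+1)+\ln(\|u\|_{C(L^\infty)}+1)$ is independent of both $N$ and $\tau$ and is bounded in terms of $\|u_0\|_{H^s(\mathbb T)}$ by the two steps above, as is $|u|_{C(H^s)}$. For $N\ge 2$ one has $\mathcal{C}(u_0,u)+\ln N\le\big(1+\mathcal{C}(u_0,u)/\ln 2\big)\ln N$, so the logarithmic prefactor is absorbed into a single $\ln N$ times a data-dependent constant. Gathering $C$, $\re^{2|\lambda|T}$, this absorbed factor and $|u|_{C(H^s)}$ into one constant $\widetilde{\mathcal C}=\widetilde{\mathcal C}(|\lambda|,T,\|u_0\|_{H^s(\mathbb T)})$ gives exactly \eqref{main1D}.

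For the higher-dimensional assertion $d=2,3$ I would run the identical argument on $\mathbb T^d$, where the $H^s$-regularity $u\in C(\mathbb R;H^s(\mathbb T^d))$ is again supplied by result (i); the only step that breaks is the $L^\infty$-bound, since $s<1\le d/2$ rules out the embedding $H^s(\mathbb T^d)\hookrightarrow L^\infty(\mathbb T^d)$. Consequently $\|u\|_{C(L^\infty)}<\infty$ can no longer be deduced and must be imposed as the stated hypothesis $u\in C((0,T];L^\infty)$, after which \reft{THM:errestfrac} closes the estimate as before. The main obstacle, and the only place the dimension enters, is thus this tension between the Sobolev threshold $s>d/2$ needed for the $L^\infty$ control and the structural constraint $s<1$ built into the fractional-regularity framework; every remaining step is dimension-agnostic bookkeeping.
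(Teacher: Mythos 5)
Your proposal is correct and follows essentially the same route as the paper: invoke the regularity result of \cite{carles_low_2023} to get $u\in C(\mathbb R;H^s(\mathbb T))$ with $\|u\|_{C(H^s)}\lesssim \|u_0\|_{H^s(\mathbb T)}$, use the embedding $H^s(\mathbb T)\hookrightarrow L^\infty(\mathbb T)$ for $s>\tfrac12$ to secure the $L^\infty$ hypothesis, and then apply \reft{THM:errestfrac}. The only addition is your explicit bookkeeping that absorbs $\mathcal C(u_0,u)+\ln N$ into $\widetilde{\mathcal C}\ln N$ for $N\ge 2$, which the paper leaves implicit.
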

\begin{proof}  From  \cite{carles_low_2023}, we know that if $u_0\in H^s(\mathbb T^d),$ then the LogSE has a unique solution 
$u \in C(\mathbb{R}; H^s(\mathbb T^d))$ such that  
$\|u\|_{C(H^s)}\le C \|u_0\|_{H^s(\mathbb T^d)}.$  We know from the standard imbedding property that
 $H^s(\mathbb T^d)\hookrightarrow L^\infty (\mathbb T^d)$ if $s>\frac d 2.$  Then the estimate \eqref{main1D} follows from \eqref{mainresultA} immediately.  
 For $d=2,3,$ we require 
 $u\in C((0,T]; L^\infty(\mathbb T))$ to ensure the constant in the logarithmic factor to be bounded. 
\end{proof}

\section{Error estimate for  $s=1$}\label{sec:H1mainResult}
\setcounter{equation}{0}
\setcounter{lmm}{0}
\setcounter{thm}{0}
\setcounter{cor}{0}

In \reft{THM:errestfrac}, we analyzed the convergence of the   LTSFS scheme \eqref{FourierFullS} when $u_0 \in L^2(\mathbb{T}^d)$ and the solution $u$ to \eqref{eq:LogSE} has certain fractional Sobolev regularity.  Remarkably, it was shown that if  $u_0 \in H^1( \mathbb{T}^d ),$  the solution $ u\in C(\mathbb{R}, H^1( \mathbb{T}^d ))$ 
(see \cite[Theorem 2.3]{carles_low_2023} and (ii) in the introductory section).  A natural question that arises is whether we can improve the $L^2$-estimate  \eqref{mainresultA} in \reft{THM:errestfrac} for $0<s<1$
to $s=1$ given the higher regularity. However, this cannot be obtained from the limiting process $s\to 1^-$ and  the main reason that the limit cannot pass is 
$\Phi_B^t[w_0]\notin H^1(\mathbb{T}^d)$ 
(see Remark \ref{L2Hs-Stab}) and $f\notin H^1(\mathbb{T}^d)$. Correspondingly, some results used for the proof of \reft{THM:errestfrac} are not valid for $s=1$
(e.g., \eqref{eq:logPhiBc}). We next take an alternative path to bypass the non-differentiability of  $\Phi_B^t[w_0]$ and $f.$

\begin{theorem}\label{THM:errestH1}
Let $u_0 \in L^2(\mathbb{T}^d)$ and  assume  the solution $u$ of \eqref{eq:LogSE} has the regularity \eqref{errest:cons} with $s=1.$
Then the error estimate \eqref{mainresultA} holds for $s=1,$ that is, 
\begin{equation}\label{errestH1}
\| u^{m+1} - u_N^{m+1} \| \le C {\re^{2|\lambda|T}}  \big( \mathcal{C}(u_0, u)+ \ln N  \big) (\sqrt{\tau} + N^{-1})\, | u |_{ C(H^1) },
\end{equation}
for $m=0,1,\cdots, M-1$, where $ \mathcal{C}(u_0,u)$ is the same as in \reft{THM:errestfrac}. 
\end{theorem}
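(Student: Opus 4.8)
The plan is to re-run the proof of \reft{THM:errestfrac} verbatim for its structural skeleton, and to localize every modification to the places that exploited the fractional $H^s$-stability of $\Phi_B^t$. The reduction \eqref{errest:ineq1}--\eqref{errest:ineq2} of the global error to the telescoping sum $\sum_{k=0}^m\alpha^{m-k}\|e_*^{k+1}\|$ uses only the $L^2$-Lipschitz bound \reft{PhiBbnd}(iii) and the projection estimate \refl{lem:errProj}, both valid at $s=1$; likewise the passage to the error equation \eqref{loc:errequ} for $\mathcal{E}_k^t=\Psi_*^t[u^k]-\Phi^t[u_*^k]$, the energy identity, and the treatment of the middle inner-product term via the Cazenave--Haraux inequality \eqref{f:CH} are all independent of $s$. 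Thus it again suffices to establish the local bound \eqref{mainestimate} with $s=1$ and $\tau^{1/2}$ in place of $\tau^{s/2}$, and for this I only need to re-derive the estimates of $\mathcal T_1^t$ and $\mathcal T_2^t$ entering \eqref{loc:03}.

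The only genuine obstruction is that at $s=1$ the bound \eqref{eq:logPhiBc} of \reft{PhiBbnd}(ii) is unavailable, since $\Phi_B^t[u_*^k]\notin H^1(\Om)$ even for the trigonometric polynomial $u_*^k$ (see Remark~\ref{L2Hs-Stab}). My plan is to bypass this by inserting the regularized flow map $\Phi_B^{t,\varepsilon}[w_0]=\re^{-\ri\lam t\ln(|w_0|+\varepsilon)^2}w_0$ wherever the original argument needed the $H^1$-regularity of $\Phi_B^t$. The two facts I would rely on are the pointwise difference $|\Phi_B^{t,\varepsilon}[w_0]-\Phi_B^t[w_0]|\le 2|\lam|t\,\varepsilon$, hence an $O(\varepsilon)$ bound in $L^2(\Om)$, and the $\varepsilon$-uniform $H^1$-stability $|\Phi_B^{t,\varepsilon}[w_0]|_{H^1(\Om)}\le(1+2|\lam|t)\,|w_0|_{H^1(\Om)}$; the latter holds because in the gradient formula the singular term $\nabla|w_0|$ is multiplied by $w_0/(|w_0|+\varepsilon)$, whose modulus is strictly below $1$, so $\varepsilon$ tames exactly the non-differentiability that kills $\Phi_B^t$. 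Since $|\Phi_B^{t,\varepsilon}[w_0]|=|w_0|$, all the $L^\infty$-bounds obtained from the inverse inequality \refl{lem:PiNapprox} and mass conservation (which supply the $(d/2)\ln N$ contribution to the logarithmic factors) persist unchanged.

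With this device the four pieces of $\mathcal T_1^t$ carry over as follows. In $\mathcal T_{1,1}^t,\mathcal T_{1,2}^t,\mathcal T_{1,3}^t$ I would split $\Phi_B^t[u_*^k]=(\Phi_B^t-\Phi_B^{t,\varepsilon})[u_*^k]+\Phi_B^{t,\varepsilon}[u_*^k]$: the regularization gap contributes an $O(\varepsilon)$ term, absorbing the operators $\Phi_A^t-\mathbb I$ and $\Pi_N^d-\mathbb I$ which are bounded by $2$ on $L^2$, while on the regularized piece I apply \reft{PhiA:Projerr} with $r=1$ (yielding the $t^{1/2}$ rate), \refl{lem:errProj}, and the $\varepsilon$-uniform $H^1$-stability above to obtain a bound $\lesssim\sqrt{t}\,(1+2|\lam|t)\,|u^k|_{H^1(\Om)}$. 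For $\mathcal T_{1,4}^t=\|(\Phi_A^t-\mathbb I)\Pi_N^d f(\Phi_B^t[u_*^k])\|$ I would pass to the doubly regularized object $f^\varepsilon(\Phi_B^{t,\varepsilon}[u_*^k])$, using $\|f-f^\varepsilon\|=O(\varepsilon)$ from \eqref{eq:ffepsdiff} and the Lipschitz bound \eqref{eq:fepsstab} of $f^\varepsilon$ against the $O(\varepsilon)$ flow-map gap, and then bound $|f^\varepsilon(\Phi_B^{t,\varepsilon}[u_*^k])|_{H^1(\Om)}$ by \refl{Lem:RfHs} at $s=1$ followed by the $H^1$-stability of $\Phi_B^{t,\varepsilon}$. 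The term $\mathcal T_2^t$ needs no new idea, since $\Psi^t[u^k]=u(\cdot,t_k+t)$ is the exact solution and lies in $H^1(\Om)$ by hypothesis \eqref{errest:cons}, so \refl{Lem:RfHs} and \refl{lem:errProj} at $s=1$ apply to it directly, exactly as in \reft{THM:errestfrac}.

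The delicate point I expect to be the crux is the bookkeeping of the $\varepsilon$-dependent logarithmic factors. Each use of \refl{Lem:fRfdiff}, \refl{fufvL2-est} or \refl{Lem:RfHs} produces a factor of size $|\!\ln\varepsilon|+(d/2)\ln N+\mathcal{C}(u_0,u)$, which multiplies both the $O(\varepsilon)$ regularization errors and the genuine $O(t^{1/2}+N^{-1})$ terms. Choosing $\varepsilon=N^{-1}$ balances these: the products $\varepsilon\,|\!\ln\varepsilon|$ and $N^{-1}\ln N$ are of the same order, the factor $|\!\ln\varepsilon|=\ln N$ merges into the logarithmic prefactor, and one arrives at $\mathcal T_1^t+\mathcal T_2^t\le C(\mathcal{C}(u_0,u)+\ln N)(t^{1/2}+N^{-1})|u|_{C(H^1)}$. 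Feeding this into \eqref{loc:03} and applying Gr\"onwall on $(0,\tau)$ with $\mathcal{E}_k^0=0$, the identity \eqref{errest:et0} gives \eqref{mainestimate} at $s=1$, and summing the geometric series as in \reft{THM:errestfrac} yields \eqref{errestH1}.
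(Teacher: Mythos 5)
Your proposal is correct and follows essentially the same route as the paper: the same reduction to the local error, the same insertion of the regularized flow map $\Phi_B^{t,\varepsilon}$ with the two key facts $|\Phi_B^{t,\varepsilon}[w_0]-\Phi_B^t[w_0]|\le 2|\lambda|t\varepsilon$ and the $\varepsilon$-uniform bound $\|\nabla\Phi_B^{t,\varepsilon}[w_0]\|\le(1+2|\lambda|t)\|\nabla w_0\|$, the same double regularization $f^\varepsilon(\Phi_B^{t,\varepsilon}[\cdot])$ for the terms where $f$ composes with the flow map, and the same choice $\varepsilon=N^{-1}$. The only cosmetic difference is that the paper groups $\mathcal T_{1,3}^t$ with $\mathcal T_{1,4}^t$ (both need the Lipschitz bound of $f^\varepsilon$ to absorb the flow-map gap) rather than with $\mathcal T_{1,1}^t,\mathcal T_{1,2}^t$, but your description of the technique covers this.
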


\begin{proof} The proof follows the same line as that of  \reft{THM:errestfrac}, but needs care to deal with the derivations involving 
$|\Phi_B^t[\cdot]|_{H^s(\mathbb{T}^d)}$ and 
$|f(\cdot)|_{H^s(\mathbb{T}^d)}.$ Indeed, following the steps in the proof of \reft{THM:errestfrac}, we find the first 
one is in the second line of \eqref{Pest:e1}, so we bound the following term in $\mathcal{T}_{1,1}^t$  differently  as follows 
\begin{equation}\label{newdev1}
\begin{aligned}
    \| ( \Phi_A^t - &\mathbb{I}) \Pi_N^d \Phi_B^t  [ u^k_* ] \| \le  \| ( \Phi_A^t - \mathbb{I} ) \Pi_N^d \big( \Phi_B^t  [ u^k_* ] ) - \Phi_B^{t,\varepsilon}  [ u^k_* ]  \big) \| + \|  (\Phi_A^t - \mathbb{I} ) \Pi_N^d \Phi_B^{t,\varepsilon}  [ u^k_* ] \|, 
\end{aligned}
\end{equation}
where the regularised 
$
\Phi_B^{t,\varepsilon}[w_0] := \re^{-\ri \lambda t  \ln( |w_0| + \varepsilon)^2 } w_0.
$
We can verify readily that 
\begin{equation} \label{RPhiBdiff}
\begin{split}
  |  \Phi_{B}^{t,\varepsilon}  [ w_0 ] - \Phi_{B}^{t}  [ w_0 ]  | 
  \le 2 |\lambda | t \,\varepsilon,
  \end{split}
\end{equation}
which is trivial for $|w_0|=0,$ and for $|w_0|\not =0,$
\begin{equation*}
\begin{split}
| \Phi_{B}^{t,\varepsilon} [w_0] - \Phi_{B}^{t} [w_0] | 
& =2 | w_0 | \Big|\sin \Big(\lambda t \ln \Big(\frac{ |w_0| + \varepsilon}{ |w_0|} \Big) \Big) \Big|  \le 2 | \lambda |\,  t\, | w_0 |\, \ln \Big( 1 + \frac{\varepsilon}{|w_0|} \Big),
\end{split}
\end{equation*}
so we can get \eqref{RPhiBdiff} using $\ln(1+x)\le x$ for $x\ge 0$. Moreover, we need to use  the  property: 
\begin{equation}\label{rPhiBH1}
\|\nabla \Phi_B^{t,\varepsilon}[w_0]\| \le  (1+ 2|\lambda|t )  \|\nabla w_0 \|,
\end{equation}
which follows directly from (see \eqref{eq:loggradPhiB})
\begin{equation*}
\nabla \Phi_B^{t,\varepsilon}[w_0] = {\rm e}^{-\ri \lambda t \ln ( | w_0 | + \varepsilon)^2}\Big(\nabla w_0  - 2\ri \lambda t  \frac{ w_0}{ | w_0 | + \varepsilon } \nabla |w_0|\Big), 
\end{equation*}
and the fact $|\nabla | w_0| | \le  |\nabla w_0 |.$ 
With these two intermediate results, we can now estimate  the two terms in \eqref{newdev1}: 
\begin{equation*}
\begin{aligned}
    \| ( \Phi_A^t - &\mathbb{I}) \Pi_N^d \Phi_B^t  [ u^k_* ] \| \le  \| ( \Phi_A^t - \mathbb{I} ) \Pi_N^d \big( \Phi_B^t  [ u^k_* ] ) - \Phi_B^{t,\varepsilon}  [ u^k_* ]  \big) \| + \|  (\Phi_A^t - \mathbb{I} ) \Pi_N^d \Phi_B^{t,\varepsilon}  [ u^k_* ] \|  \\
    & \le 2 \| \Pi_N^d \big( \Phi_B^t  [ u^k_* ]  - \Phi_B^{t,\varepsilon}  [ u^k_* ]  \big) \| + \sqrt{2} \, t^{\frac12} \big| \Pi_N^d \Phi_B^{t,\varepsilon}  [ u^k_* ] \big|_{H^1(\mathbb{T}^d) } \qquad {(\rm \reft{PhiA:Projerr}) } \\
    & \le 2\| \Phi_B^t  [ u^k_* ]  - \Phi_B^{t,\varepsilon}  [ u^k_* ]  \| + 
    \sqrt{2} \, t^{\frac12} \big|  \Phi_B^{t,\varepsilon}  [ u^k_* ] \big|_{H^1(\mathbb{T}^d) } \qquad\qquad\qquad { (\rm \refl{lem:errProj}) }\\
    & \le  4 |\mathbb{T}|^{ \frac{d}{2} }|\lambda| t \varepsilon + \sqrt{2} \tilde{\alpha}(t) t^{\frac12} |u_*^k|_{H^1( \mathbb{T}^d )} \qquad \qquad \qquad\qquad\qquad\quad\,\,  (\eqref{RPhiBdiff}\; \&\; \eqref{rPhiBH1})  \\
    & \le  4 |\mathbb{T}|^{ \frac{d}{2} } |\lambda| t \varepsilon + \sqrt{2} \tilde{\alpha}(t) t^{\frac12} |u^k|_{H^1( \mathbb{T}^d )}. \qquad \qquad \,\qquad\qquad\qquad \quad { (\rm \refl{lem:errProj} ) }
\end{aligned}
\end{equation*}
It is seen that the key is to shift the differentiation to the regularized $\Phi_B^{t,\varepsilon}  [ u^k_* ]$ with an extra $\varepsilon$-term. 
Then the estimate of $\mathcal{T}_{1,1}^t$ in \eqref{Pest:e1f} becomes 
\begin{equation}\label{H1:T11}
\begin{split}
\mathcal{T}_{1,1}^t
& \le C \{ |\! \ln \varepsilon| + \ln N + \ln(\| u_0 \| +1)  \}  t^{\frac 12}\, |u|_{C(H^1)}.
\end{split}
\end{equation}

The same issue happens to  $\mathcal{T}_{1,2}^t$, i.e.,  the second line of \eqref{Pest:e2}. Using \refl{lem:errProj} and \eqref{RPhiBdiff}-\eqref{rPhiBH1} again, we can obtain  
\begin{equation*}
\begin{aligned}
    \| ( \Pi_N^d - \mathbb{I}  ) \Phi_B^t [u^k_*] \| &\le \| (\Pi_N^d - \mathbb{I} )( \Phi_B^t[  u^k_*  ] -  \Phi_B^{t,\varepsilon}[  u^k_* ] )   \| + \| ( \Pi_N^d - \mathbb{I} ) \Phi_B^{t,\varepsilon}[ u^k_* ] \| \\
& \le  4|\mathbb{T}|^{\frac d 2} t \varepsilon +   N^{-1} | \Phi_B^{t,\varepsilon} [u^k_*] |_{ H^1( \mathbb{T}^d ) } \\
& \le 4|\mathbb{T}|^{\frac d 2} t \varepsilon +  \tilde{\alpha}(t) N^{-1} | u^k |_{H^1(\mathbb{T}^d)}.
\end{aligned}
\end{equation*}
With this change, we can update the bound of $\mathcal{T}_{1,2}^t$ in  \eqref{Pest:e2f}  as  
\begin{equation}\label{H1:T12}
\begin{split}
\mathcal{T}_{1,2}^t 
& \le  C\{ |\! \ln \varepsilon | + \ln N + \ln (\| u_0 \| + 1) \}  N^{-1}  | u |_{C(H^1)}.
\end{split}
\end{equation}

The situation with $\mathcal{T}_{1,3}^t, \mathcal{T}_{1,4}^t$ is slightly different, where $\Phi_B^t[u^k_*]$ appears in $f^\varepsilon(\Phi_B^{t} [u^k_*]),$ so we follow the same argument by inserting  $f^\varepsilon(\Phi_B^{t,\varepsilon} [u^k_*]).$ 
More precisely, we extract from \eqref{ek3ineq0} the following term and use  \refl{lem:errProj}, \refl{fufvL2-est} and \eqref{RPhiBdiff}-\eqref{rPhiBH1} to derive  
\begin{equation*}
\begin{aligned}
     \| ( \Pi_N^d - \mathbb{I} ) f^\varepsilon(\Phi_B^{t} [u^k_*] ) \| &\le 
     \| ( \Pi_N^d - \mathbb{I}  ) \{ f^\varepsilon(\Phi_B^t [ u^k_* ]) - f^\varepsilon(\Phi_B^{t,\varepsilon} [ u^k_* ] ) \} \|  +  \| ( \Pi_N^d - \mathbb{I} ) f^\varepsilon(\Phi_B^{t,\varepsilon} [u^k_*] ) \|  \\
     &\le 4 \Upsilon_3^t(\varepsilon) \| \Phi_B^{t} [u^k_*] - \Phi_B^{t,\varepsilon} [u^k_*] \| + 2 \Upsilon_3^t(\varepsilon) N^{-1} | \Phi_B^{t,\varepsilon}[u_*^k] |_{H^1(\mathbb{T}^d)}  \\
     & \le 2  \Upsilon_3^t(\varepsilon)\big\{ 4 |\mathbb{T}|^{\frac d2}|\lambda| t \varepsilon + \tilde{\alpha}(t) N^{-1} |u^k|_{H^1(\mathbb{T}^d)}\big\},
\end{aligned}
\end{equation*}
where we noted  $\| \Phi_B^{t,\varepsilon}[u_*^k] \|_\infty = \| \Phi_B^t[u_*^k] \|_\infty$.
Thus,  \eqref{Pest:e3f} is valid for $s=1,$ that is,  
\begin{equation}\label{H1:T13}
\begin{aligned}
\mathcal{T}_{1,3}^t 
& \le C\{ |\ln \varepsilon| +  \ln N + \ln( \| u_0 \| + 1)\} N^{-1} | u |_{C(H^1)}.
\end{aligned}
\end{equation}
Similarly, we re-estimate the following term in  $\mathcal{T}_{1,4}^t$ (see \eqref{ek4ineq0}) by using \reft{PhiA:Projerr}, \refl{fufvL2-est} and \eqref{RPhiBdiff}-\eqref{rPhiBH1} as follows
\begin{equation*} 
\begin{aligned}
   \| ( \Phi_A^t - \mathbb{I} ) \Pi_N^d  f^{\varepsilon}(\Phi_B^t [ u^k_*]) \| &\le \| ( \Phi_A^t -\mathbb{I} ) \Pi_N^d \{ f^{\varepsilon}(\Phi_B^t [ u^k_*] )  - f^{\varepsilon}(\Phi_B^{t,\varepsilon} [ u^k_*])  \}\|  \\
&\quad + \| (\Phi_A^t - \mathbb{I} ) \Pi_N^d f^{\varepsilon}(\Phi_B^{t,\varepsilon} [ u^k_*])  \| \\
& \le 4 \Upsilon_3^{t}(\varepsilon)  \| \Phi_B^t [ u^k_*] - \Phi_B^{t,\varepsilon} [ u^k_*] \| + \sqrt{2}\,  t^{\frac 12}  | f^{\varepsilon}(\Phi_B^{t,\varepsilon} [ u^k_*])|_{H^1(\mathbb{T}^d)} \\
& \le 2 \Upsilon_3^{t}(\varepsilon) \big\{ 4 |\mathbb{T}|^{\frac d2} |\lambda| t \varepsilon + \sqrt{2} \tilde{\alpha}(t) |u^k|_{ H^1({\mathbb T^d}) } \big\},
\end{aligned}
\end{equation*}
so \eqref{Pest:e4f} holds for $s=1,$ namely, 
\begin{equation}\label{H1:T14}
\begin{aligned}
\mathcal{T}_{1,4}^t 
& \le C \{ |\! \ln \varepsilon| + \ln N + \ln(\| u_0 \| +1 ) \} t^{\frac 12} | u^k |_{C(H^1)}.
\end{aligned}
\end{equation}
Consequently, we obtain from \eqref{H1:T11}-\eqref{H1:T14} that 
\begin{equation}
    \mathcal{T}_1^t \le C \{ |\! \ln \varepsilon| + \ln N + \ln(\| u_0 \| +1 ) \} (t^{\frac 12} + N^{-1}) | u |_{C(H^1)}.
\end{equation}



It is important to notice that the estimate of $\mathcal{T}_2^t$ only involves the  regularity of the solution $u.$ In other words, the bound of $\mathcal{T}_2^t$ in  \eqref{T2t_est} is  valid for $s=1$.

With the updated bounds of  $\mathcal{T}_1^t+\mathcal{T}_2^t$ with $s=1$ in  \eqref{loc:03} and  \eqref{loc:0300}, we follow the same process to obtain the desired estimate. 
\end{proof}

Thanks to \reft{THM:errestH1} and the regularity result in  \cite[Theorem 2.3]{carles_low_2023} (also see (ii) in the introductory section), we have the following one-dimensional estimate similar to that in  Corollary \ref{1DHs-est}. 
\begin{corollary}\label{1DH1-est} If $u_0\in H^1(\mathbb T),$  then we have the $L^2$-estimate
\begin{equation}\label{main1D-H1}
\| u^{m+1} - u_N^{m+1} \| \le \widehat {\mathcal C}\, (\sqrt{\tau} + N^{-1}) \ln\! N,
\end{equation}
for $m=0,\ldots,M-1,$ where 
$\widehat {\mathcal C}=\widehat {\mathcal C}\,\big(|\lambda|, T, \|u_0\|_{H^1(\mathbb T)}\big)$ is a positive constant.  For $d=2,3,$ we have a similar estimate if the solution $u\in C((0,T]; L^\infty(\mathbb T)).$
\end{corollary}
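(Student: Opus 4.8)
The plan is to obtain Corollary~\ref{1DH1-est} as an immediate consequence of \reft{THM:errestH1} combined with the global $H^1$ well-posedness recorded as item~(ii) in the introduction. First I would invoke \cite[Theorem 2.3]{carles_low_2023}: for $u_0\in H^1(\mathbb T^d)$ the LogSE has a unique solution $u\in C(\mathbb R; H^1(\mathbb T^d))$, and from the associated a~priori bound one extracts $\|u\|_{C(H^1)}\le C(|\lambda|,T,\|u_0\|_{H^1(\mathbb T^d)})$ on $[0,T]$. In particular the regularity hypothesis \eqref{errest:cons} with $s=1$ holds on $(0,T]$ (in fact continuity persists up to $t=0$), and the semi-norm $|u|_{C(H^1)}$ on the right-hand side of \eqref{errestH1} is controlled by $\|u_0\|_{H^1(\mathbb T^d)}$.

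For the one-dimensional case I would then use the Sobolev embedding $H^1(\mathbb T)\hookrightarrow L^\infty(\mathbb T)$, valid since $s=1>d/2=\tfrac12$, to conclude $\|u\|_{C(L^\infty)}\le C\|u\|_{C(H^1)}<\infty$. Hence the quantity $\mathcal C(u_0,u)=\ln(\|u_0\|+1)+\ln(\|u\|_{C(L^\infty)}+1)$ is finite and bounded in terms of $|\lambda|,T$ and $\|u_0\|_{H^1(\mathbb T)}$ alone. All hypotheses of \reft{THM:errestH1} being met, the estimate \eqref{errestH1} applies directly.

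It remains to collapse the prefactor into a single constant. Since $\mathcal C(u_0,u)$ is bounded and $\ln N\ge \ln 2>0$ for $N\ge 2$, one has $\mathcal C(u_0,u)+\ln N\le \widehat C\,\ln N$; absorbing this together with $\mathrm e^{2|\lambda|T}$, the generic constant $C$, and the bound on $|u|_{C(H^1)}$ yields \eqref{main1D-H1} with $\widehat{\mathcal C}=\widehat{\mathcal C}(|\lambda|,T,\|u_0\|_{H^1(\mathbb T)})$, exactly mirroring the proof of Corollary~\ref{1DHs-est}. For $d=2,3$ the embedding $H^1\hookrightarrow L^\infty$ fails, so $\|u\|_{C(L^\infty)}$ need not be finite; the closest thing to an obstacle here is precisely the finiteness of this logarithmic constant, which is restored by imposing the extra hypothesis $u\in C((0,T];L^\infty(\mathbb T^d))$, after which the same chain of inequalities goes through verbatim.
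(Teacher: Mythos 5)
Your proposal is correct and follows essentially the same route as the paper, which deduces the corollary from \reft{THM:errestH1} together with the $H^1$ well-posedness of \cite[Theorem 2.3]{carles_low_2023} and the embedding $H^1(\mathbb T)\hookrightarrow L^\infty(\mathbb T)$, exactly as in the proof of Corollary \ref{1DHs-est}. The handling of $d=2,3$ via the additional hypothesis $u\in C((0,T];L^\infty(\mathbb T^d))$ also matches the paper's reasoning.
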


\section{Numerical results}\label{sec:numerresults}
In this section, we provide ample numerical results to validate the convergence of 
the LTSFS scheme \eqref{FourierFullS} 
for solving the LogSE \eqref{eq:LogSE} with low regularity initial data. 
  Given the nature of singularity, we find it is without loss of generality to  test on the method  in one spatial   dimension.



\subsection{Accuracy test on exact Gausson solutions} 
It is known that the LogSE has exact soliton-like Gausson solutions  (see  \cite{Bialynicki1979Gaussons} and \cite[(1.7)]{Bao2019Error}): 
\begin{equation}\label{ex-uxt}
  u(x, t) = b \,\exp\big\{\ri \big(x\cdot \zeta - (a+|\zeta|^2)t \big) + (\lambda/2)|x-2\zeta t|^2 \big\},\quad  x \in \mathbb R^d,\;  t \ge 0,
\end{equation}
where $a = -\lambda (d-\ln|b|^2)$, and $b, \lambda \in \mathbb R \setminus \{0\}$, $\zeta \in \mathbb R^d$ are free  to choose.
Then we take the initial input to be 
\begin{equation}\label{ex:oneu0}
    u_0(x)=u(x,0) = b \,\exp\big(\ri x\cdot \zeta   + (\lambda/2)|x|^2 \big),\quad  x \in \mathbb R^d.
\end{equation}

 In the following tests, we take  $d = 1,\lambda = -16$, $b =1, \zeta=0$, and set the interval of computation to be $[-\pi,\pi)$ so that $|u_0(x)|_{x= \pm\pi}\approx 5.12\times 10^{-35}$ to enforce the periodic conditions.  
 To demonstrate the convergence order in time, we use $N=200$ Fourier modes so that the error is dominated in time. 
 In Figure \ref{fig:exacttimeorder}, we plot on the left the $L^2$-errors at $t=0.4, 0.7, 1$ against the time-stepping sizes of $\tau$  from $10^{-4}$ to $10^{-2}$, and depict on the right time evolution of the mass. 
We observe from Figure \ref{fig:exacttimeorder} a perfect first-order convergence and a good conservation of the initial mass.  Note that  the solution \eqref{ex-uxt} is sufficiently smooth, and 
$$|u(x,t)|=|b| e^{\lambda|x-2\zeta t|^2/2}>0,\quad x\in {\mathbb R^d},\; t\ge 0,$$
which implies $f(u)=u\ln |u|^2$ and $\Phi_B^t[u]= u\,\re^{-\ri \lam  t \ln |u|^{2}}$ are smooth.  On the other hand,  we infer from  \reft{PhiA:Projerr} that the splitting scheme is first-order under $H^2$-regularity.  We believe the first-order convergence is attributed to these two facts, though the rigorous justification appears subtle.  In addition, under the $H^2$-regularity, 
 Bao et al \cite[Remark 2]{Bao2019Regularized} showed 
 the convergence  order
 ${\mathcal O}(\tau |\ln \varepsilon|/\varepsilon)$  of the time-splitting scheme  
for the regularised LogSE. Then taking $\varepsilon=\tau^\delta$ leads to 
the convergence  order
 ${\mathcal O}(\tau^{1-\delta} |\ln \tau|)$ for any $0<\delta<1.$ However, a similar  estimate for the non-regularised case is not available which appears open.  

 

\begin{figure}[!th]
    \centering
    \hspace{-0.2in}
    \includegraphics[width=0.42\textwidth,height=0.36\textwidth]{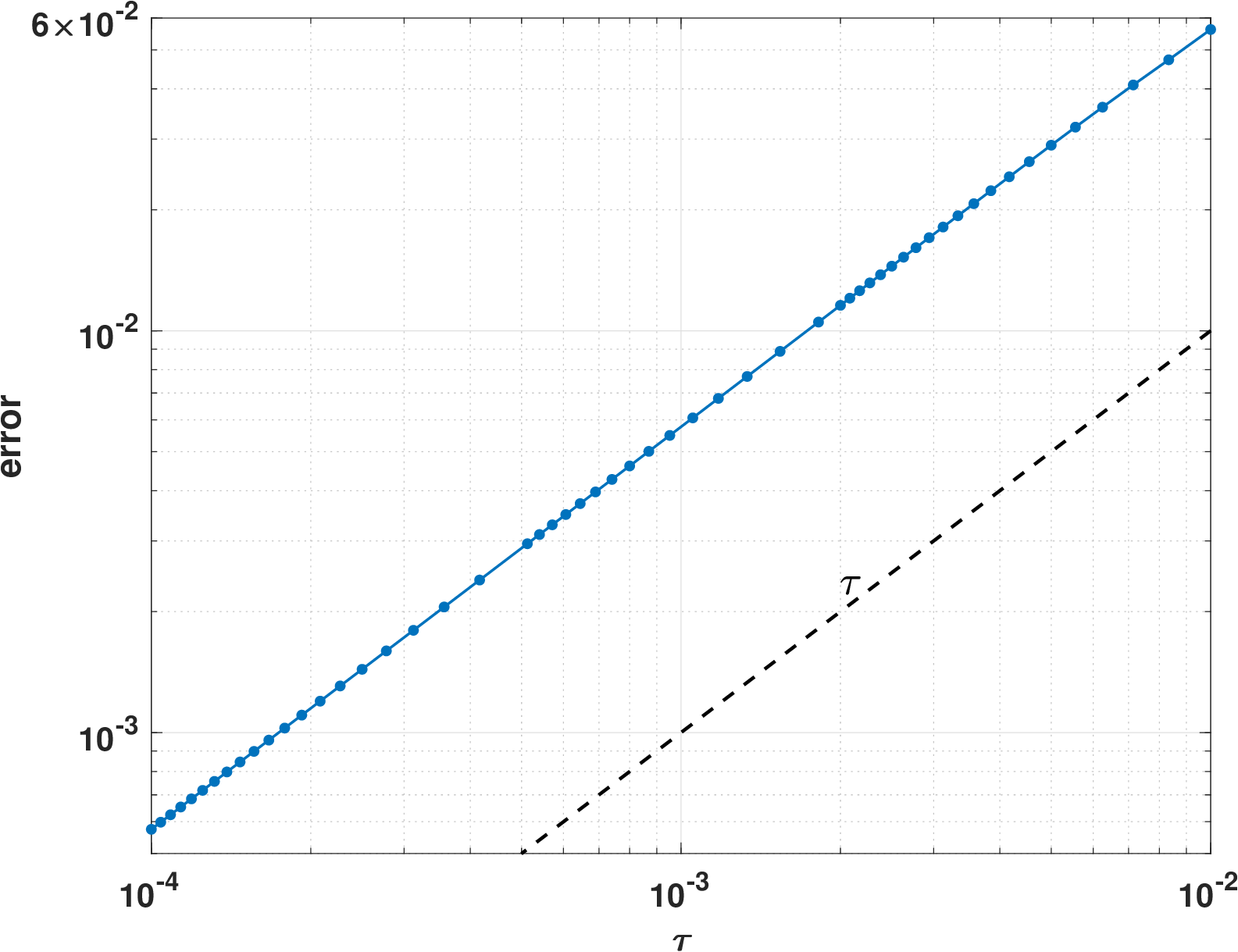}~~\quad \qquad
    \includegraphics[width=0.42\textwidth,height=0.36\textwidth]{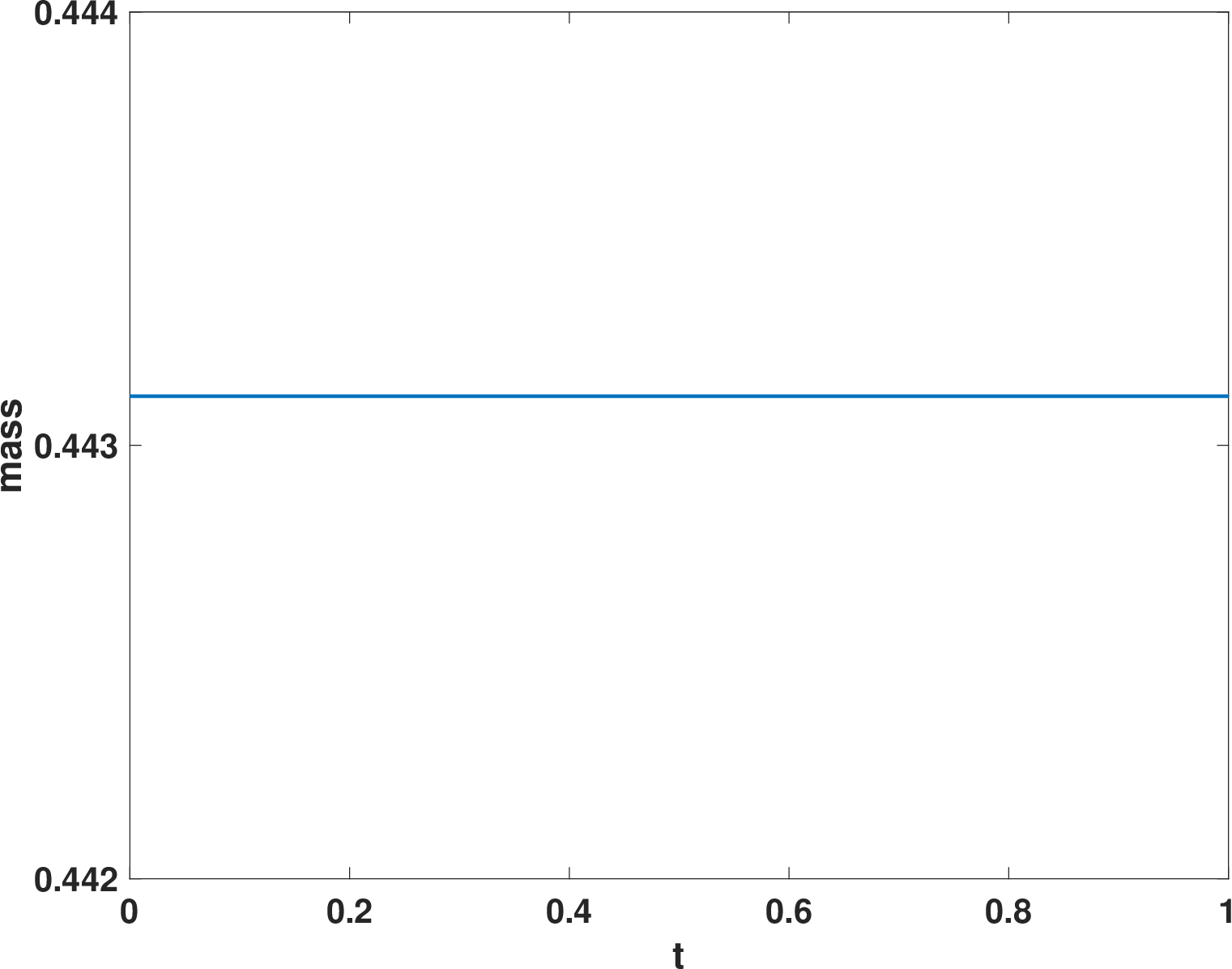} 
    \caption{Left: Convergence of the scheme \eqref{FourierFullS} in time with the initial value given in \eqref{ex:oneu0}. Left:  $L^2$-error against $\tau$ in the log-log scale. Right:  Evolution of mass  for $t\in [0,1]$. }
    \label{fig:exacttimeorder}
\end{figure}

\subsection{Low regularity initial data $u_0\in H^s(\mathbb T)$ for some $s\in (1/2, 1)$}\label{subsect:numHs} To verify the fractional-order convergence behaviours, we consider the following  two examples.    
\medskip 

\noindent{\bf Example 1: $u_0\in H^s(\mathbb T)$ generated by  decaying random  Fourier coefficients.}\, 
We construct $u_0(x)$ through properly decaying Fourier expansion coefficients $\{\hat u_{0,k}\}$ given by 
\begin{equation}\label{ex2:u01A}  
\hat u_{0,0}=0,\quad 
\hat u_{0,k}= \frac{a_k}{|k|^{s+\beta}}, \quad |k|\ge 1,
\end{equation}
where  $a_k\in {\mathbb C}$ with real and imaginary parts being  uniformly distributed random numbers on $[-1,1].$ We find from  the definition \eqref{FSnormsemi}
that if $\beta>1/2,$ then $u_0\in H^s(\mathbb{T}),$ since   
\begin{equation*}\label{u0Hs-ver}
\begin{aligned}
 \|u_0 \|^2_{H^s(\mathbb{T})} &= \sum_{|k|=0}^\infty (|k|^2+1)^s |\hat u_{0,k}|^2= \sum_{|k|=1}^\infty  |a_k|^2 \frac{(|k|^2+1)^s}{|k|^{2s+2\beta}}
 \le 2^{s+1} \sum_{|k|=1}^\infty \frac{1}{|k|^{2\beta}} 
 \\ & \le 2^{ s+1} \int_1^\infty \frac{1}{x^{2\beta}} \rd x 
 =\frac{ 2^{ s+1}}{2\beta -1}<\infty.  
\end{aligned}
\end{equation*}
In real implementation,  we truncate the infinite series with a cut-off number 
$K=10^6$ and randomly generate 
$$
\Re (a_k)=2*\text{\tt rand}(K,1)-1,\quad \Im(a_k)=2*\text{\tt rand}(K,1)-1,
$$
where  ``{\tt rand}(\;)'' is the Matlab routine  to produce 
a random vector drawn from the uniform distribution in the interval $[0,1].$

In this case, the exact solution is unavailable, so  we use the LTSFS scheme 
with 
\begin{equation}\label{ref-solu-tau}
\tau=2^{-18}\approx 3.8\times 10^{-6}, \quad N=[1/\sqrt{\tau}],
\end{equation}
to compute a reference ``exact'' solution, denoted by $u_{\rm ref}(x,t)$ at $t=m\tau.$
In Figure \ref{fig:Hsu00} (a)-(b), we plot the real and imaginary parts of $u_0(x)$ and 
$u_{\rm ref}(x,t)$ at $t=0.4, 0.7,1,$ on the left and right, respectively, for 
$s=0.8$ and $\beta=0.51.$ Observe that  the initial value and the reference solutions at different time are apparently  continuous but not differentiable.



We now  examine the error 
\begin{equation}\label{Esm-tau}
{\mathbb E}_s^m(\tau):= \frac{\|u_{\rm ref}(\cdot, m\tau)-u_N^m(\cdot)\|}
{\|u_0\|_{H^s(\mathbb T)}} \le {\mathcal C}(|\lambda|,T,\|u_0\|_\infty)\; \tau^{\frac s 2}|\ln \tau|,
\end{equation}
with the error bound predicted by \reft{THM:errestfrac} and Corollary \ref{1DHs-est},
where we take $N=[1/\sqrt{\tau}]$ and the constant $\mathcal C$ depends   on $\ln (\|u_0\|_\infty+1).$ 
In Figure \ref{fig:Hsu00} (c), we plot the convergence in $\tau$ which roughly indicates the order ${\mathcal O}(\tau^{\frac s 2})$ as expected.  It also shows the scheme is stable as the errors do not increase with  time. We record the time evolution of  mass in Figure \ref{fig:Hsu00} (d), which shows a good preservation of this quantity. 

\begin{figure}[!th]
		\centering
		\subfigure[$\Re\{u_{\rm ref}(x,t)\}$]
{\includegraphics[width=0.40\textwidth,height=0.35\textwidth]{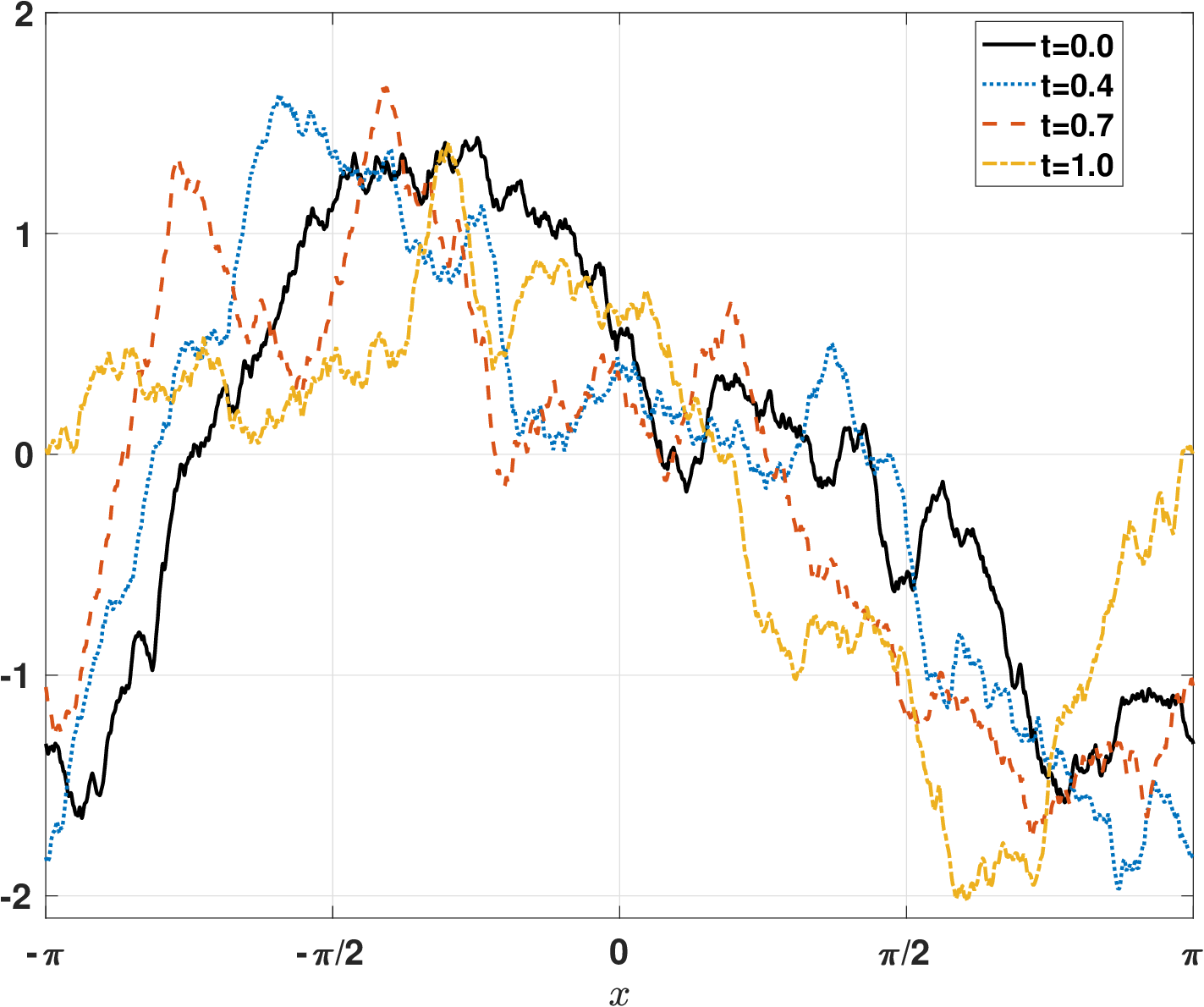}} \quad\qquad 
		\subfigure[$\Im\{u_{\rm ref}(x,t)\}$]
{\includegraphics[width=0.40\textwidth,height=0.35\textwidth]{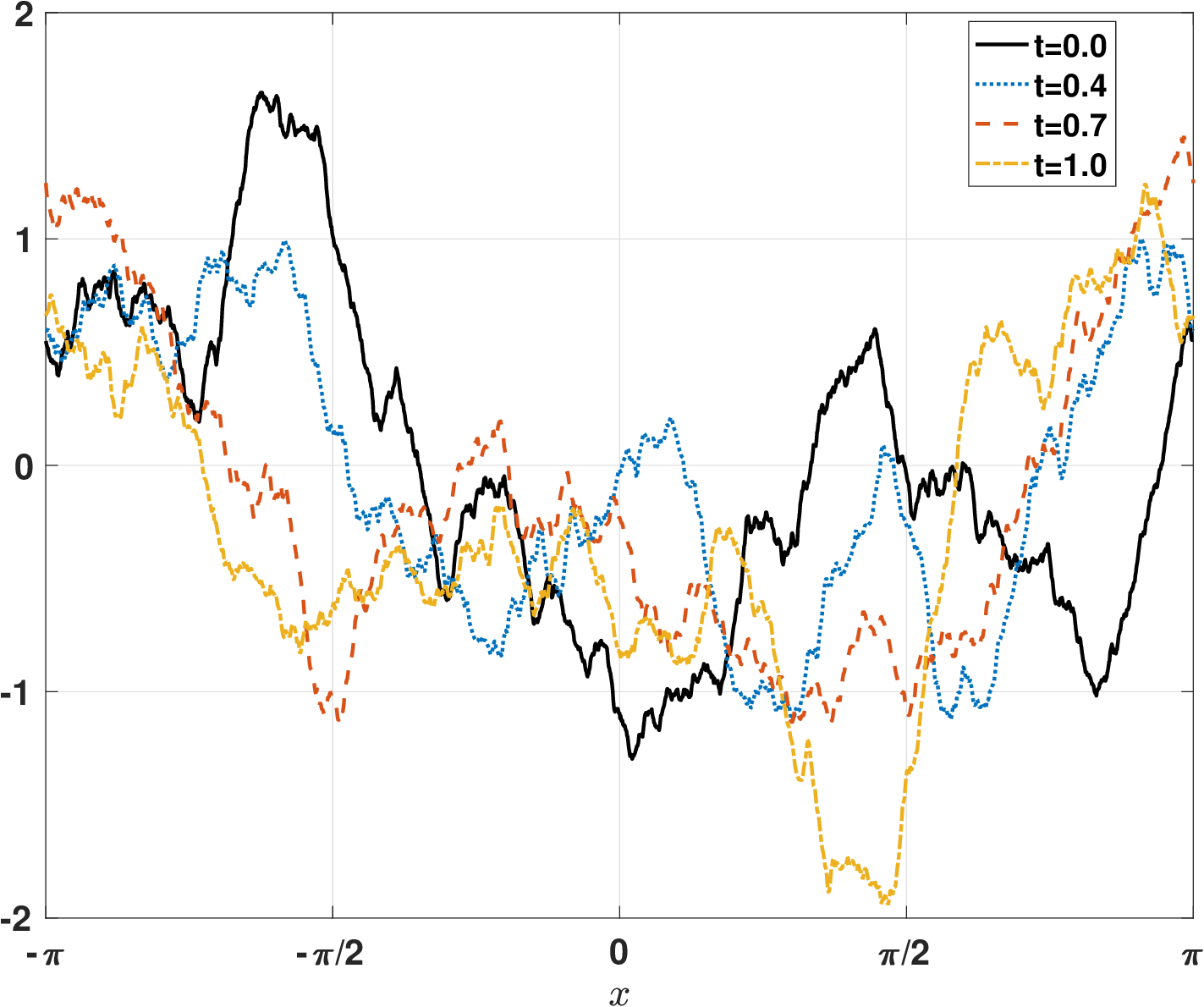}}
		\subfigure[Error ${\mathbb E}_s^m(\tau)$]
{\includegraphics[width=0.42\textwidth,height=0.36\textwidth]{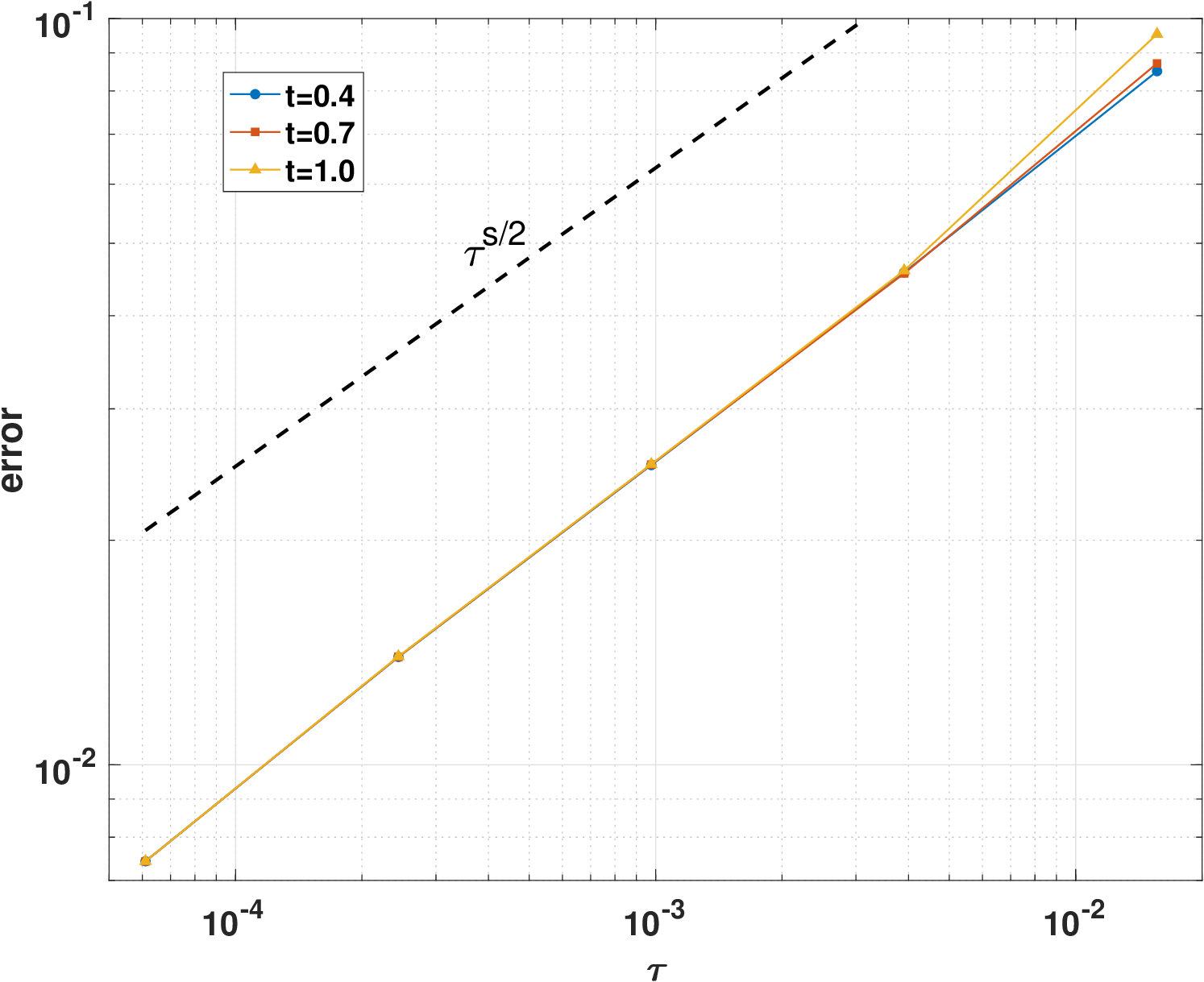}}\qquad 
		\subfigure[Evolution of mass]{
\includegraphics[width=0.42\textwidth,height=0.36\textwidth]{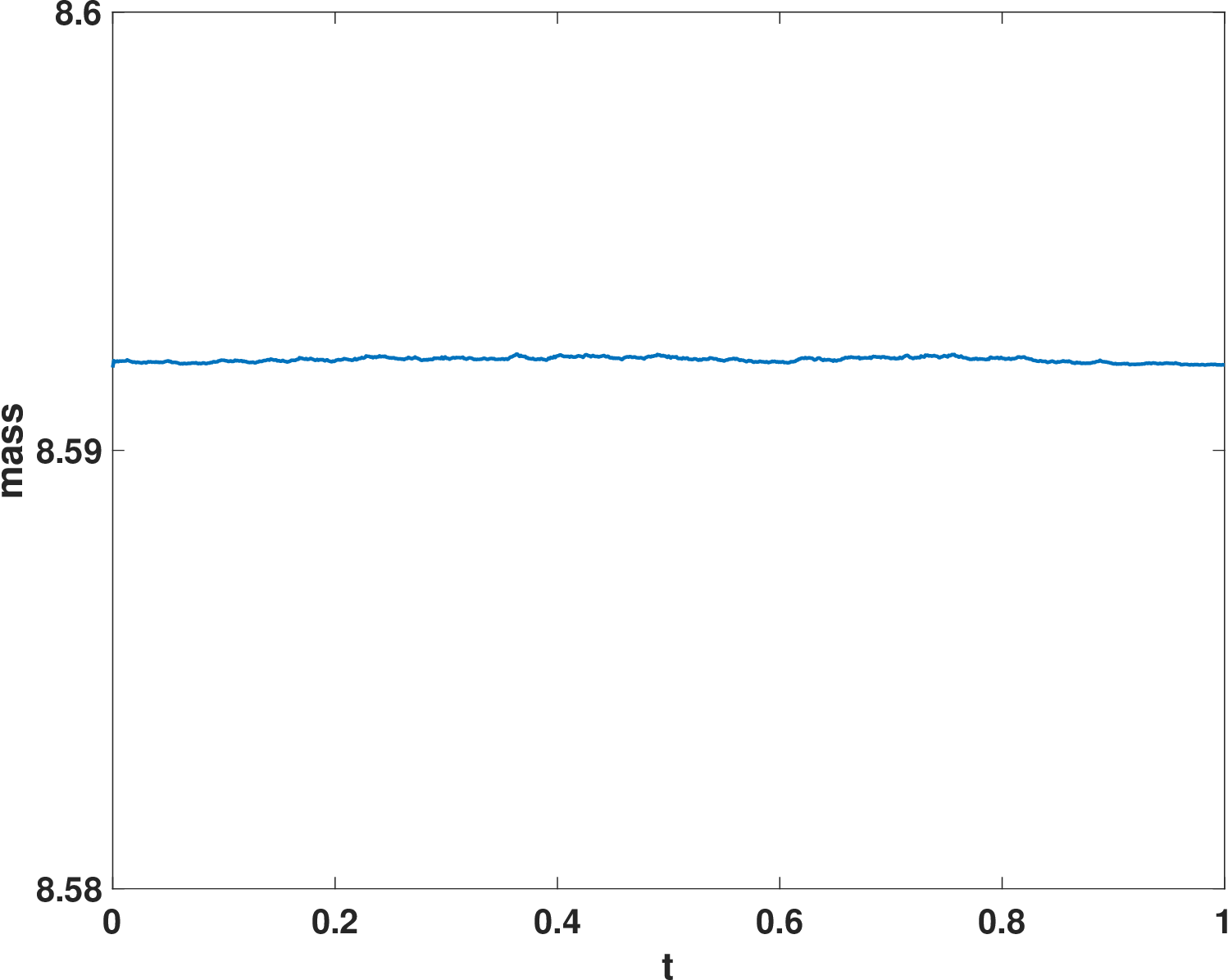}}
		\caption{Numerical results for {\bf Example 1}. (a)-(b):\, Graphs of
    $u_0(x)$ given in \eqref{ex2:u01A}  with  $s=0.8, \beta=0.51,$ 
    and the reference solution $u_{\rm ref}(x,t)$ at $t=0.4, 0.7,1,$ computed by the scheme with $\tau, N$ given  in \eqref{ref-solu-tau}. (c):\, Error ${\mathbb E}_s^m(\tau)$ against $\tau$  in log-log scale 
    for  $t=m\tau=0.4, 0.7, 1$. (d):\, Evolution of mass for $t\in [0,1].$}\label{fig:Hsu00}
\end{figure}



\medskip

\noindent{\bf Example 2: Singular initial  data $u_0\in H^s(\mathbb T).$}\,  To further validate  the fractional order convergence, we consider \eqref{eq:LogSE}  with the following singular initial data:
\begin{equation}\label{u0typeii}
    u_0(x)= |x|^{\gamma} \re^{\ri \ell x},\quad x\in \mathbb{T},  \quad \ell \in \mathbb{Z},
\end{equation}
for some $\gamma\in (0,1).$ From the Taylor expansion of $\re^{\ri \ell x}$, we know the leading singularity of $u_0(x)$ is $\phi(x):=|x|^\gamma.$ Note that $|x|^\gamma$ is  
$\gamma$-H\"older continuous in the sense that  
$\big| |x|^\gamma - |y|^\gamma \big| \le \big| |x| - |y| \big|^\gamma$ (see \cite[pp 3-4]{fiorenza_holder_2016}).
Moreover, we find from   the definition \eqref{defnGag} and direct calculation that 
\begin{equation}\label{num:ex2frac0}
\begin{aligned}
[\phi]^2_{W^{s,2}(\mathbb{T})}&= \int_{\mathbb{T}} \int_{\mathbb{T}} \frac{\big||x|^{\gamma}-|y|^{\gamma}\big|^2}{|x-y|^{1+2s}} \rd x \rd y   
\le  \int_{\mathbb{T}} \int_{\mathbb{T}} \frac{\big| |x|-|y| \big|^{2\gamma}}{|x-y|^{1+2s}} \rd x \rd y  \\
&\le \int_{\mathbb{T}} \int_{\mathbb{T}} \frac{|x-y|^{2\gamma}}{|x-y|^{1+2s}} \rd x \rd y
\;=2\iint_{\mathbb{T}^2\cap \{ x\ge y\} }  (x-y)^{2\gamma-2s -1} \rd x \rd y \\
&= \frac{ (2\pi)^{2\gamma -2s +1} }{(\gamma-s)(2\gamma -2s +1)},
\end{aligned}
\end{equation}
which is finite if $s<\gamma+1/2,$ and  implies  $|x|^{\gamma} \in W^{\gamma+1/2-\epsilon,2}(\mathbb{T})$ for sufficiently small $\epsilon>0.$ It is noteworthy that  Liu et al \cite{Liu19MC-Optimal} introduced  an optimal fractional Sobolev space characterised by the Riemann-Liouville fractional derivative and showed  the regularity index $s=\gamma+1/2$ without $\epsilon.$
In addition, it seems subtle to show $|x|^\gamma\in H^{\gamma+1/2}(\mathbb T)$ under the $2\pi$-periodic extension, though   
 this regularity can be testified to by the numerical evidences below.



Here, we  compute the reference solution $u_{\rm ref}(x)$  with  singular $u_0(x)$ given in \eqref{u0typeii} (where $\ell=2$) and 
 using the LTSFS with the discretisation parameters given in  \eqref{ref-solu-tau}. The curves of the reference solutions at different time $t$ are depicted  in Figure \ref{fig:Hsu0typeb} (a)-(b).  The error plot in  Figure \ref{fig:Hsu0typeb} (c) clearly indicates  an 
 $\mathcal{O}(\tau^{\frac s2})$ (with $s=\gamma+1/2$) convergence which  agrees well with the prediction in \reft{THM:errestfrac} and Corollary \ref{1DHs-est}.
The evolution of mass recorded in Figure \ref{fig:Hsu0typeb} (d)  shows a good  conservation of mass.

\begin{figure}[!th]
		\centering
		\subfigure[$\Re\{u_{\rm ref}(x,t)\}$]
{\includegraphics[width=0.40\textwidth,height=0.35\textwidth]{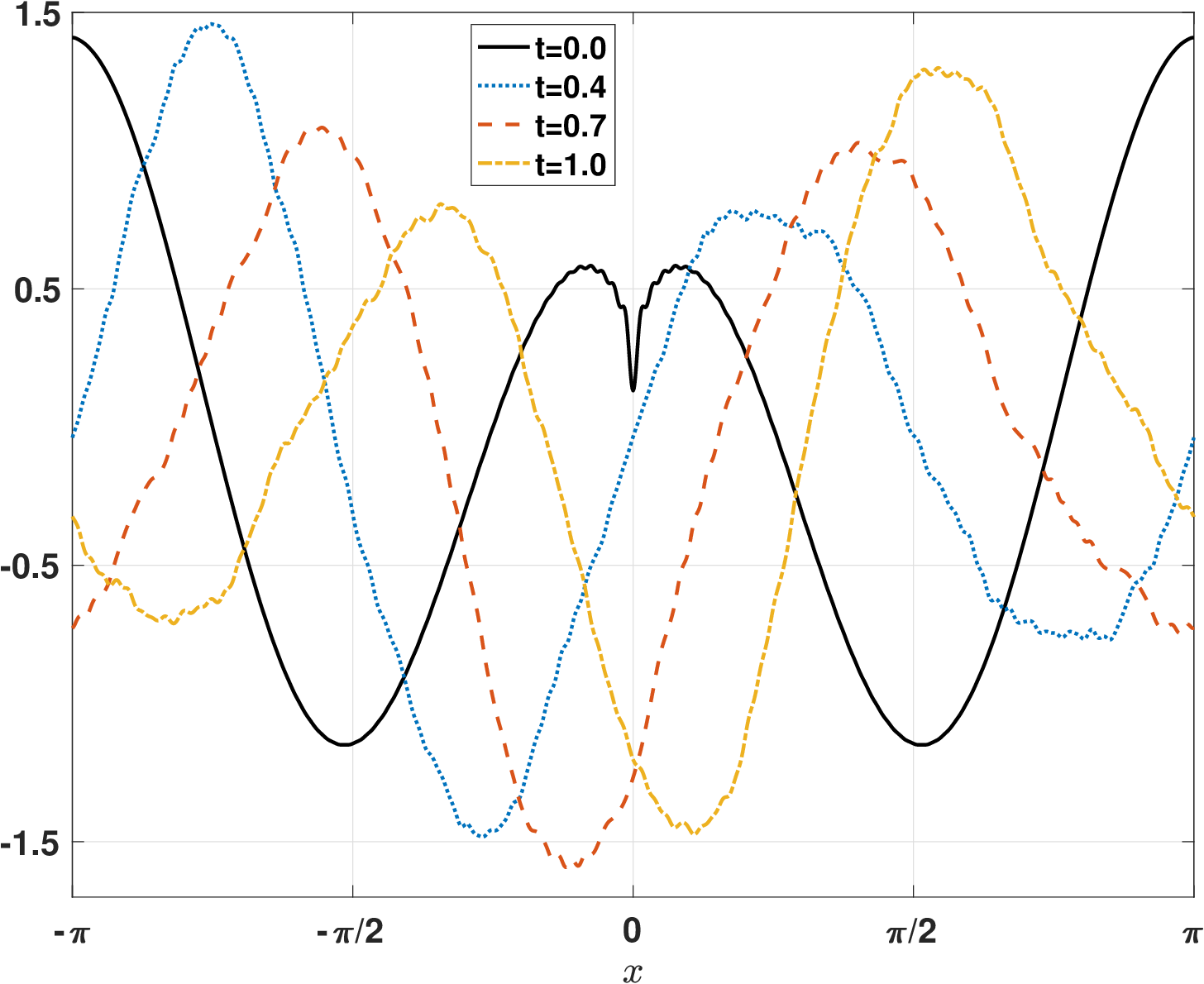}} \quad\qquad 
		\subfigure[$\Im\{u_{\rm ref}(x,t)\}$]
{\includegraphics[width=0.40\textwidth,height=0.35\textwidth]{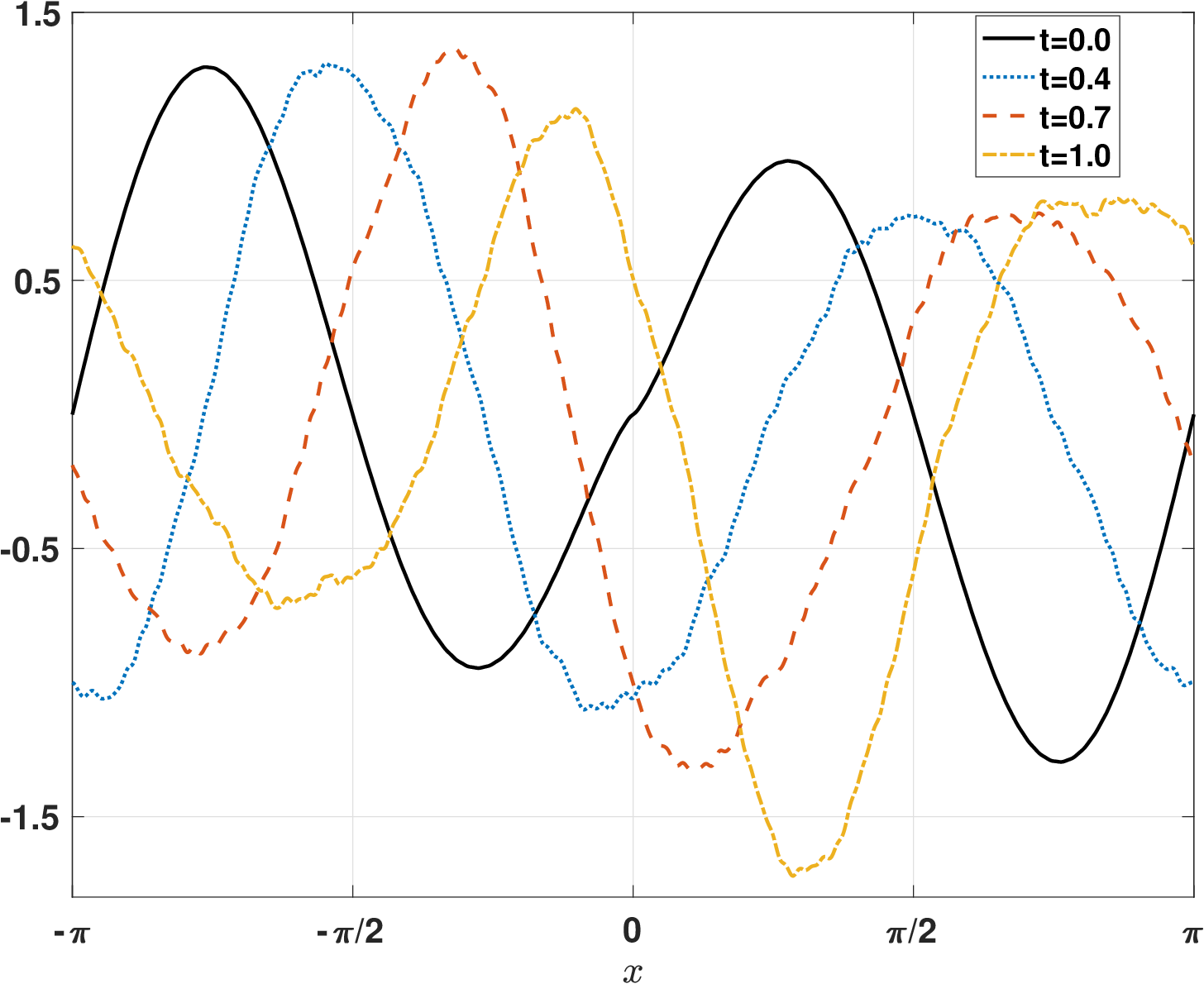}}
		\subfigure[Error ${\mathbb E}_s^m(\tau)$]
{\includegraphics[width=0.42\textwidth,height=0.36\textwidth]{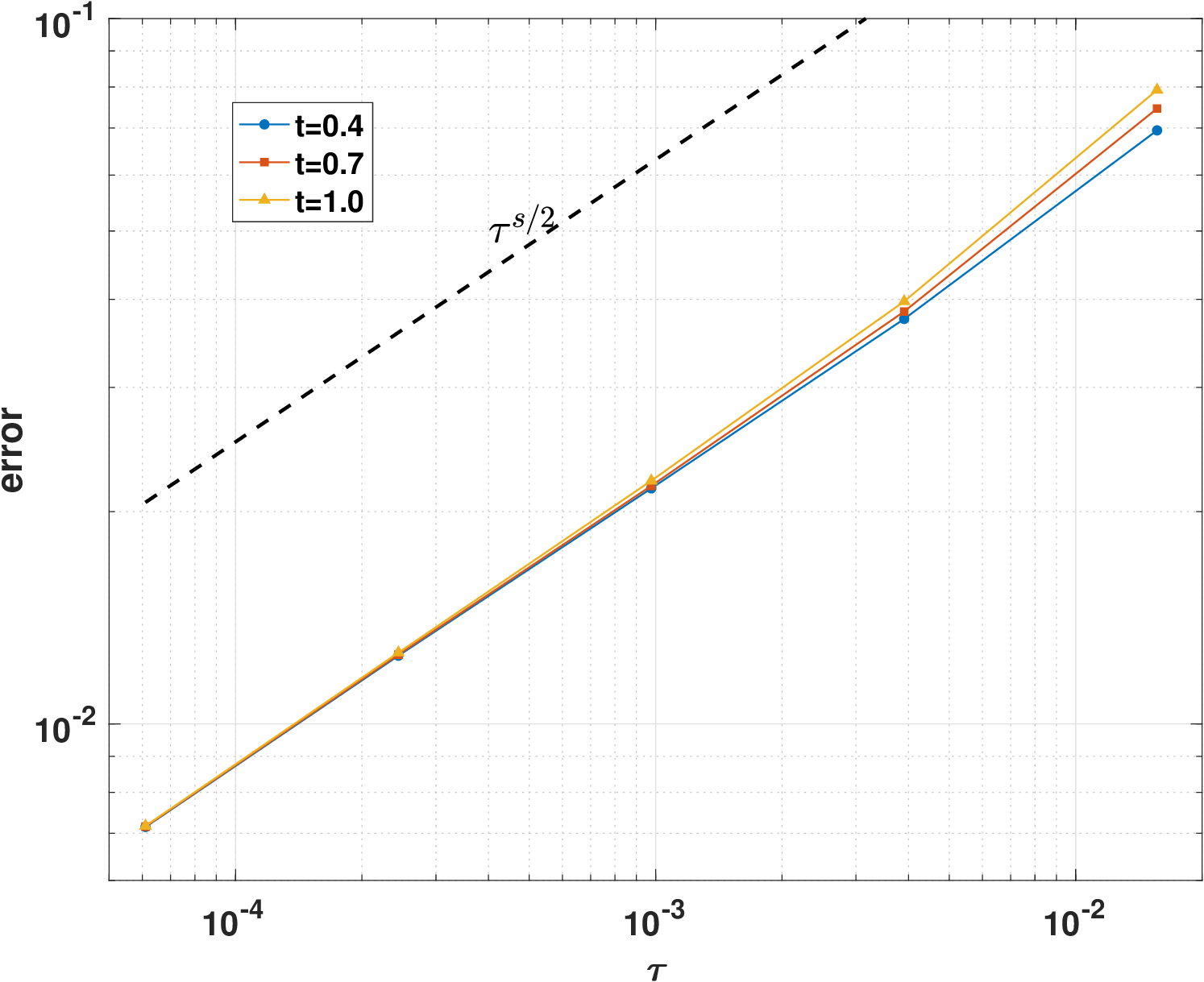}}\qquad 
		\subfigure[Evolution of mass]{
\includegraphics[width=0.42\textwidth,height=0.36\textwidth]{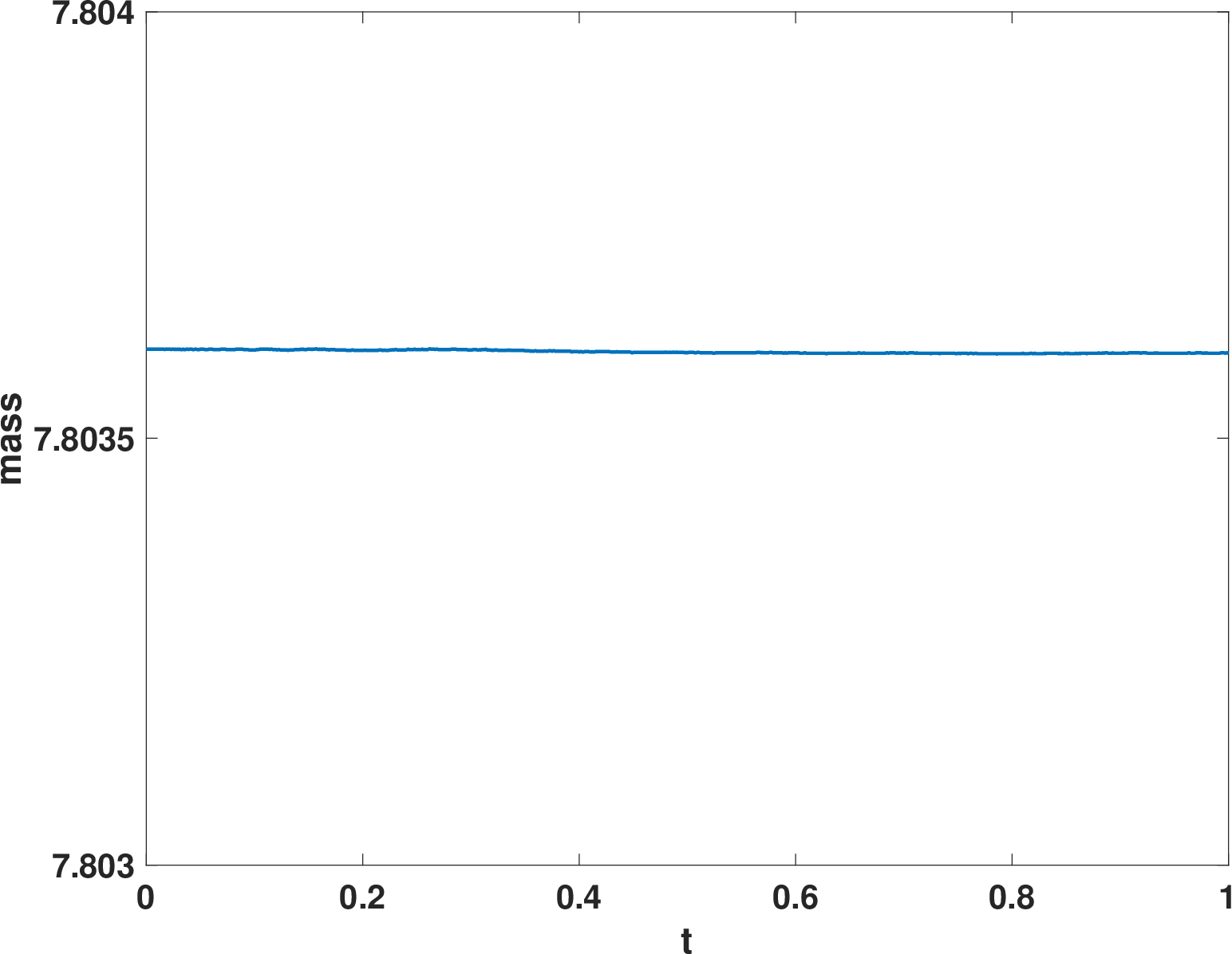}}
		\caption{Numerical results for {\bf Example 2}.  (a)-(b):\, Graphs of
    $u_0(x)$ given in \eqref{u0typeii}  with  $\gamma=0.3, \ell = 2,$ 
    and the reference solution $u_{\rm ref}(x,t)$ at $t=0.4, 0.7,1,$ computed by the scheme with $\tau, N$ given  in \eqref{ref-solu-tau}. (c):\, Error ${\mathbb E}_s^m(\tau)$ against $\tau$  in log-log scale 
    for  $t=m\tau=0.4, 0.7, 1$, and $s=0.8$. (d):\, Evolution of mass for $t\in [0,1].$}\label{fig:Hsu0typeb}
\end{figure}


\subsection{$H^1$-initial data} To demonstrate the convergence behavior  in  
\reft{THM:errestH1} and Corollary \ref{1DH1-est}, we take $u_0\in H^1(\mathbb T)$ by letting $s=1$ in {\bf Example 1} and  $\gamma=1/2$ in {\bf Example 2}. 

\smallskip
\noindent{\bf Example 3: $u_0\in H^1(\mathbb T)$ constructed by  {\bf Example 1} with $s=1$}.\,  We choose $u_0$  in \eqref{ex2:u01A} with $s=1, \beta=0.51$,  and follow the same setting as the previous subsection, but with $s=1.$ Correspondingly, the reference solutions are depicted in Figure \ref{fig:H1u0} (a)-(b), and the convergence and mass conservation are demonstrated  in Figure \ref{fig:H1u0} (c)-(d).  We reiterate that we observe a perfect agreement of convergence order ${\mathcal O}(\sqrt \tau)$ as in \reft{THM:errestH1} and Corollary \ref{1DH1-est}.
We remark that  Bao et al \cite{Bao2019Regularized} also constructed low-regularity $u_0(x)$ through  random coefficients with a specified decaying rate, where $H^1$-regularity was tested.
Here, our construction is more straightforward with a better description of the decay, so the curve of convergence has a much better fitting of the predicted order. 

\smallskip
\noindent{\bf Example 4: $u_0$ in  {\bf Example 2} with $\gamma=1/2$}.\,
Here we set $\gamma =1/2$ and $\ell=2$ in \eqref{u0typeii}. In Figure \ref{fig:H1u0typeb}, we show the curves of reference solution at $t=0.4,0.7,1$, and demonstrate convergence rate in time and conservation of mass. Again, we observe a good agreement with the prediction. 
\begin{figure}[!th]
		\centering
		\subfigure[$\Re\{u_{\rm ref}(x,t)\}$]
{\includegraphics[width=0.40\textwidth,height=0.35\textwidth]{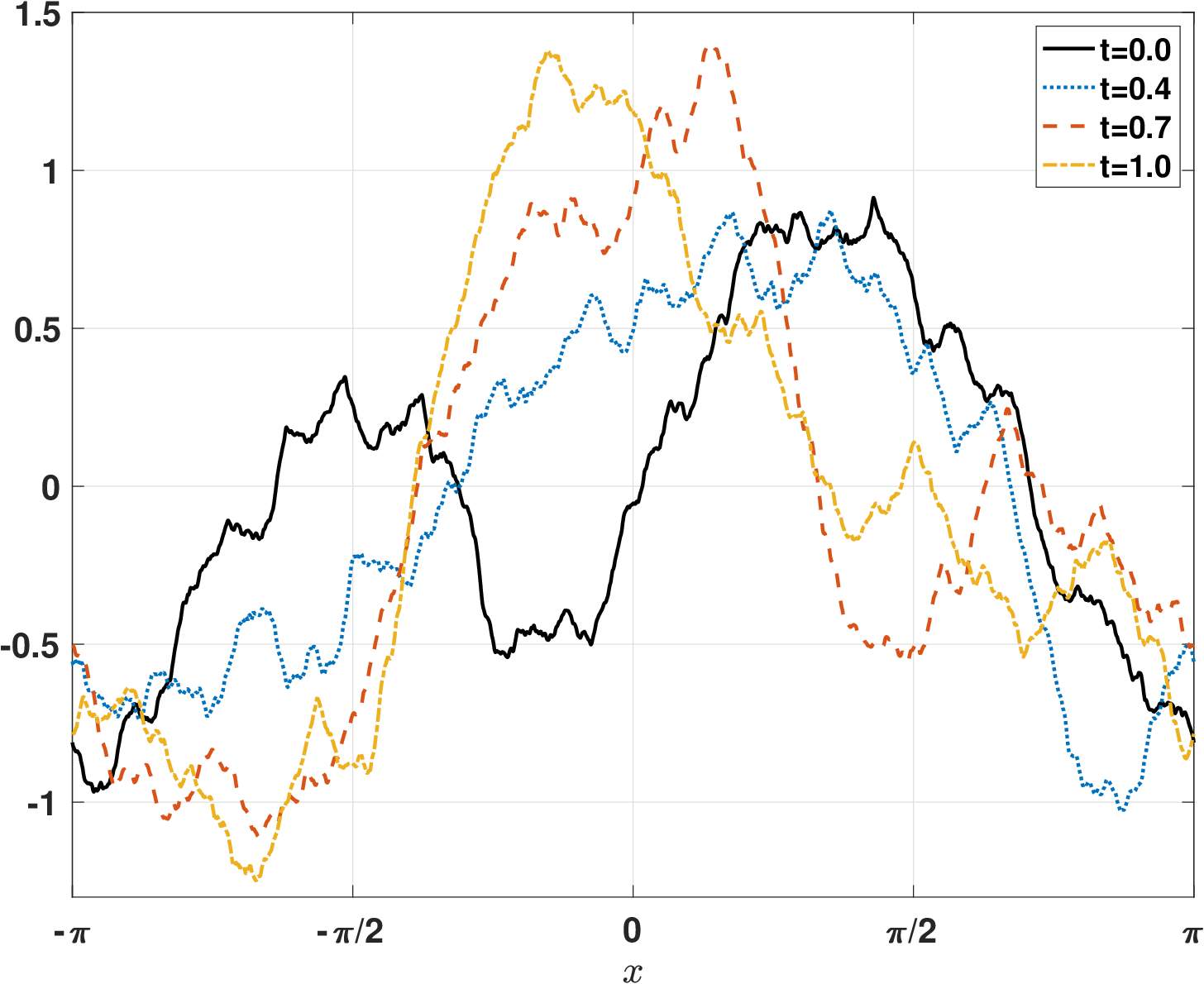}} \quad \qquad 
		\subfigure[$\Im\{u_{\rm ref}(x,t)\}$]
{\includegraphics[width=0.40\textwidth,height=0.35\textwidth]{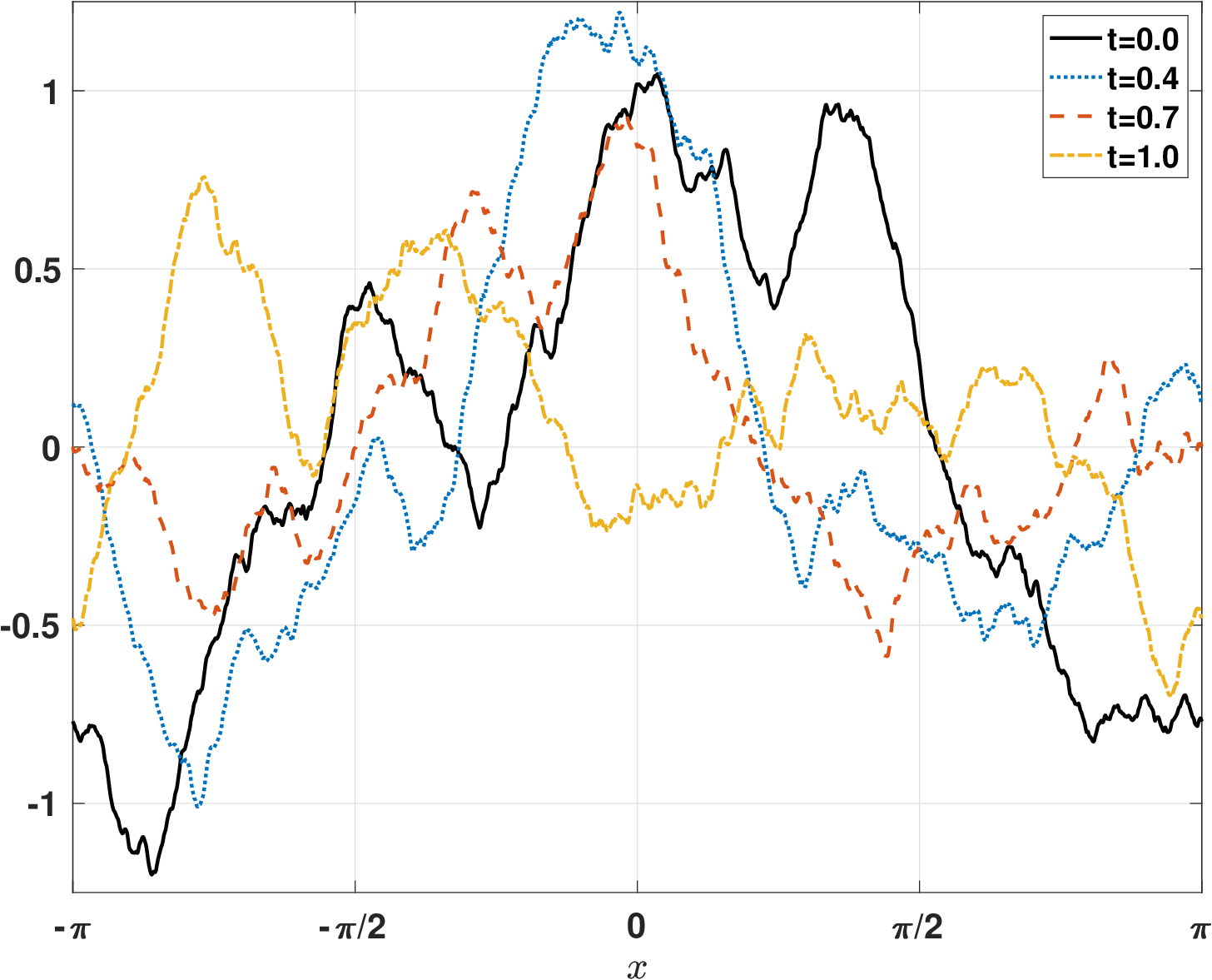}}
		\subfigure[Error ${\mathbb E}_s^m(\tau)$]
{\includegraphics[width=0.42\textwidth,height=0.36\textwidth]{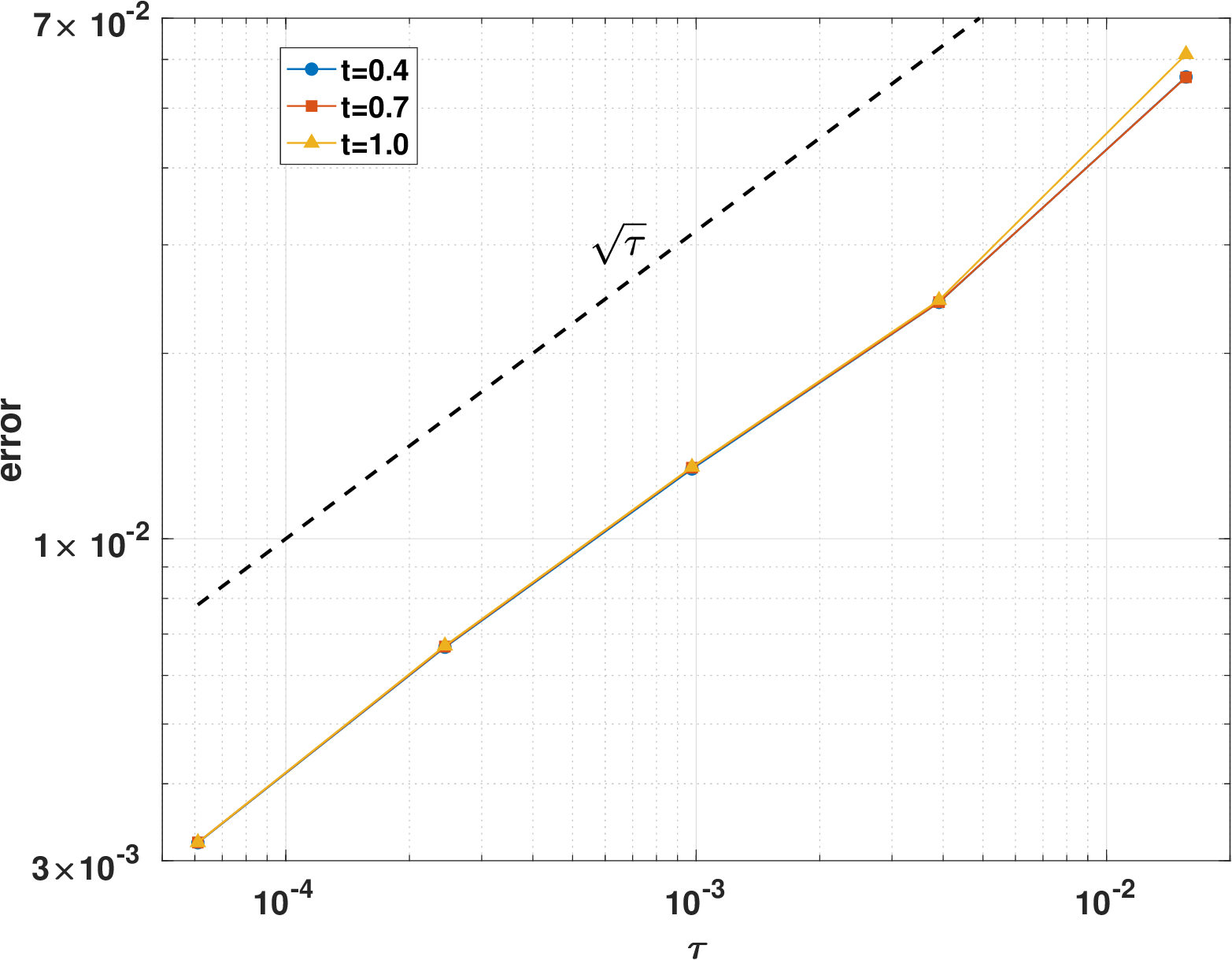}}\qquad 
		\subfigure[Evolution of mass]{
\includegraphics[width=0.42\textwidth,height=0.36\textwidth]{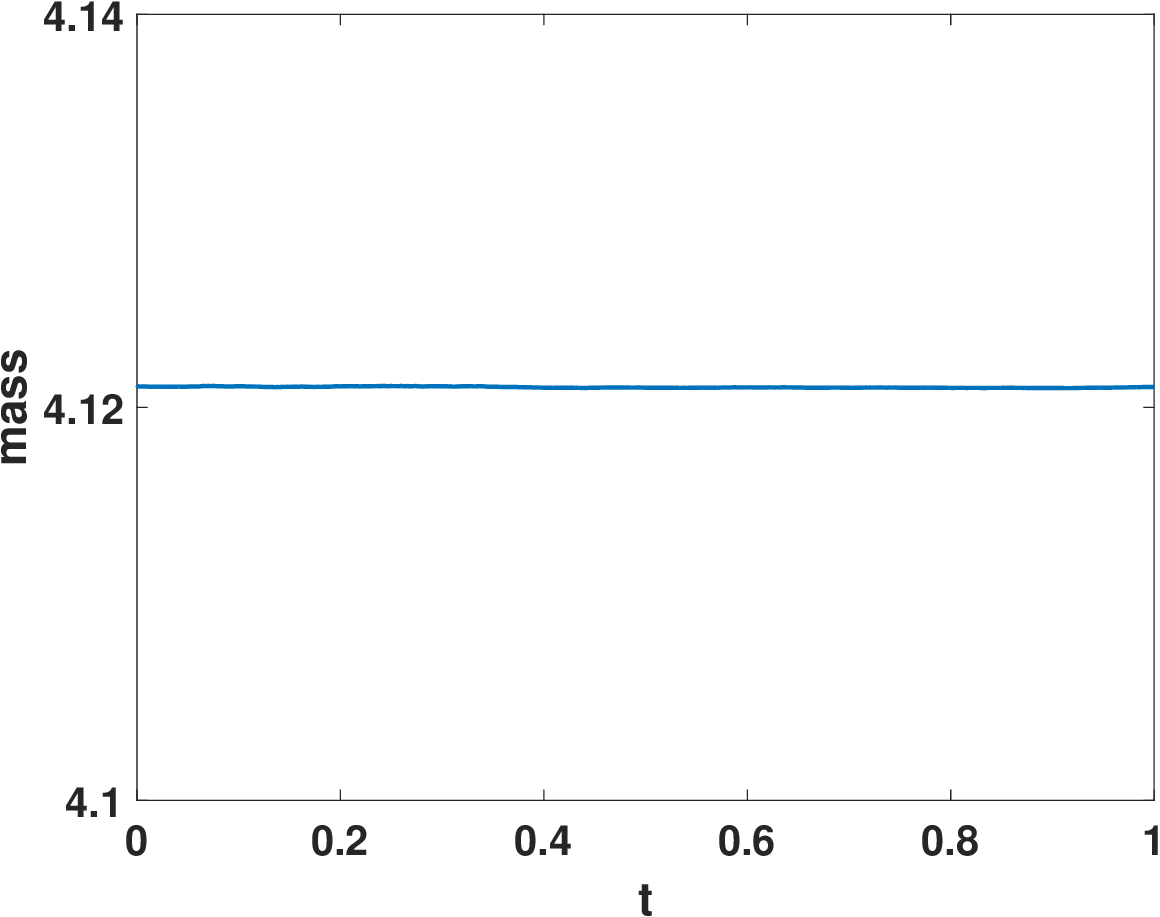}}
		\caption{Numerical results for {\bf Example 3}. (a)-(b):\, Graphs of
    $u_0(x)$ given in \eqref{ex2:u01A}  with  $s=1, \beta = 0.51,$ 
    and the reference solution $u_{\rm ref}(x,t)$ at $t=0.4, 0.7,1,$ computed by the scheme with $\tau, N$ given  in \eqref{ref-solu-tau}. (c):\, Error ${\mathbb E}_s^m(\tau)$ against $\tau$  in log-log scale 
    for  $t=m\tau=0.4, 0.7, 1$. (d):\, Evolution of mass for $t\in [0,1].$}\label{fig:H1u0}
\end{figure}



\begin{figure}[!th]
		\centering
		\subfigure[$\Re\{u_{\rm ref}(x,t)\}$]
{\includegraphics[width=0.40\textwidth,height=0.35\textwidth]{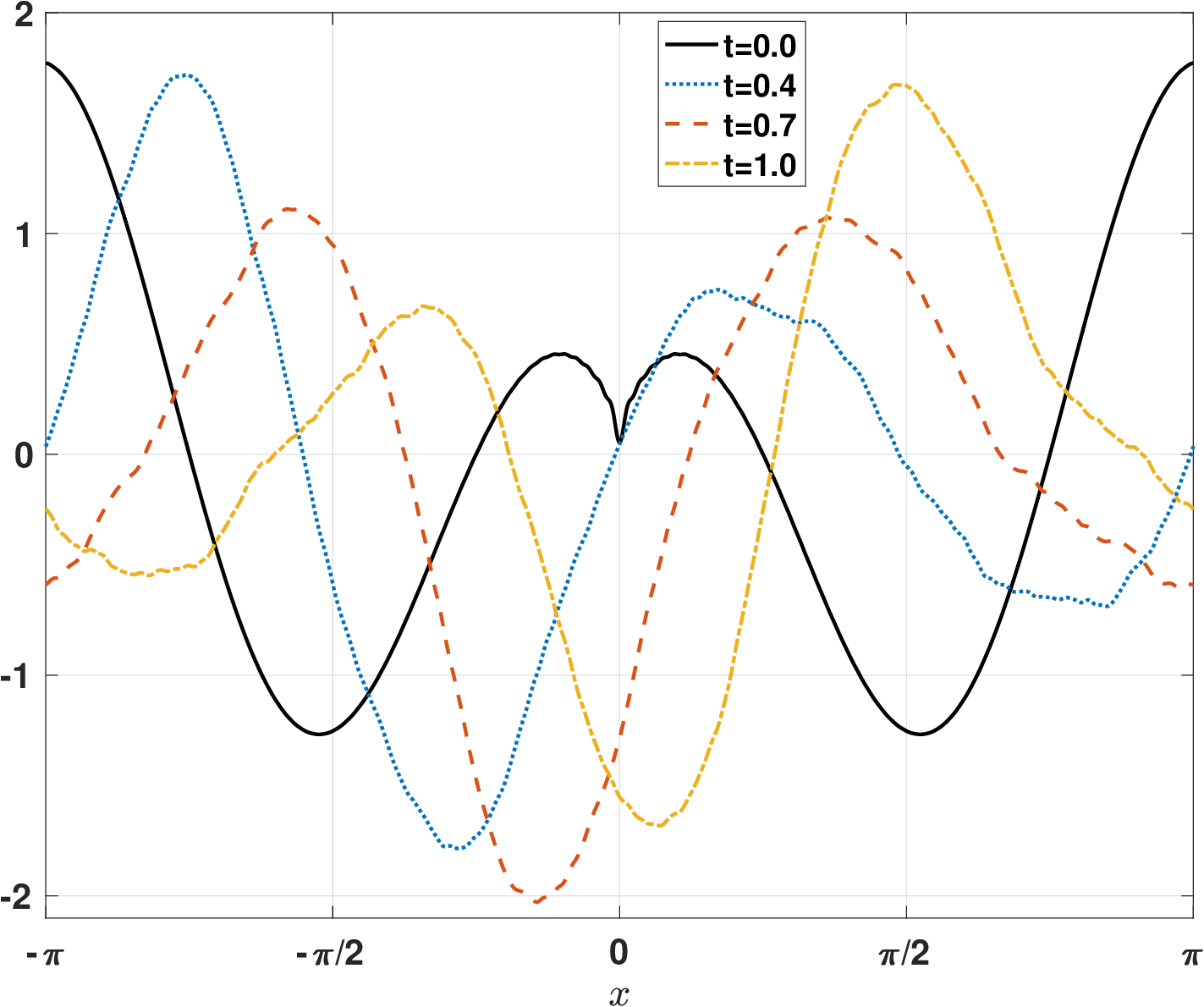}} \quad\qquad 
		\subfigure[$\Im\{u_{\rm ref}(x,t)\}$]
{\includegraphics[width=0.40\textwidth,height=0.35\textwidth]{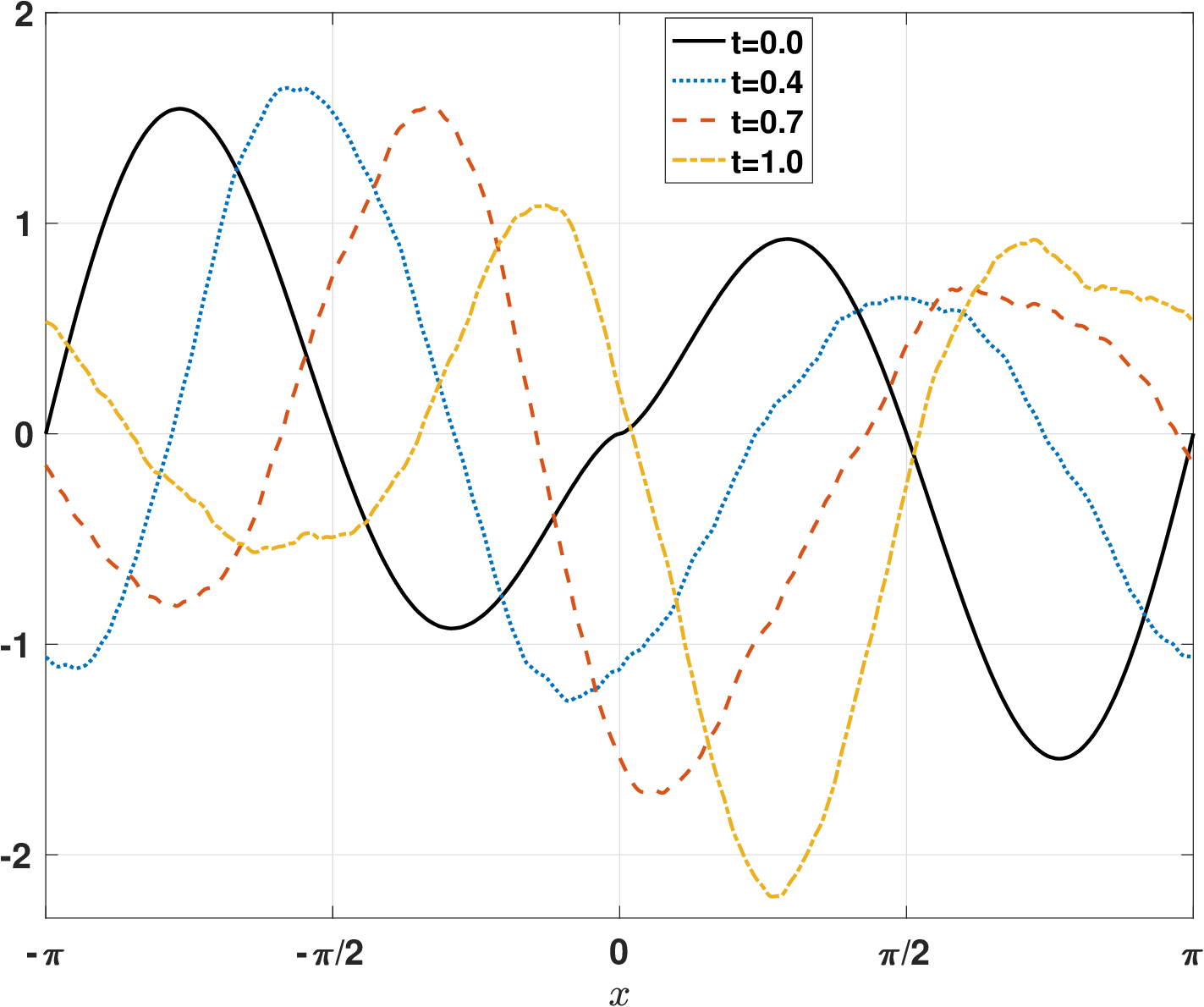}}
		\subfigure[Error ${\mathbb E}_s^m(\tau)$]
{\includegraphics[width=0.42\textwidth,height=0.36\textwidth]{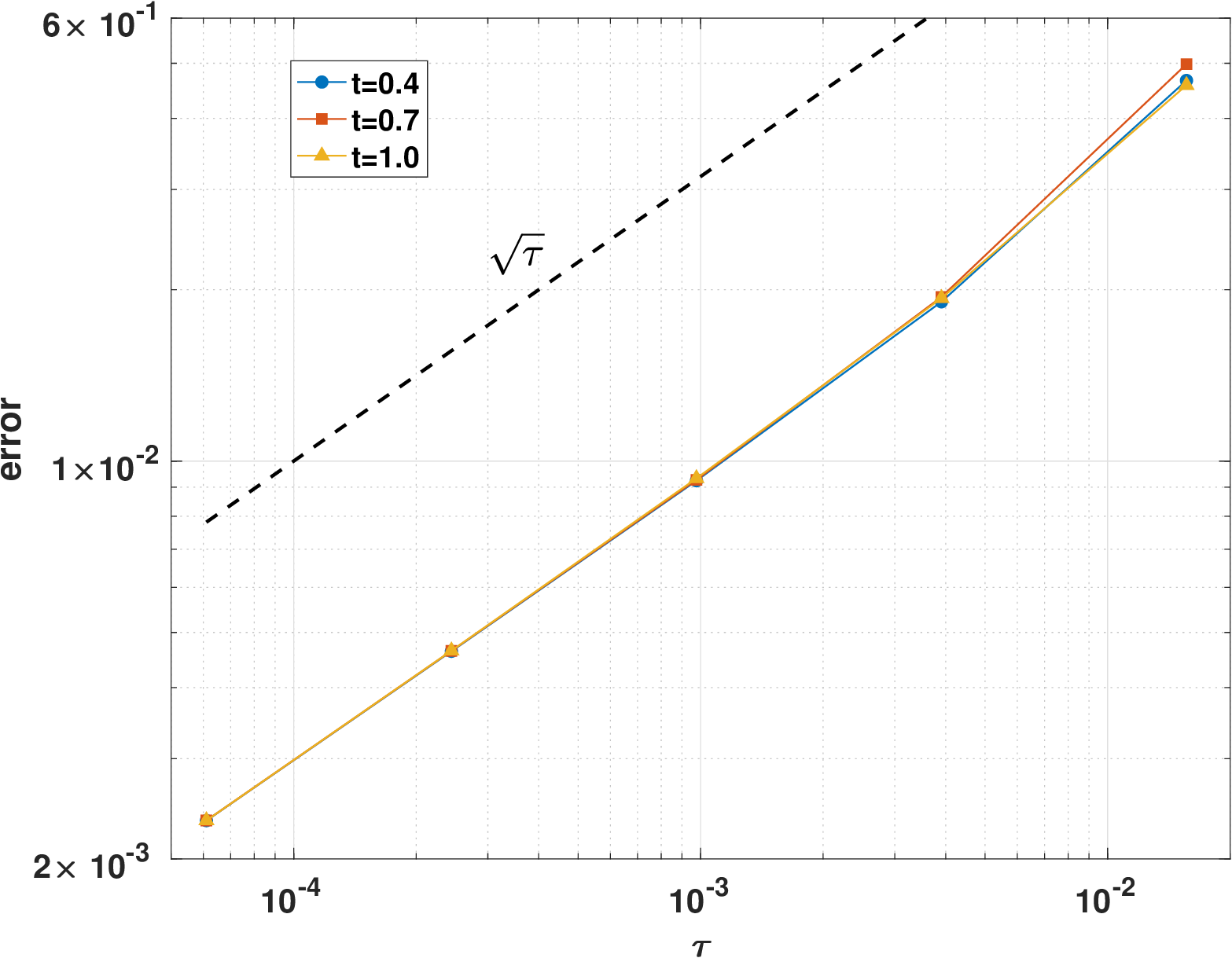}}\qquad 
		\subfigure[Evolution of mass]{
\includegraphics[width=0.42\textwidth,height=0.36\textwidth]{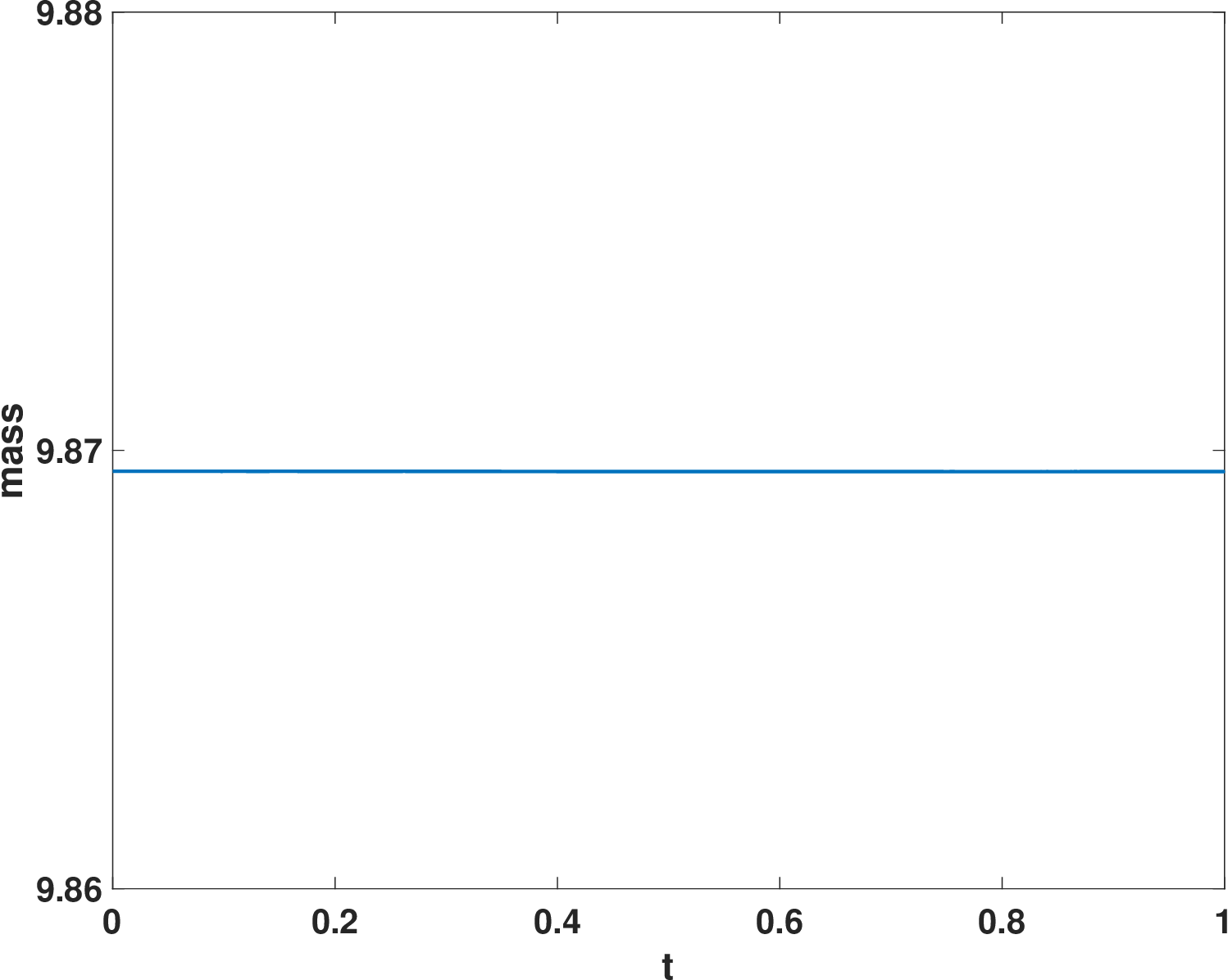}}
		\caption{Numerical results for {\bf Example 4}. (a)-(b):\, Graphs of
    $u_0(x)$ given in \eqref{u0typeii}  with  $\gamma=0.5, \ell = 2,$ 
    and the reference solution $u_{\rm ref}(x,t)$ at $t=0.4, 0.7,1,$ computed by the scheme with $\tau, N$ given  in \eqref{ref-solu-tau}. (c):\, Error ${\mathbb E}_s^m(\tau)$ against $\tau$  in log-log scale 
    for  $t=m\tau=0.4, 0.7, 1$. (d):\, Evolution of mass for $t\in [0,1].$}\label{fig:H1u0typeb}
\end{figure}

\section{Concluding remarks and discussions}\label{sect: Final}

 While we are finalizing this work, we realize  that  Carles et al \cite{carles_low_2023} showed the fractional Sobolev regularity of the LogSE on $\mathbb R^d$ and $\mathbb T^d,$ where the use of  fractional Sobolev norm in 
B$\acute{e}$nyi and Oh  \cite[Proposition 1.3]{BenyiOH2013fracSoblevTorus} (see \refl{lem:normEquiv}) was indispensable to the analysis therein.  Coincidentally, it is crucial in this context too. Here we also require the interplay between frequency and physical domain definitions, as seen in the proofs of the main results.   

There appears a marginal gap between the regularity  theory in  \cite{carles_low_2023,hayashi_cauchy_2023} and regularity requirements in  \reft{THM:errestfrac}  and \reft{THM:errestH1}, which additionally need $u\in C((0,T];L^\infty(\mathbb T^d))$  (i.e., $u$ is bounded,  though it is a sufficient condition). As a result, we could claim the convergence orders in Corollary \ref{1DHs-est}  and Corollary \ref{1DH1-est} for $d=1$  from the regularity results in \cite{carles_low_2023,hayashi_cauchy_2023} on the initial data $u_0$. However, if $u_0\in H^2(\mathbb T^d),$ then it is ensured as $u\in C((0,T];H^2(\mathbb T^d))$  (see e.g., \cite{Bao2019Error}).


 We reiterate that different from the existing works, we analyze for the first time the low regularity fractional order convergence for the non-regularised splitting scheme.  
 We also point out that  we observe from numerical evidences the first-order convergence  when $u_0\in H^2(\mathbb T),$ but the rigorous proof appears open.

\begin{appendix}
\setcounter{equation}{0}
\renewcommand{\theequation}{A.\arabic{equation}}
\section{Proof of \refl{lem:PiNapprox}}\label{AppendixA}

 For any $\phi\in X_N^d$, we write 
\begin{equation*}
\begin{aligned}
    \phi(x) &= \sum_{k\in \mathbb{K}^d_N } \hat \phi_k \re^{\ri k\cdot x} = \frac{1}{|\mathbb{T}|^d}\sum_{k\in \mathbb{K}^d_N } \Big(\int_{ \mathbb{T}^d } \phi(y) \re^{-\ri k \cdot y } \rd y\Big) \,\re^{\ri k\cdot x}\\
    & = \frac{1}{|\mathbb{T}|^d} \int_{\mathbb{T}^d} \phi(y) \sum_{k\in \mathbb{K}^d_N } \re^{\ri k\cdot (x-y)}  \rd y = \frac{2^d}{|\mathbb{T}|^d} \int_{\mathbb{T}^d} \phi(y) \, {\mathbb D}_N(x-y)\, \rd y,
    \end{aligned}
\end{equation*}
where ${\mathbb D}_N(y)$ is the $d$-dimensional Dirichlet kernel
\begin{equation*}
    {\mathbb D}_N(y) := \frac{1}{2^d}\sum_{k\in \mathbb{K}^d_N } \re^{\ri k\cdot y} = \frac{1}{2^d} \prod_{j=1}^d  \sum_{|k_j|=0}^N \re^{\ri k_j y_j} = \prod_{j=1}^d \frac{\sin((N+1/2)y_j)}{2\sin(y_j/2)}.
\end{equation*}
Then we obtain from the Cauchy-Schwarz inequality that 
\begin{equation*}
\| \phi \|_{\infty} \le \frac{2^d}{|\mathbb{T}|^d} \| \phi \|\, \| {\mathbb D}_N \|.
\end{equation*}
By the orthogonality of $\{\re^{{\rm i}k_j x_j}\}$ on $\mathbb{T}$, we have 
\begin{equation*}
\begin{aligned}
     \| {\mathbb D}_N \|^2 & = \frac{1}{4^{d}}\int_{ \mathbb{T}^d } \sum_{k \in \mathbb{K}^d_N} \re^{\ri k \cdot x} \sum_{\ell \in \mathbb{K}^d_N} \re^{-\ri \ell  \cdot x} \rd x 
     = \frac{1}{4^d} \prod_{j=0}^d \sum_{|k_j|,|\ell_j|=
     0}^N \int_{\mathbb{T}} \re^{\ri (k_j-\ell_j)x_j} \rd x_j\\
     &= \frac{ |\mathbb{T}|^d }{4^d} (2N+1)^d.
\end{aligned}
\end{equation*}
Then we obtain \eqref{XNinftyto2} directly.

\setcounter{equation}{0}
\renewcommand{\theequation}{B.\arabic{equation}}
\section{Proof of \refl{lem:errProj}}\label{AppendixB}

By the Parsval's identity, we have
\begin{equation*}
\begin{split}
| u - \Pi_N^d u |_{H^{\mu}({\mathbb T^d}) }^2  & =  |\mathbb{T}|^d\sum_{n\in  \mathbb{Z}^d \backslash \mathbb{K}^d_N }  |n|^{2\mu}  |\hat{ u }_n|^2  \le |\mathbb{T}|^d \Big(\max_{n\in \mathbb{Z}^d \backslash \mathbb{K}^d_N} \!\! {|n|^{2(\mu-s)}}\Big) \sum_{n\in \mathbb{Z}^d \backslash \mathbb{K}^d_N} |n|^{2s} |\hat{ u }_n|^2 \\
& \le N^{2(\mu-s)} | u |^2_{H^{s}(\mathbb T^d)}.  
\end{split}
\end{equation*}
The second  estimate  can be obtained straightforwardly since 
\begin{equation*}
    |\Pi_N^d u |^2_{H^{s} ( \mathbb{T}^d ) } = |\mathbb{T}|^d \sum_{k\in \mathbb{K}_N^d } |k|^{2s} |\hat u_k|^2 \le |\mathbb{T}|^d\sum_{k\in \mathbb{Z}_0^d } |k|^{2s} |\hat u_k|^2 =  | u |^2_{H^{s} ( \mathbb{T}^d ) }. 
\end{equation*}
This ends the proof.

\end{appendix}

\bibliography{References}
\end{document}